\newtheorem{thm}{Theorem}[section]
\newtheorem{Lemma}[thm]{Lemma}
\newtheorem{Proposition}[thm]{Proposition}
\newtheorem{Corollary}[thm]{Corollary}
\newtheorem*{thm*}{Theorem}
\theoremstyle{definition}
\newtheorem{Definition}[thm]{Definition}
\newtheorem{Remark}[thm]{Remark}
\newtheorem{Question}[thm]{Question}
\definecolor{wwwwww}{rgb}{0.4,0.4,0.4}
\DeclareMathOperator{\Cox}{Cox}
\DeclareMathOperator{\Pic}{Pic}
\DeclareMathOperator{\ch}{char}
\DeclareMathOperator{\mult}{mult}
\DeclareMathOperator{\Sing}{Sing}
\DeclareMathOperator{\rank}{rank}
\begin{document}

\title{Quartic and Quintic hypersurfaces with dense rational points}

\author[Alex Massarenti]{Alex Massarenti}
\address{\sc Alex Massarenti\\ Dipartimento di Matematica e Informatica, Universit\`a di Ferrara, Via Machiavelli 30, 44121 Ferrara, Italy}
\email{msslxa@unife.it}

\date{\today}
\subjclass[2020]{Primary 14E08, 14M20; Secondary 14G05.}
\keywords{Unirationality, rational points, hypersurfaces.}

\begin{abstract}
Let $X_4\subset\mathbb{P}^{n+1}$ be a quartic hypersurface of dimension $n\geq 4$ over an infinite field $k$. We show that if either $X_4$ contains a linear subspace $\Lambda$ of dimension $h\geq \max\{2,\dim(\Lambda\cap\Sing(X_4))-2\}$ or has double points along a linear subspace of dimension $h\geq 3$, a smooth $k$-rational point and is otherwise general, then $X_4$ is unirational over $k$. This improves previous results by A. Predonzan and J. Harris, B. Mazur, R. Pandharipande for quartics. We also provide a density result for the $k$-rational points of quartic $3$-folds with a double plane over a number field, and several unirationality results for quintic hypersurfaces over a $C_r$ field. 
\end{abstract}

\maketitle
\setcounter{tocdepth}{1}
\tableofcontents

\section{Introduction}
An $n$-dimensional variety $X$ over a field $k$ is rational if it is birational to $\mathbb{P}^n_{k}$, while $X$ is unirational if there is a dominant rational map $\mathbb{P}^n_{k}\dasharrow X$. If $k$ is infinite and $X$ is unirational then the set $X(k)$ of the $k$-rational points of $X$ is Zariski dense in $X$.

Since the first half of the twentieth century the problem of establishing whether a degree $d$ hypersurface $X_d\subset\mathbb{P}^{n+1}$ is rational or unirational has been central in birational projective geometry \cite{Mo40}, \cite{Pr49}, \cite{Mor52}, \cite{IM71}, \cite{CG72}, \cite{Cil80}, \cite{Ko95}, \cite{Sh95}, \cite{HMP98}, \cite{HT00}, \cite{dF13}, \cite{BRS19}, \cite{RS19}. 

Quadric hypersurfaces with a smooth point are rational and as proven by J . Koll\'ar cubic hypersurfaces with a smooth point are unirational \cite{Kol02}. U. Morin proved that a general complex hypersurface $X_d\subset\mathbb{P}^{n+1}$ is unirational provided that $n$ is large enough with respect to $d$ \cite{Mo40}. This result has then been reproved, in a different way, by C. Ciliberto \cite{Cil80}, and extended to complete intersections by A. Predonzan \cite{Pr49}, K. Paranjape, V. Srinivas \cite{PS92}, and L. Ramero \cite{Ram90}.

Furthermore, J. Harris, B. Mazur and R. Pandharipande proved that $X_d\subset\mathbb{P}^{n+1}$ is unirational if the codimension of its singular locus is sufficiently big with respect to $n$ and $d$ \cite{HMP98}.

Before stating our main results on the unirationality of quartics we briefly survey the state of the art. By the work of U. Morin a general complex quartic $X_4\subset\mathbb{P}^{n+1}$ with $n\geq 5$ is unirational \cite{Mor36}, \cite{Mor52}. V. A. Iskovskikh and Y. I. Manin proved that the group of the birational automorphisms of a smooth quartic $X_4\subset\mathbb{P}^4$ is finite so that $X_4$ is not rational \cite{IM71}. 

Moreover, J. Harris and Y. Tschinkel showed that if $n\geq 3$ and $k$ is a number field then for some finite extension $k'$ of $k$ the set of $k'$-rational points of a smooth quartic $X_4\subset\mathbb{P}^{n+1}$ is dense in the Zariski topology; in other terms the $k$-rational points of $X_4$ are potentially dense \cite{HT00}. Despite this great amount of efforts the unirationality of the general quartic $X_4\subset\mathbb{P}^n$ for $n = 4,5$ is still an open problem and only special families of quartic $3$-folds, called quartics with with separable asymptotics, are known to be unirational \cite{Seg60}. For a nice survey on rationality and unirationality problems with a focus on their relation with the notion of rational connection we refer to A. Verra's paper \cite{Ver08}. We recall that a projective variety is rationally connected if any two of its points can be joined by a rational curve, and refer to C. Araujo's paper \cite{Ar05} for a survey on the subject.

In this paper we address the unirationality of quartics $X_4\subset\mathbb{P}^{n+1}$ containing a linear subspace whose dimension is larger than that of the singular locus of $X_4$, and containing a linear subspace with multiplicity two. Our main results in Theorems \ref{Quart_th} and \ref{Quart_th2} can be summarized as follows:
\begin{thm}\label{teorA}
Let $X_4\subset\mathbb{P}^{n+1}$ be a quartic hypersurface and $\Lambda\subset\mathbb{P}^{n+1}$ an $h$-plane. Assume that either
\begin{itemize}
\item[(i)] $n\geq 3, h\geq 2$, $\dim(\Lambda\cap\Sing(X_4))\leq h-2$, $X_4$ contains $\Lambda$ and is not a cone over a smaller dimensional quartic; or
\item[(ii)] $n\geq 4, h\geq 3$, $X_4$ has double points along $\Lambda$, a point $p\in X_4\setminus\Lambda$ and is otherwise general; 
\end{itemize}
then $X_4$ is unirational.  
\end{thm}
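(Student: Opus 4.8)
The plan is to project $X_4$ away from $\Lambda$ so as to present it, birationally, as a fibration over a projective space whose fibres are hypersurfaces of degree strictly less than $4$, and then to invoke Koll\'ar's unirationality theorem for cubics \cite{Kol02} in case (i), and the rationality of quadrics with a smooth rational point in case (ii).

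First I would put $F$ in normal form: choosing coordinates with $\Lambda=\{x_{h+1}=\dots=x_{n+1}=0\}$ and writing $F=\sum_{i=0}^4 F_i$ where $F_i$ is homogeneous of degree $i$ in $x_{h+1},\dots,x_{n+1}$ and of degree $4-i$ in $x_0,\dots,x_h$, the containment $\Lambda\subset X_4$ forces $F_0=0$, and in case (ii) the condition that $X_4$ be double along $\Lambda$ forces moreover $F_1=0$ (the restrictions to $\Lambda$ of the partials $\partial F/\partial x_j$, $j>h$, are exactly the coefficients of $F_1$). Then I would consider the linear projection $\pi\colon X_4\dashrightarrow\mathbb{P}^{n-h}$ away from $\Lambda$: for general $b$ the span $\Pi_b=\langle\Lambda,b\rangle$ is an $(h+1)$-plane and $X_4\cap\Pi_b$ is a quartic $h$-fold containing the hyperplane $\Lambda$. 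In case (i) this splits as $\Lambda+C_b$ with $C_b$ a cubic $h$-fold, so after blowing up $\Lambda$ the variety $X_4$ becomes a fibration in cubic hypersurfaces $C\subset\mathbb{P}^{h+1}$ of dimension $h\ge2$ over $K:=k(\mathbb{P}^{n-h})$; in case (ii), since $F_1=0$, it splits as $2\Lambda+Q_b$ with $Q_b$ a quadric $h$-fold, so $X_4$ is birational to a fibration in quadric hypersurfaces of dimension $h\ge3$ over $K$. As $\mathbb{P}^{n-h}$ is $k$-rational it then suffices to show the generic fibre is unirational over $K$ in case (i), and $K$-rational in case (ii), and by \cite{Kol02} (resp.\ by the rationality of quadrics with a smooth rational point) this reduces to producing a \emph{smooth} $K$-point on the generic fibre, i.e.\ a rational section of the fibration meeting the smooth locus. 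The hypothesis that $X_4$ be not a cone is used to guarantee that $\pi$ is non-degenerate; if $X_4$ is a cone one reduces to a quartic in fewer variables.

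Constructing such a section is the heart of the matter and where I expect the real difficulty to lie, since $K$ is far from algebraically closed; this is the step in which the hypothesis $h\ge\max\{2,\dim(\Lambda\cap\Sing X_4)-2\}$ must be used. If $\Lambda$ meets $\Sing X_4$, every fibre passes through the \emph{fixed} subscheme $\Lambda\cap\Sing X_4$ and is in fact singular there; projecting the fibres away from a linear subspace of $\Lambda\cap\Sing X_4$ lowers the relative degree and can already make the generic fibre rational, and keeping dimensions balanced in this process is precisely what forces the bound $h\ge\dim(\Lambda\cap\Sing X_4)-2$. If instead $X_4$ is smooth along $\Lambda$, I would use the residual geometry: each fibre meets $\Lambda$ along the hypersurface cut out in $\Lambda\cong\mathbb{P}^h$ by the coefficient of the transverse coordinate in the restricted equation — a cubic in case (i), a quadric in case (ii) — and as $b$ varies these divisors sweep out a hypersurface in $\Lambda\times\mathbb{P}^{n-h}$ which, viewed over $\Lambda$, is $k$-rational (indeed a projective subbundle in case (i), where the dependence on $b$ is linear), and from a rational section of it I would extract a rational section of the original fibration lying on $\Lambda$. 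When $h$ lies above the Predonzan range \cite{Pr49} one may take this section constant, but for $h$ as small as $2$ a genuine construction is needed — plausibly an induction on $n$ in which $X_4$ is replaced by a general hyperplane section, again a quartic containing an $(h-1)$-plane of the same type, descending to a base case handled directly. In case (ii) the assumed smooth $k$-point $p\in X_4\setminus\Lambda$ furnishes a smooth $k$-point on one fibre, and I would use the genericity hypothesis to propagate it — via the discriminant of $Q_b$, regarded as a quadratic in the transverse coordinate — to a rational section of the quadric fibration over all of $\mathbb{P}^{n-h}$.

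Granting the section, the conclusion is immediate: in case (i) the generic cubic fibre has a smooth $K$-point, hence is unirational over $K$ by \cite{Kol02}, so $X_4$ is unirational over $k$; in case (ii) the generic quadric fibre has a smooth $K$-point, hence is $K$-rational, and $X_4$, being birational to a quadric fibration over $\mathbb{P}^{n-h}$ with a section, is unirational (in fact $k$-rational). I expect the main obstacle to be exactly the section-construction in the range where $h$ is smallest and $X_4$ is smooth along $\Lambda$: this is where the bound on $\dim(\Lambda\cap\Sing X_4)$, the point $p$, and the genericity hypothesis do the work that improves on \cite{Pr49} and \cite{HMP98}.
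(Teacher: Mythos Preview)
Your treatment of case (i) is essentially the paper's argument: blow up $\Lambda$, obtain a cubic fibration over $\mathbb{P}^{n-h}$, and use the exceptional divisor $\widetilde{E}$ --- a divisor of bidegree $(1,3)$ in $\mathbb{P}^{n-h}\times\mathbb{P}^{h}$, hence unirational --- as the transverse unirational subvariety to which the Enriques--Koll\'ar criterion (Proposition~\ref{Enr} plus \cite{Kol02}) applies. One correction: you misattribute the role of the hypothesis $\dim(\Lambda\cap\Sing X_4)\le h-2$. It is \emph{not} used to project fibres from singular points and lower their degree. In the paper it is used exactly once, to guarantee that $\widetilde{E}\to\mathbb{P}^{n-h}$ is dominant: if this projection fails to be dominant one checks (see the proof of Theorem~\ref{Quart_th2}) that the equation of $X_4$ forces an $(h-1)$-dimensional subscheme of $\Lambda$ into $\Sing X_4$, contradicting the bound. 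Apart from this, your outline for (i) is correct.

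Case (ii) has a genuine gap. A smooth $k$-point on a single fibre of a quadric bundle cannot in general be ``propagated via the discriminant'' to a rational section; quadric bundles over $\mathbb{P}^{n-h}$ with a rational point on one fibre and no rational section are abundant over non-closed fields, and indeed if your argument worked it would show $X_4$ is $k$-\emph{rational}, which is not what is being claimed. The paper does something quite different and substantially more elaborate. It first projects from $\langle p,\Lambda\rangle$ to reduce to the codimension-two case $h=n-1$ (the point $p$ supplying a rational point on the generic fibre of this projection). In that case it does \emph{not} look for a section of the quadric bundle $X_4\dashrightarrow\mathbb{P}^1$; instead it constructs an explicit birational map from $X_4$ to a divisor $Y_{(3,2)}$ of bidegree $(3,2)$ in $\mathbb{P}^1\times\mathbb{P}^n$ (Lemma~\ref{l0}), then applies a small transformation of Ottaviani type to realise $Y_{(3,2)}$ birationally as a fibration over $\mathbb{P}^2$ whose fibres are complete intersections of two quadrics in $\mathbb{P}^n$ (Proposition~\ref{p0}). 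The image of $p$ lands on one such fibre, which for $n\ge 4$ is unirational by \cite{CSS87}; its strict transform back in $Y_{(3,2)}$ is a unirational subvariety dominating $\mathbb{P}^1$, and now the Enriques criterion for the quadric fibration $Y_{(3,2)}\to\mathbb{P}^1$ finishes the job. The point $p$ is thus used to produce a rational point on a complete intersection of two quadrics, not on a single quadric --- and it is the theorem of Colliot-Th\'el\`ene--Sansuc--Swinnerton-Dyer, not rationality of quadrics, that drives the argument.
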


Furthermore, for quartic $3$-folds over a number field we have the following density result:

\begin{thm}\label{teorAbis}
Let $X_4\subset\mathbb{P}^{4}$ be a quartic hypersurface, over a number field $k$, having double points along a codimension two linear subspace $\Lambda\subset\mathbb{P}^{4}$, with a point $p\in X_4\setminus\Lambda$ and otherwise general. The set $X_4(k)$ of the $k$-rational points of $X_4$ is Zariski dense in $X_4$.   
\end{thm}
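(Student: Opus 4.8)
The plan is to realise $X_4$ as birational to a quadric surface bundle over $\mathbb{P}^1$, to feed the given $k$‑rational point $p$ into the rational connectedness of $X_4$ so as to manufacture a rational multisection of that bundle defined over $k$, and finally to spread density fibre by fibre. First I would choose coordinates $[y_0:\cdots:y_4]$ with $\Lambda=\{y_0=y_1=0\}$; since $X_4$ is double along $\Lambda$ its equation is $F=y_0^2A+y_0y_1B+y_1^2C$ for suitable quadrics $A,B,C$, and the projection $\pi\colon X_4\dashrightarrow\mathbb{P}^1$, $[y]\mapsto[y_0:y_1]$, has for a general fibre the residual surface $Q_{[s:t]}$ in the decomposition $X_4\cap H_{[s:t]}=2\Lambda+Q_{[s:t]}$, where $H_{[s:t]}=\{ty_0=sy_1\}$ is the $3$‑plane through $\Lambda$. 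Because $X_4$ is double along $\Lambda$, $Q_{[s:t]}\subset H_{[s:t]}\cong\mathbb{P}^3$ is a quadric surface, smooth for general $[s:t]$ when $X_4$ is general, so $X_4$ is birational to a quadric surface bundle over $\mathbb{P}^1$. Over $\overline{k}$ the field $\overline{k}(t)$ is $C_1$, hence the generic fibre is isotropic, $X_4$ is $\overline{k}$‑rational, and in particular rationally connected.

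Next, for general $X_4$ one has $\Sing(X_4)=\Lambda$, so $p$ is a smooth point and lifts to $\widetilde{p}\in\widetilde{X}(k)$ on a resolution $\rho\colon\widetilde{X}\to X_4$; being smooth, projective and birational to a $\overline{k}$‑rational variety, $\widetilde{X}$ is separably rationally connected. I would then invoke the standard fact that over an infinite field such a variety carries, through any prescribed $k$‑point, a very free rational curve defined over $k$ — obtained by gluing the Galois conjugates of a very free curve through $\widetilde{p}$ into a comb defined over $k$ and smoothing it, in the spirit of the deformation‑theoretic arguments of \cite{HT00} — and, taking the degree large and the curve general, arrange also that $\pi\circ\rho$ is non‑constant on it. Calling $f\colon\mathbb{P}^1_k\to\widetilde{X}$ such a curve with $f(0)=\widetilde{p}$, its image $M:=\rho(f(\mathbb{P}^1))\subset X_4$ is a rational curve through $p$ whose normalisation is $\mathbb{P}^1_k$ (it carries the $k$‑point over $p$), so $M(k)$ is Zariski dense in $M$, while $\pi|_M$ is non‑constant; thus $M$ is a rational multisection of the quadric bundle.

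Finally I would spread density. Since $M(k)$ is dense in $M$ and $\pi|_M$ is non‑constant, $\pi(M(k))\subset\mathbb{P}^1(k)$ is infinite, and for all but finitely many $t$ in it the fibre $Q_t$ is a smooth quadric surface while the point of $M(k)$ lying over $t$ is a smooth $k$‑point of $Q_t$ not on $\Lambda$. Projecting $Q_t$ from that point shows it is $k$‑rational, so $Q_t(k)$ is Zariski dense in $Q_t\subset X_4$. Since $X_4$ is irreducible of dimension three and the $Q_t$ are two‑dimensional fibres of $\pi$, a closed subset of $X_4$ containing infinitely many of them has fibres of dimension $\le 1$ over $\mathbb{P}^1$ and hence must be all of $X_4$; therefore $\overline{X_4(k)}\supseteq\overline{\bigcup_t Q_t(k)}=\overline{\bigcup_t Q_t}=X_4$, which is the assertion.

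The genuinely non‑formal step is the construction of the multisection $M$. The point $p$ makes a single quadric fibre $k$‑rational, hence rich in $k$‑points, but gives no immediate control over neighbouring fibres, and there is no obvious $k$‑rational curve through $p$ transverse to $\pi$ (every curve one can write down inside the fibre through $p$ stays in that fibre); only the rational connectedness of $X_4$, combined with $p$ through the comb‑and‑smoothing argument, seems able to produce one. Once $M$ is in hand the quadric‑bundle geometry, the $k$‑rationality of a quadric surface with a smooth $k$‑point, and the fibre‑counting lemma are all routine.
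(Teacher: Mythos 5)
Your reduction of $X_4$ to a quadric surface bundle over $\mathbb{P}^1$ and your final spreading step (a smooth $k$-point on a smooth quadric surface fibre makes that fibre $k$-rational, and infinitely many two-dimensional fibres with dense $k$-points force density in the threefold) are both correct. The gap is exactly where you locate it, and it is fatal as stated: the ``standard fact'' you invoke --- that over any infinite field a separably rationally connected variety carries a very free rational curve defined over $k$ through any prescribed $k$-point --- is not a theorem over number fields. The comb-and-smoothing argument assembles the Galois conjugates of a very free curve through $\widetilde{p}$ into a nodal $k$-curve and then needs a smoothing \emph{defined over $k$}; this amounts to finding a $k$-point in a certain open subset of the deformation space. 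The deformation space is smooth at the comb and the comb itself is a $k$-point of it, but that point lies on the boundary, and over a number field a nonempty open subset of a smooth $k$-variety with a $k$-point need not contain any further $k$-point. This is precisely why Koll\'ar's theorem producing very free curves through a prescribed rational point is proved only over \emph{large} fields (local fields, real closed fields, etc.), and why the existence of even a single non-constant rational curve defined over $k$ through a given $k$-point of a rationally connected (non-unirational) variety over a number field is not known in general: if your step worked, one would get Zariski density of rational points on every rationally connected variety over a number field possessing a rational point, a well-known open problem. Note also that \cite{HT00} contains no comb-smoothing argument; its density results rest on elliptic fibrations, so the appeal to ``the spirit of \cite{HT00}'' does not supply the missing step.

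For comparison, the paper circumvents this by using the point $p$ quite differently. The birational map of Lemma \ref{l0} converts $p$ into a $2$-plane $H=\{x_1=w_1=0\}$ inside a bidegree $(3,2)$ model $Y_{(3,2)}\subset\mathbb{P}^1\times\mathbb{P}^3$, and $Y_{(3,2)}$ carries a second fibration $\eta'$ over $\mathbb{P}^2$ whose fibres are intersections of two quadrics in $\mathbb{P}^3$, i.e.\ genus-one curves, with $H$ dominating the base $\mathbb{P}^2$. Pulling back a general line $L\subset\mathbb{P}^2$ gives an elliptic surface $S_{L,p}$ containing the rational conic $C=H\cap S_{L,p}$ as a $4$-section, and \cite[Theorem 8.1]{HT00} --- the multisection theorem for elliptic fibrations, whose hypotheses are checked via the transversality of the ramification of $C\to L$ and the singular fibres --- yields density of $S_{L,p}(k)$; letting $L$ and the base point vary sweeps out $X_4$. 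So the essential input is the Harris--Tschinkel elliptic-fibration machinery applied to a pencil of elliptic surfaces manufactured from $p$, not rational connectedness plus combs.
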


As Remark \ref{RP_Nec} shows the assumption on the existence of a point $p\in X_4\setminus\Lambda$ in the case of quartics singular along a linear subspace can not be dropped. Under extra assumptions on the base field or on the existence of rational points in special subloci of $X_4$ Theorem \ref{teorA} can be extended to smaller dimensional quartics. For instance, by Proposition \ref{sing1} a quartic surface $X_4\subset\mathbb{P}^3$ with double points along a line $\Lambda\subset\mathbb{P}^3$, a point $p\in X_4\setminus\Lambda$, a double point $q\in X_4\setminus\Lambda$ with $q\neq p$ and otherwise general is unirational. Furthermore, by the second part of Theorem \ref{Quart_th} a quartic $X_4\subset\mathbb{P}^{4}$ over a $C_r$ field, for which definition we refer to Remark \ref{lang}, with $r = 0,1$, having double points along a linear subspace $\Lambda$ with $\dim(\Lambda) = 1,2$, and otherwise general is unirational. Therefore, we are left with the following open questions:

\begin{Question}\label{questA}
Let $X_4\subset\mathbb{P}^{n+1}$ be a quartic hypersurface over a field $k$ such that either
\begin{itemize}
\item[(i)] $n = 3$, $X_4$ contains a line; or
\item[(ii)] $n = 3$, $X_4$ has double points along a linear subspace $\Lambda$ with $\dim(\Lambda) = 1,2$, a point $p\in X_4\setminus\Lambda$; or
\item[(iii)] $n = 2$, $X_4$ has double points along a line $\Lambda$, a point $p\in X_4\setminus\Lambda$;
\end{itemize}
and $X_4$ is otherwise general. Is then $X_4$ unirational?
\end{Question}

As we said $\text{(ii)}$ has a positive answer when the base field is $C_r$ with $r = 0,1$, while $\text{(i)}$ is open even over the complex numbers. Since any complex quartic $3$-fold contains a line $\text{(i)}$ actually asks about the unirationality of a general quartic $3$-fold and is probably one of the most interesting unirationality open problems. Since a quartic surface $X_4\subset\mathbb{P}^3$ with a double line is birational to a conic bundle $\text{(iii)}$ is interesting only when the base field is not algebraically closed.    

Note that by considering to the generic fiber of the resolution of the linear projection form $\Lambda$ as in the proof of Theorem \ref{Quart_th} a positive answer to Question \ref{questA} would extend the first part of Theorem \ref{teorA} to quartic hypersurfaces $X_4\subset\mathbb{P}^{n+1}$ with $n\geq 3$ containing a line, and the second part of Theorem \ref{teorA} to quartic hypersurfaces $X_4\subset\mathbb{P}^{n+1}$ with $n\geq 3$ having double points along either a line or a plane and an additional smooth point.    

\begin{Remark}\label{comp}
The main available results in the spirit of Theorem \ref{teorA} can be found in \cite{Pr49} and \cite{HMP98}. By \cite[Theorem 1]{Pr49} a quartic $X_4\subset\mathbb{P}^{n+1}$ containing an $h$-plane $\Lambda$ is unirational provided that $\Sing(X_4)\cap\Lambda = \emptyset$ and $h\geq 4$. The same result has been proved in \cite[Corollary 3.7]{HMP98} for $h\geq 97$. 

We would like to stress that both \cite{Pr49} and \cite{HMP98} as well as \cite{Ram90} provide unirationality results for hypersurfaces of arbitrary degree and general unirationality bounds when the base field is algebraically closed. 

In the case of quartics, Theorem \ref{teorA} (i) improves \cite[Theorem 1]{Pr49} and \cite[Corollary 3.7]{HMP98} in two directions: on one side, it is enough to have that $h\geq 2$, on the other side, $\Lambda$ is allowed to intersect the singular locus of $X_4$ as long as such intersection has codimension at least two in $\Lambda$. 
\end{Remark}

In the last section we investigate the unirationality of quintic hypersurfaces and divisors of bidegree $(3,2)$ in products of projective spaces. As a byproduct we get new examples of unirational but not stably rational varieties.

A variety $X$ is stably rational if $X\times \mathbb{P}^m$ is rational for some $m\geq 0$. Hence, a rational variety is stably rational, and a stably rational variety is unirational. The first examples of stably rational non-rational varieties had been given in \cite{BCSS85}, where the authors, using Ch\^atelot surfaces, constructed a complex non-rational conic bundle $T$ such that $T\times \mathbb{P}^3$ is rational.

In the last decade important advances on stable rationality have been made, especially for hypersurfaces in projective spaces \cite{Vo15}, \cite{CP16}, \cite{To16}, \cite{HKT16}, \cite{AO18}, \cite{Sc18}, \cite{BG18}, \cite{HPT18}, \cite{Sc19a}, \cite{Sc19b}, \cite{HPT19}. In \cite[Theorem 1.17]{CP16} J. L. Colliot-Th\'{e}l\`ene and A. Pirutka proved that a very general smooth complex quartic $3$-fold is not stably rational. In \cite[Corollary 1.4]{Sc19b} S. Schreieder gave the first examples of unirational non stably rational smooth hypersurfaces. A. Auel, C. B\"{o}hning and A. Pirutka proved that a very general divisor of bidegree $(3,2)$ in $\mathbb{P}^3\times\mathbb{P}^2$, over the complex numbers, is not stably rational. By Theorem \ref{333} we get that such a very general divisor is unirational but not stably rational. 

Furthermore, thanks to our unirationality results for divisors of bidegree $(3,2)$ we get new results on the unirationality of quintic hypersurfaces over $C_r$ fields and number fields. Finally, we would like to stress that since all the proofs presented in the paper are constructive it is possible, given the equation cutting out the hypersurface, to establish whether or not it is general in the required sense. 

\subsection*{Conventions on the base field, terminology and organization of the paper} 
All along the paper the base field $k$ will be of characteristic zero. Let $X$ be a variety over $k$. When we say that $X$ is rational or unirational, without specifying over which field, we will always mean that $X$ is rational or unirational over $k$. Similarly, we will say that $X$ has a point or contains a variety with certain properties meaning that $X$ has a $k$-rational point or contains a variety defined over $k$ with the required properties. 

In Section \ref{sec1} we will introduce the notation, prove some preliminary results about the relation between hypersurface in projective spaces and certain divisors in projective bundles, and give an immediate generalization of a unirationality criterion due to F. Enriques. In Section \ref{sec1} we will  investigate the unirationality of quartic hypersurfaces and cubic complexes that is complete intersections of a quadric and a cubic. Finally, in Section \ref{sec3} we will address the unirationality of quintics and divisors in products of projective spaces.

\section{Hypersurfaces and divisors in projective bundles}\label{sec1}

Let $a_0,\dots,a_{h+1} \in\mathbb{Z}_{\geq 0}$ be non negative integers, and consider the simplicial toric variety $\mathcal{T}_{a_0,\dots,a_{h+1}}$ with Cox ring
$$\Cox(\mathcal{T}_{a_0,\dots,a_{h+1}})\cong k[x_0,\dots,x_{n-h},y_0,\dots,y_{h+1}]$$
$\mathbb{Z}^2$-grading given, with respect to a fixed basis $(H_1,H_2)$ of $\Pic(\mathcal{T}_{a_0,\dots,a_{h+1}})$, by the following matrix
$$
\left(\begin{array}{cccccc}
x_0 & \dots & x_{n-h} & y_0 & \dots & y_{h+1}\\
\hline
1 & \dots & 1 & -a_0 & \dots & -a_{h+1}\\ 
0 & \dots & 0 & 1 & \dots & 1
\end{array}\right)
$$
and irrelevant ideal $(x_0,\dots,x_{n-h})\cap (y_0,\dots,y_{h+1})$.
Then
$$\mathcal{T}_{a_0,\dots,a_{h+1}}\cong \mathbb{P}(\mathcal{E}_{a_0,\dots,a_{h+1}})$$
with
$\mathcal{E}_{a_0,\dots,a_{h+1}}\cong\mathcal{O}_{\mathbb{P}^{n-h}}(a_0)\oplus\dots\oplus
\mathcal{O}_{\mathbb{P}^{n-h}}(a_{h+1})$. The secondary fan of $\mathcal{T}_{a_0,\dots,a_{h+1}}$ is as follows
$$
\begin{tikzpicture}[line cap=round,line join=round,>=triangle 45,x=1.0cm,y=1.0cm]
\clip(-7.,-0.2) rectangle (2.,1.3);
\draw [->,line width=0.4pt] (0.,0.) -- (-1.,1.);
\draw [->,line width=0.4pt] (0.,0.) -- (-2.,1.);
\draw [->,line width=0.4pt] (0.,0.) -- (-6.,1.);
\draw [->,line width=0.4pt] (0.,0.) -- (0.,1.);
\draw [->,line width=0.4pt] (0.,0.) -- (1.,0.);
\draw [line width=0.4pt,dotted] (-2.2,1.)-- (-5.6,1.);
\begin{scriptsize}
\draw [fill=black] (-1.,1.) circle (0.5pt);
\draw[color=black] (-1,1.15) node {$v_{h+1}$};
\draw [fill=black] (-2.,1.) circle (0.5pt);
\draw[color=black] (-2,1.15) node {$v_{h}$};
\draw [fill=black] (-6.,1.) circle (0.5pt);
\draw[color=black] (-6,1.15) node {$v_0$};
\draw [fill=black] (0.,1.) circle (0.5pt);
\draw[color=black] (0.25,1.15) node {$H_2$};
\draw [fill=black] (1.,0.) circle (0.5pt);
\draw[color=black] (1.25,0.02) node {$H_1$};
\end{scriptsize}
\end{tikzpicture}
$$
where $H_1 = (1,0)$ corresponds to the sections $x_0,\dots,x_{n-h}$, $H_2 = (0,1)$, and $v_i = (-a_i,1)$ corresponds to the section $y_i$ for $i = 0,\dots,h+1$.

\begin{Definition}\label{splitQB}
A divisor $D\subset\mathcal{T}_{a_0,\dots,a_{h+1}}$ of multidegree $(\delta_{d,0,\dots,0},\dots,\delta_{0,\dots,0,d}; d)$ is a hypersurface given by an equation of the following form
\stepcounter{thm}
\begin{equation}\label{Cox_g}
D := \left\lbrace\sum_{0\leq i_0\leq \dots \leq i_{h+1}\: |\: i_0+ \dots + i_{h+1}=d}\sigma_{i_0,\dots,i_{h+1}}(x_0,\dots,x_{n-h})y_0^{i_0}\dots y_{h+1}^{i_{h+1}} = 0\right\rbrace\subset \mathcal{T}_{a_0,\dots,a_{h+1}}
\end{equation}
where $\sigma_{i_0,\dots,i_{h+1}}\in k[x_0,\dots,x_{n-h}]_{\delta_{i_0,\dots,i_{h+1}}}$ and 
\stepcounter{thm}
\begin{equation}\label{compdeg}
\delta_{d,0,\dots,0}-da_0 = \delta_{d-1,1,0,\dots,0}-(d-1)a_0-a_1 = \dots = \delta_{0,\dots,0,d}-da_{h+1}.
\end{equation}
\end{Definition}
Without loss of generality we may assume that $a_0\geq a_1\geq\dots\geq a_{h+1}$ so that (\ref{compdeg}) yields $\delta_{d,0,\dots,0} \geq \delta_{0,d,0,\dots,0}\geq \dots\geq \delta_{0,\dots,0,d}$. 

\begin{Lemma}\label{hyp}
Let $X_d\subset\mathbb{P}^{n+1}$ be a hypersurface of degree $d$ having multiplicity $m$ along an $h$-plane $\Lambda$, and $\widetilde{X}_d$ the blow-up of $X_d$ along $\Lambda$ with exceptional divisor $\widetilde{E}\subset \widetilde{X}_d$. Then $\widetilde{X}_d$ is isomorphic to a divisor of multidegree 
$$(d,d-1,\dots,d-1,\dots,d-j,\dots,d-j,\dots,m,\dots,m; d-m)$$
in $\mathcal{T}_{1,0,\dots,0}$ where $d-j$ is repeated $\binom{h+j}{j}$ times for $j = 0,\dots,d-m$. The exceptional divisor $\widetilde{E}$ is a divisor of bidegree $(m,d-m)$ in $\mathbb{P}^{n-h}\times\mathbb{P}^{h}$.  
\end{Lemma}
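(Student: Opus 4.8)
The plan is as follows. First choose homogeneous coordinates $x_0,\dots,x_{n-h},w_0,\dots,w_h$ on $\mathbb{P}^{n+1}$ with $\Lambda=\{x_0=\dots=x_{n-h}=0\}$, and write the equation of $X_d$ as $F=\sum_{k=m}^{d}F_k$ with $F_k$ bihomogeneous of degree $k$ in the $x_i$'s and $d-k$ in the $w_j$'s; the hypothesis $\mult_\Lambda X_d=m$ amounts to the vanishing of the terms of $x$-degree $<m$ together with $F_m\not\equiv 0$. Recall also that the strict transform of $X_d$ in $\Bl_\Lambda\mathbb{P}^{n+1}$ is canonically the blow-up $\widetilde X_d=\Bl_\Lambda X_d$ and its exceptional divisor is the intersection with the exceptional divisor of $\Bl_\Lambda\mathbb{P}^{n+1}$, so it suffices to work inside $\Bl_\Lambda\mathbb{P}^{n+1}$.

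Second, identify $\mathcal{T}_{1,0,\dots,0}$ with $\Bl_\Lambda\mathbb{P}^{n+1}$. In the Cox ring $k[x_0,\dots,x_{n-h},y_0,\dots,y_{h+1}]$ the linear system $|H_2|$ is spanned by $x_0y_0,\dots,x_{n-h}y_0,y_1,\dots,y_{h+1}$ and defines a morphism $\pi\colon\mathcal{T}_{1,0,\dots,0}\to\mathbb{P}^{n+1}$, $(x:y)\mapsto(x_0y_0:\dots:x_{n-h}y_0:y_1:\dots:y_{h+1})$; the preimage of $\Lambda$ is $\{x_iy_0=0\text{ for all }i\}$, which, since the irrelevant ideal forbids $x_0=\dots=x_{n-h}=0$, equals the irreducible divisor $E:=\{y_0=0\}$ of class $H_2-H_1$, and $\pi$ restricts to an isomorphism over $\mathbb{P}^{n+1}\setminus\Lambda$ and realizes the blow-up of $\Lambda$ (e.g. by comparing fans). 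Dually, $|H_1|=\langle x_0,\dots,x_{n-h}\rangle$ gives the projection $\mathcal{T}_{1,0,\dots,0}\to\mathbb{P}^{n-h}$ resolving the linear projection of $\mathbb{P}^{n+1}$ from $\Lambda$, so $H_1$ pulls back the hyperplane class of $\mathbb{P}^{n-h}$ and $H_2$ that of $\mathbb{P}^{n+1}$. Restricting the Cox ring to $E=\{y_0=0\}$ leaves $k[x_0,\dots,x_{n-h},y_1,\dots,y_{h+1}]$ with the two standard gradings, whence $E\cong\mathbb{P}^{n-h}\times\mathbb{P}^{h}$.

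Third, pull $F$ back by $\pi$: the substitution $x_i\mapsto x_iy_0$, $w_j\mapsto y_{j+1}$ turns $F_k$ into $y_0^{k}G_k$, where $G_k\in k[x_0,\dots,x_{n-h},y_1,\dots,y_{h+1}]$ is bihomogeneous of degree $k$ in the $x$'s and $d-k$ in the $y_1,\dots,y_{h+1}$; hence $\pi^*F=y_0^{m}\widetilde F$ with $\widetilde F=\sum_{k=m}^{d}y_0^{\,k-m}G_k$, the factor $y_0^{m}$ accounting for $mE$ and $\widetilde X_d=\{\widetilde F=0\}$. Each summand $y_0^{k-m}G_k$ has bidegree $(k-m)(-1,1)+(k,d-k)=(m,d-m)$, so $\widetilde X_d$ has class $mH_1+(d-m)H_2$; expanding $\widetilde F$ in monomials $y_0^{i_0}\cdots y_{h+1}^{i_{h+1}}$, which necessarily satisfy $i_0+\dots+i_{h+1}=d-m$, the coefficient of such a monomial is an $x$-form of degree $\delta_{i_0,\dots,i_{h+1}}=m+i_0$ (it originates from $F_k$ with $k=m+i_0$). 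One checks these $\delta$'s satisfy the compatibility condition (\ref{compdeg}) for $\mathcal{T}_{1,0,\dots,0}$, and writing $j=(d-m)-i_0$ the degree equals $d-j$, realized by the $\binom{h+j}{j}$ monomials $y_0^{(d-m)-j}\cdot(\text{monomial of degree }j\text{ in }y_1,\dots,y_{h+1})$; this is exactly the multidegree claimed, in the sense of Definition \ref{splitQB}.

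Finally, $\widetilde E=\widetilde X_d\cap E=\{\widetilde F=0,\ y_0=0\}$, and setting $y_0=0$ in $\widetilde F$ annihilates every summand except $k=m$, leaving $G_m$, bihomogeneous of degree $m$ in $x_0,\dots,x_{n-h}$ and degree $d-m$ in $y_1,\dots,y_{h+1}$, and nonzero precisely because $\mult_\Lambda X_d=m$; under $E\cong\mathbb{P}^{n-h}\times\mathbb{P}^{h}$ this presents $\widetilde E$ as a divisor of bidegree $(m,d-m)$. I expect the only subtle point to be the second step — pinning down the isomorphism $\mathcal{T}_{1,0,\dots,0}\cong\Bl_\Lambda\mathbb{P}^{n+1}$ with the correct classes of $E$, $H_1$, $H_2$ and the identification $E\cong\mathbb{P}^{n-h}\times\mathbb{P}^h$ — after which everything reduces to a bookkeeping of bidegrees in which the multiplicity hypothesis is used exactly twice: to factor out $y_0^{m}$ and no higher power, and to guarantee $G_m\ne 0$.
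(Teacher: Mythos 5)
Your proposal is correct and follows essentially the same route as the paper: identify $\Bl_\Lambda\mathbb{P}^{n+1}$ with $\mathcal{T}_{1,0,\dots,0}$ via the contraction $(x,y)\mapsto[x_0y_0:\dots:x_{n-h}y_0:y_1:\dots:y_{h+1}]$, substitute into the equation of $X_d$, factor out $y_0^{m}$, and read off the multidegree and the bidegree of $\widetilde{E}$ on $E=\{y_0=0\}\cong\mathbb{P}^{n-h}\times\mathbb{P}^h$. The only (harmless) difference is bookkeeping: you group the equation by $x$-degree as $F=\sum_{k=m}^d F_k$ and verify the compatibility condition (\ref{compdeg}) explicitly, whereas the paper writes the minimal-degree part as a sum over degree-$m$ monomials with coefficients $A_{m_0,\dots,m_{n-h}}$ and obtains the identification $\mathcal{T}\cong\mathcal{T}_{1,0,\dots,0}$ by row operations on the grading matrix.
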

\begin{proof}
We may assume that $\Lambda = \{z_0 = \dots = z_{n-h} = 0\}$ and 
$$X_d = \left\lbrace \sum_{m_0+\dots+m_{n-h} = m}z_0^{m_0}\dots z_{n-h}^{m_{n-h}}A_{m_0,\dots,m_{n-h}}(z_0,\dots,z_{n+1}) = 0\right\rbrace\subset\mathbb{P}^{n+1}$$
with $A_{m_0,\dots,m_{n-h}}\in k[z_0,\dots,z_{n+1}]_{d-m}$. The blow-up of $\mathbb{P}^{n+1}$ along $\Lambda$ is the simplicial toric variety $\mathcal{T}$ with Cox ring
$$\Cox(\mathcal{T})\cong k[x_0,\dots,x_{n-h},y_0,\dots,y_{h+1}]$$
$\mathbb{Z}^2$-grading given, with respect to a fixed basis $(H_1,H_2)$ of $\Pic(\mathcal{T})$, by the following matrix
$$
\left(\begin{array}{cccccccc}
x_0 & \dots & x_{n-h} & y_0 & y_1 & \dots & y_{h+1}\\
\hline
1 & \dots & 1  & 0 & 1 & \dots & 1\\ 
-1 & \dots & -1 & 1 & 0 & \dots & 0
\end{array}\right)
$$
and irrelevant ideal $(x_0,\dots,x_{n-h})\cap (y_0,\dots,y_{h+1})$. Substituting in the above matrix the first row with the sum of the rows, and then substituting in the matrix so obtained the first row with the difference of the first and the second row we get to the following grading matrix
\stepcounter{thm}
\begin{equation}\label{gM2}
\left(\begin{array}{cccccccc}
x_0 & \dots & x_{n-h} & y_0 & y_1 & \dots & y_{h+1}\\
\hline
1 & \dots & 1  & -1 & 0 & \dots & 0\\ 
0 & \dots & 0 & 1 & 1 & \dots & 1 
\end{array}\right)
\end{equation}
and hence $\mathcal{T} = \mathcal{T}_{1,0,\dots,0}$. The blow-down morphism is given by 
$$
\begin{array}{cccc}
 \phi: & \mathcal{T}_{1,0,\dots,0} & \longrightarrow & \mathbb{P}^{n+1}\\
  & (x_0,\dots,x_{n-h},y_0,\dots,y_{h+1}) & \mapsto & [x_0y_0:\dots :x_{n-h}y_0:y_1:\dots :y_{h+1}] 
\end{array}
$$
and the exceptional divisor is $E = \{y_0 = 0\}$. Hence, the strict transform of $X_d$ is defined by
\stepcounter{thm}
\begin{equation}\label{st}
\widetilde{X}_d = \left\lbrace \sum_{m_0+\dots+m_{n-h} = m}x_0^{m_0}\dots x_{n-h}^{m_{n-h}}A_{m_0,\dots,m_{n-h}}(x_0y_0,\dots,x_{n-h}y_0,y_1,\dots,y_{h+1}) = 0\right\rbrace\subset\mathcal{T}_{1,0,\dots,0}
\end{equation}
and the claim on the multidegree follows. Note that (\ref{gM2}) yields that $E \cong \mathbb{P}^{n-h}_{(x_0,\dots,x_{n-h})}\times\mathbb{P}^h_{(y_1,\dots,y_{h+1})}$ and hence $\widetilde{E} = \widetilde{X}_d\cap E \subset\mathbb{P}^{n-h}_{(x_0,\dots,x_{n-h})}\times\mathbb{P}^h_{(y_1,\dots,y_{h+1})}$ is a divisor of bidegree $(m,d-m)$.
\end{proof}

The following is a straightforward generalization of a unirationality criterion for conic bundles due to F. Enriques \cite[Proposition 10.1.1]{IP99}.  

\begin{Proposition}\label{Enr}
Let $f:X\rightarrow Y$ be a fibration over a unirational variety $Y$. Assume that there exists a unirational subvariety $Z\subset X$ such that $f_{|Z}:Z\rightarrow Y$ is dominant, consider the fiber product 
\[
  \begin{tikzpicture}[xscale=2.6,yscale=-1.3]
    \node (A0_0) at (0, 0) {$X_Z = X\times_{Y}Z$};
    \node (B) at (1, 0) {$X$};
    \node (A1_0) at (0, 1) {$Z$};
    \node (A1_1) at (1, 1) {$Y$};
    \path (A0_0) edge [->]node [auto] {$\scriptstyle{}$} (B);
    \path (B) edge [->]node [auto] {$\scriptstyle{f}$} (A1_1);
    \path (A1_0) edge [->]node [auto] {$\scriptstyle{f_{|Z}}$} (A1_1);
    \path (A0_0) edge [->]node [auto,swap] {$\scriptstyle{\widetilde{f}}$} (A1_0);
  \end{tikzpicture}
\]
and denote by $X_{Z,\eta}$ the generic fiber of $\widetilde{f}:X_Z\rightarrow Z$. Finally, assume that $X_{Z,\eta}$ is unirational over $k(Z)$ if and only if it has a $k(Z)$-rational point. Then $X$ is unirational. 
\end{Proposition}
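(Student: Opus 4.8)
The plan is to build an explicit unirational parametrization of $X$ by exploiting the dominant map $f_{|Z}\colon Z\to Y$ together with the hypothesis on the generic fiber. First I would pass to function fields: since $f_{|Z}$ is dominant, the generic point of $Y$ pulls back, and the generic fiber $X_{Z,\eta}$ of $\widetilde f\colon X_Z\to Z$ is a variety over the field $k(Z)$. I claim this fiber always has a $k(Z)$-rational point. Indeed, $X_Z=X\times_Y Z$ comes equipped with the diagonal-type section coming from the fact that $Z$ maps into $X$ over $Y$: the morphism $Z\to X$ (the inclusion $Z\hookrightarrow X$) together with $\mathrm{id}_Z\colon Z\to Z$ induces, by the universal property of the fiber product, a morphism $s\colon Z\to X_Z$ with $\widetilde f\circ s=\mathrm{id}_Z$. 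Restricting $s$ to the generic point of $Z$ gives the desired $k(Z)$-point of $X_{Z,\eta}$. By the standing hypothesis that $X_{Z,\eta}$ is unirational over $k(Z)$ if and only if it has a $k(Z)$-rational point, we conclude that $X_{Z,\eta}$ is unirational over $k(Z)$.

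Next I would unravel what "unirational over $k(Z)$" means geometrically. A dominant rational map $\mathbb{P}^m_{k(Z)}\dashrightarrow X_{Z,\eta}$ spreads out, over a suitable dense open $U\subseteq Z$, to a dominant rational map $\mathbb{P}^m_k\times U\dashrightarrow X_Z$ commuting with the projections to $U$ (equivalently to $Z$). Composing with the natural projection $X_Z=X\times_Y Z\to X$ we obtain a rational map $g\colon \mathbb{P}^m_k\times Z\dashrightarrow X$. I would check that $g$ is dominant: on the $Z$-factor it is compatible with $f$ and $f_{|Z}$ (so it surjects onto $Y$ generically since $f_{|Z}$ is dominant), and over the generic point of $Z$ — hence over the generic point of $Y$ — it hits the generic fiber of $f$ densely, because the parametrization of $X_{Z,\eta}$ was dominant and the map $X_{Z,\eta}\to X_\eta$ (base change of $f_{|Z}$ along $\operatorname{Spec}k(Y)\to Y$, then the projection) dominates the generic fiber $X_\eta$ of $f$. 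Chasing the two surjections (onto $Y$ and onto each generic fiber) shows $g(\mathbb{P}^m_k\times Z)$ is dense in $X$.

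Finally, $Y$ is unirational by hypothesis, and $Z$ is unirational by hypothesis; hence $\mathbb{P}^m_k\times Z$ is unirational (a product of a projective space with a unirational variety is unirational, via the product of the parametrizations). Composing a dominant rational map $\mathbb{P}^N_k\dashrightarrow \mathbb{P}^m_k\times Z$ with the dominant rational map $g$ yields a dominant rational map $\mathbb{P}^N_k\dashrightarrow X$, so $X$ is unirational over $k$, as claimed.

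The main obstacle — and the only place where real care is needed — is the dominance bookkeeping in the second paragraph: one must simultaneously track that the spread-out parametrization dominates $X$ in the "vertical" direction (the fibers of $f$) and in the "horizontal" direction (the base $Y$, via $f_{|Z}$), and that the identifications $X_{Z,\eta}\times_{\operatorname{Spec}k(Z)}\operatorname{Spec}\overline{k(Z)}$ versus the scheme-theoretic generic fiber behave well under the projection $X_Z\to X$. All of this is standard once one sets up the fiber-product diagram carefully, and it is exactly the generalization of Enriques' conic bundle argument referred to before the statement; no new idea beyond the construction of the tautological section $s\colon Z\to X_Z$ is required.
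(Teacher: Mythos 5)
Your proposal is correct and follows essentially the same route as the paper: the tautological section $s\colon Z\to X_Z$ induced by $Z\hookrightarrow X$ and $\mathrm{id}_Z$ is exactly the "rational section of $\widetilde f$" that the paper invokes to produce the $k(Z)$-point, after which both arguments conclude that $X_{Z,\eta}$ is unirational over $k(Z)$, hence $X_Z$ is unirational over $k$, and $X$ is dominated by $X_Z$. Your version merely spells out the spreading-out and dominance bookkeeping that the paper leaves implicit.
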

\begin{proof}
Note that the dominant morphism $f_{|Z}:Z\rightarrow Y$ yields a rational section of   $\widetilde{f}:X_Z\rightarrow Z$. So $X_{Z,\eta}$ has a $k(Z)$-rational point and hence it is unirational over $k(Z)$. Therefore, $X_Z$ is unirational and hence $X$ is unirational as well.
\end{proof}

\begin{Remark}\label{QC}
A quadric hypersurface over a field $k$ with a smooth point is rational. Furthermore, by \cite[Theorem 1.2]{Kol02} a cubic hypersurface of dimension at least two with a point and which is not a cone is unirational. 
\end{Remark}

\begin{Remark}(Lang's theorem)\label{lang}
Fix a real number $r\in\mathbb{R}_{\geq 0}$. A field $k$ is $C_r$ if and only if  every homogeneous polynomial $f\in k[x_0,\dots,n_n]_d$ of degree $d > 0$ in $n+1$ variables with $n+1 > d^r$ has a non trivial zero in $k^{n+1}$.  

If $k$ is a $C_r$ field, $f_1,\dots, f_s\in k[x_0,\dots,n_n]_d$ are homogeneous polynomials of the same degree and $n+1 > sd^r$ then $f_1,\dots,f_s$ have a non trivial common zero in $k^{n+1}$ \cite[Proposition 1.2.6]{Poo17}. Furthermore, if $k$ is $C_r$ then $k(t)$ is $C_{r+1}$ \cite[Theorem 1.2.7]{Poo17}.  
\end{Remark}

In the last section we will need the following:

\begin{Proposition}\label{qb-hyp}
Let $D\subset \mathcal{T}_{a_0,\dots,a_{h+1}}\rightarrow \mathbb{P}^{n-h}$ be a divisor of multidegree $(\delta_{2,0,\dots,0},\dots,\delta_{0,\dots,0,2},2)$. Then $D$ is birational to a hypersurface $X_{\delta_{2,0,\dots,0}+2}^n\subset\mathbb{P}^{n+1}$ of degree $\delta_{2,0,\dots,0}+2$ having multiplicity $\delta_{2,0,\dots,0}$ along an $h$-plane $\Lambda$ and multiplicity two along an $(n-h-1)$-plane $\Lambda'$ such that $\Lambda\cap\Lambda' = \emptyset$. 
\end{Proposition}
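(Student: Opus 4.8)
The plan is to manufacture $X$ explicitly from the equation of $D$ and then to check that, over $\mathbb{P}^{n-h}$, the two are isomorphic quadric bundles after a linear change of the fibre coordinates, hence birational. First I would write $D$ through (\ref{Cox_g}) with $d=2$, isolating the variable $y_0$:
\[
D=\Bigl\{\sigma_{00}(x)\,y_0^{2}+\sum_{j=1}^{h+1}\sigma_{0j}(x)\,y_0y_j+\sum_{1\le i\le j\le h+1}\sigma_{ij}(x)\,y_iy_j=0\Bigr\},
\]
put $\delta:=\delta_{2,0,\dots,0}=\deg\sigma_{00}$, and record that (\ref{compdeg}) forces $\deg\sigma_{0j}=\delta-a_0+a_j$ and $\deg\sigma_{ij}=\delta-2a_0+a_i+a_j$ for $1\le i\le j\le h+1$. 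Using the normalization $a_0\ge\cdots\ge a_{h+1}$ adopted after Definition \ref{splitQB}, I would then fix forms $e_1,\dots,e_{h+1}\in k[x_0,\dots,x_{n-h}]$ with $\deg e_i=a_0-a_i\ (\ge 0)$, chosen generally, so that $e_i=1$ whenever $a_i=a_0$.

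Next I would define the hypersurface: give $\mathbb{P}^{n+1}$ coordinates $[p_0:\cdots:p_{n-h-1}:t:q_0:\cdots:q_h]$ and set
\[
F:=t^{2}\sigma_{00}+t\sum_{j=1}^{h+1}\sigma_{0j}e_j\,q_{j-1}+\sum_{1\le i\le j\le h+1}\sigma_{ij}e_ie_j\,q_{i-1}q_{j-1},
\]
where every $\sigma$ and $e$ is evaluated at $(x_0,\dots,x_{n-h})=(p_0,\dots,p_{n-h-1},t)$. The role of the weights $a_0-a_i$ is precisely to make $\deg(\sigma_{0j}e_j)=\deg(\sigma_{ij}e_ie_j)=\delta$, so that $F$ is homogeneous of degree $\delta+2$. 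Reading off the three groups of monomials of $F$, one sees at once that $X:=\{F=0\}$ has multiplicity $\delta$ along the $h$-plane $\Lambda:=\{p_0=\cdots=p_{n-h-1}=t=0\}$ (it is $\ge\delta$ from the shape of $F$, and exactly $\delta$ because, $D$ being irreducible, some $\sigma_{ij}$ with $i,j\ge 1$ is nonzero), multiplicity $2$ along the $(n-h-1)$-plane $\Lambda':=\{t=q_0=\cdots=q_h=0\}$ (the degree‑$2$‑in‑$(t,q)$ part of $F$ at a general point of $\Lambda'$ does not vanish, again because $D$ is a reduced nonzero divisor), and $\Lambda\cap\Lambda'=\emptyset$. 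Thus $X$ is a hypersurface of degree $\delta_{2,0,\dots,0}+2$ with the asserted singularities.

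Finally I would compare the two fibrations over $\mathbb{P}^{n-h}$. The $(h+1)$-plane of the linear projection from $\Lambda$ sitting over a point $[x_0:\cdots:x_{n-h}]$ is $P=\{[\lambda x_0:\cdots:\lambda x_{n-h}:q_0:\cdots:q_h]\}$; restricting $F$ to $P$ and using the homogeneity of the coefficients recorded above, the factor $\lambda^{\delta}$ (namely $\Lambda$ counted with multiplicity $\delta$) splits off, and the residual quadric in $P\cong\mathbb{P}^{h+1}_{[\lambda:q_0:\cdots:q_h]}$ is, after renaming $y_0=\lambda$ and $y_j=q_{j-1}$,
\[
Q_X=x_{n-h}^{2}\sigma_{00}\,y_0^{2}+x_{n-h}\sum_{j=1}^{h+1}\sigma_{0j}e_j\,y_0y_j+\sum_{1\le i\le j\le h+1}\sigma_{ij}e_ie_j\,y_iy_j .
\]
Hence the general fibre of the projection of $X$ from $\Lambda$ is $\{Q_X=0\}$, while the generic fibre of $D\to\mathbb{P}^{n-h}$ is $\{Q_D=0\}$ with $Q_D=\sigma_{00}y_0^{2}+\sum_j\sigma_{0j}y_0y_j+\sum_{i\le j}\sigma_{ij}y_iy_j$. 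Over $K:=k(\mathbb{P}^{n-h})$ the diagonal substitution $y_0\mapsto x_{n-h}y_0$, $y_j\mapsto e_jy_j$ $(1\le j\le h+1)$ lies in $\mathrm{GL}_{h+2}(K)$, since $x_{n-h},e_1,\dots,e_{h+1}$ are nonzero in $K$, and it carries $Q_D$ into $Q_X$; so the generic fibres of $D$ and $X$ over $\mathbb{P}^{n-h}$ are isomorphic quadrics. As two varieties fibred over $\mathbb{P}^{n-h}$ with isomorphic generic fibres are birational, $D$ is birational to $X$, which is the proposition.

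The substance of the argument is the insertion of the auxiliary forms $e_i$ of degree $a_0-a_i$: this is what allows one to pass from a general split projective bundle $\mathcal{T}_{a_0,\dots,a_{h+1}}$ to the single bundle $\mathcal{T}_{1,0,\dots,0}$ underlying the construction (so that the present construction refines Lemma \ref{hyp}), and everything else is the degree bookkeeping dictated by (\ref{compdeg}) together with the verification that the two multiplicities come out exactly $\delta$ and $2$ — a point that uses only that $D$ is a reduced irreducible divisor and is in any case innocuous for the applications in Section \ref{sec3}.
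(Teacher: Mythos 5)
Your proof is correct, but it takes a genuinely different route from the paper's. The paper simply dehomogenizes the Cox-ring equation (\ref{Cox_g}) of $D$ with respect to $x_{n-h}$ and $y_{h+1}$, obtaining an affine hypersurface in $\mathbb{A}^{n+1}$, and then rehomogenizes to degree $\delta_{2,0,\dots,0}+2$: birationality is then automatic, since the two varieties share a common affine open set, and the two multiplicities are read off from the shape (\ref{pol_proj}) of the rehomogenized polynomial, the needed inequalities $\delta_{i_0,\dots,i_{h+1}}\le\delta_{2,0,\dots,0}$ coming from the normalization $a_0\ge\dots\ge a_{h+1}$. You instead write down the target hypersurface by an explicit closed formula, inserting auxiliary forms $e_i$ of degree $a_0-a_i$ to equalize the coefficient degrees dictated by (\ref{compdeg}), and you prove birationality by matching the generic fibres of the two quadric fibrations over $\mathbb{P}^{n-h}$ through a diagonal element of $\mathrm{GL}_{h+2}\bigl(k(\mathbb{P}^{n-h})\bigr)$. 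What the paper's route buys is brevity and a birational map that needs no verification; what yours buys is an explicit fibrewise identification and a transparent account of how the twists $a_i$ enter, which makes the link with the blow-down of Lemma \ref{hyp} visible. Both arguments quietly rely on the same mild genericity — that the relevant leading coefficients do not degenerate, so that the degree is exactly $\delta_{2,0,\dots,0}+2$, no spurious component supported on $\{x_{n-h}=0\}$ appears, and the multiplicities are exactly $\delta_{2,0,\dots,0}$ and $2$ rather than larger; your aside attributing part of this to irreducibility of $D$ is the same implicit assumption the paper makes, and it is harmless for the applications in Section \ref{sec3}, where the coefficients are general.
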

\begin{proof}
Write $D\subset \mathcal{T}_{a_0,\dots,a_{h+1}}$ as in (\ref{Cox_g}) and dehomogenize with respect to $x_{n-h}$ and $y_{h+1}$ to get an affine hypersurface 
$$
X = \{f(x_0,\dots,x_{n-h-1},y_0,\dots,y_h) = 0\}\subset\mathbb{A}^{n+1}_{(x_0,\dots,x_{n-h-1},y_0,\dots,y_h)}.
$$
Now, we introduce a new variable that we will keep denoting by $x_{n-h}$ and homogenize $f$ in order to get a polynomial $\overline{f}(x_0,\dots,x_{n-h-1},y_0,\dots,y_h,x_{n-h})$ which is homogeneous of degree $\delta_{2,0,\dots,0}+2$. Note that $\overline{f}$ has the following form
\stepcounter{thm}
\begin{equation}\label{pol_proj}
\overline{f} = \sum_{i_0+\dots +i_{n-h} = \delta_{2,0,\dots,0}}x_0^{i_0}\dots x_{n-h}^{i_{n-h}}A_{i_0,\dots,i_{n-h}}(y_0,\dots,y_h,x_{n-h})
\end{equation}
with $A_{i_0,\dots,i_{n-h}}\in k[y_0,\dots,y_h,x_{n-h}]_2$ for all $0\leq i_0\leq\dots\leq i_{n-h}\leq \delta_{2,0,\dots,0}$. The hypersurface
$$
X_{\delta_{2,0,\dots,0}+2}^n = \{\overline{f}(x_0,\dots,x_{n-h-1},y_0,\dots,y_h,x_{n-h}) = 0\}\subset\mathbb{P}^{n+1}
$$
is birational to $X$ and hence it is birational to $D$ as well. To conclude set
$$
\Lambda = \{x_0 = \dots = x_{n-h} = 0\},\: \Lambda' = \{y_0 = \dots = y_{h} = x_{n-h} = 0\};
$$ 
and note that (\ref{pol_proj}) yields $\mult_{\Lambda}X_{\delta_{2,0,\dots,0}+2}^n = \delta_{2,0,\dots,0}$ and $\mult_{\Lambda'}X_{\delta_{2,0,\dots,0}+2}^n = 2$.
\end{proof}

\section{Cubic complexes and Quartics}\label{sec2}
In this section we investigate the unirationality of quartic hypersurface containing linear subspaces and quadrics, and of complete intersections of quadric and cubic hypersurfaces. The following is an immediate consequence of \cite[Theorem 2.1]{CMM07}.
\begin{Lemma}\label{Enr_gen}
Let $Y_{2,3} = Y_2 \cap Y_3\subset\mathbb{P}^{n+2}$ with $n\geq 3$ be a smooth complete intersection, of a quadric $Y_2$ and a cubic $Y_3$, defined over a field $k$ with $\ch(k)\neq 2,3$. If $Y_{2}$ contains a $2$-plane $\Pi$ and $Y_2$ is smooth at the general point of $\Pi$ then $Y_{2,3}$ is unirational. 
\end{Lemma}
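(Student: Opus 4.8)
The plan is to exhibit a unirational subvariety $Z \subset Y_{2,3}$ that dominates a unirational base, so that Enriques' criterion (Proposition~\ref{Enr}) reduces us to the study of the generic fiber. The natural base is the $2$-plane $\Pi \subset Y_2$: since $Y_2$ is smooth at the general point of $\Pi$, projection away from $\Pi$ should present $Y_{2,3}$ (or a model of it) as fibered over a lower-dimensional space, and the fibers will be cubic hypersurfaces equipped with extra rational structure. Concretely, I would first analyze the linear projection $\pi_\Pi : \mathbb{P}^{n+2} \dashrightarrow \mathbb{P}^{n-1}$ from $\Pi$. Resolving the indeterminacy by blowing up $\Pi$ gives a morphism whose restriction to (the strict transform of) $Y_{2,3}$ is a fibration $f : \widetilde{Y}_{2,3} \to \mathbb{P}^{n-1}$, or possibly over $\mathbb{P}^1$ after a secondary projection, depending on how one sets things up. The key point is that the quadric $Y_2$ containing the $2$-plane $\Pi$ and smooth along it forces the generic fiber of this projection to carry a rational point coming from $\Pi$ itself, together with enough linear geometry that the cubic condition becomes tractable.

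The second step is to identify the generic fiber. Fixing a general point of the base, the fiber of $Y_{2,3}$ under projection from $\Pi$ should be the intersection of a quadric cone (the fiber of $Y_2$, which contains a line or point from $\Pi$) and a cubic. Using the point on $\Pi$ we can further project, reducing the fiber to a cubic hypersurface over the function field $K = k(\mathbb{P}^{n-1})$ — and here the dimension count using $n \geq 3$ guarantees this cubic has dimension at least two. The crucial input is that this cubic fiber, by construction, acquires a $K$-rational point (again traced back to $\Pi$) and is not a cone; then Remark~\ref{QC} (Kollár's theorem, \cite[Theorem 1.2]{Kol02}) makes it unirational over $K$. This is exactly the content of \cite[Theorem 2.1]{CMM07} that the statement invokes, so in practice I would cite that result to package the fiberwise unirationality, checking only that its hypotheses (a $2$-plane in the quadric, smoothness of $Y_2$ along it, the dimension bound $n \geq 3$) are met.

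The third step is to run Proposition~\ref{Enr}: take $Y = \mathbb{P}^{n-1}$ (or $\mathbb{P}^1$), which is rational hence unirational; take $Z$ to be a unirational subvariety of $Y_{2,3}$ dominating $Y$ — the most natural candidate being built directly from the plane $\Pi$ (a $\mathbb{P}^2$-bundle structure or the family of lines through $\Pi$ meeting $Y_{2,3}$), so that $Z$ is rational and $f|_Z$ is dominant; and verify that the generic fiber $X_{Z,\eta}$ is unirational over $k(Z)$ as soon as it has a $k(Z)$-point, which is precisely what Kollár's theorem for cubics gives. Conclude that $Y_{2,3}$ is unirational.

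The main obstacle I anticipate is the bookkeeping in the projection from $\Pi$: ensuring that the strict transform of $Y_{2,3}$ under the blow-up really is a fibration whose generic fiber is a cubic with a rational point that is \emph{not a cone}, and that this rational section exists over the whole base and not just generically. The smoothness of $Y_2$ at the general point of $\Pi$ is exactly what prevents the quadric fibers from degenerating too badly, so the delicate part is using that hypothesis precisely to control the fiber geometry — but since \cite[Theorem 2.1]{CMM07} already carries out this analysis, the proof here should reduce to a short verification that we are in the situation covered by that theorem.
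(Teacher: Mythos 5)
Your proposed fibration does not do what you need it to do. Projecting $\mathbb{P}^{n+2}$ from the $2$-plane $\Pi$ lands in $\mathbb{P}^{n-1}$, and since $\dim Y_{2,3}=n$, the generic fiber of the resolved projection restricted to $Y_{2,3}$ has dimension $n-(n-1)=1$: it is a curve, not a cubic hypersurface of dimension at least two. Concretely, the fiber over a general $q\in\mathbb{P}^{n-1}$ is cut out in the $3$-plane $H_q=\langle \Pi,q\rangle$; since $H_q\cap Y_2$ is a quadric surface containing the plane $\Pi$ it splits as $\Pi\cup\Pi_q$, and the moving part of the fiber is the plane cubic curve $\Pi_q\cap Y_3$, generically a smooth genus-one curve. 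Such a curve is not unirational over $k(\mathbb{P}^{n-1})$ even when it carries a rational point, so Remark~\ref{QC} (Koll\'ar's theorem, which requires dimension at least two and not being a cone) is unavailable and Proposition~\ref{Enr} cannot be run on this fibration. Your dimension count ``$n\geq 3$ guarantees this cubic has dimension at least two'' is the step that fails. Relatedly, you cannot simply ``cite \cite[Theorem 2.1]{CMM07} checking that $n\geq 3$ is met'': that theorem treats only the threefold case $Y_{2,3}\subset\mathbb{P}^5$, and the entire content of the Lemma is the extension to higher $n$.

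The paper's actual argument keeps $\Pi$ but slices rather than projects: it considers the family of $5$-planes $H\subset\mathbb{P}^{n+2}$ containing $\Pi$, parametrized by a Grassmannian of dimension $3(n-3)$, and forms the incidence variety $W_{2,3}=\{(y,H): y\in H\cap Y_{2,3}\}$. The generic fiber of $W_{2,3}$ over the Grassmannian is a complete intersection of a quadric and a cubic in $\mathbb{P}^5$ over the function field $k(t_1,\dots,t_{3(n-3)})$, still containing the $2$-plane $\Pi$ with the quadric smooth at its general point; this is exactly the situation of \cite[Theorem 2.1]{CMM07}, which gives unirationality of the generic slice over that function field, hence of $W_{2,3}$ over $k$, and the first projection $W_{2,3}\to Y_{2,3}$ is dominant. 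If you want to salvage your outline, you would have to replace the projection from $\Pi$ by this slicing construction (or some other device producing fibers of the right dimension); as written the argument has a genuine gap.
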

\begin{proof}
For $n = 5$ the statement has been proven in \cite[Theorem 2.1]{CMM07}. Now, consider the incidence varieties  
$$
\begin{array}{ll}
W_2 & = \{(y,H) \: | \: y\in H\cap Y_{2}\}\subset Y_{2}\times \mathbb{G}(n-4,n-1); \\ 
W_3 & = \{(y,H) \: | \: y\in H\cap Y_{3}\}\subset Y_{3}\times \mathbb{G}(n-4,n-1); \\ 
W_{2,3} & = \{(y,H) \: | \: y\in H\cap Y_{2,3}\} = W_2\cap W_3\subset Y_{2,3}\times \mathbb{G}(n-4,n-1);
\end{array} 
$$
where $\mathbb{G}(n-4,n-1)$ is the Grassmannian of $5$-planes in $\mathbb{P}^{n+2}$ containing $\Pi$. Let $W_{2,\eta}, W_{3,\eta}, W_{2,3,\eta}$ be the generic fibers of the second projection onto $\mathbb{G}(n-4,n-1)$ from $W_2,W_3,W_{2,3}$ respectively. 

The $2$-plane $\Pi\subset Y_{2}$ yields a $2$-plane $\Pi'\subset W_{2,\eta}$ defined over $k(t_1,\dots,t_{3(n-3)})$, and since $Y_2$ is smooth at the general point of $\Pi$ we have that $W_{2,\eta}$ is smooth at the general point of $\Pi'$.

Hence, $W_{2,3,\eta}\subset \mathbb{P}^5_{k(t_1,\dots,t_{3(n-3)})}$ is a complete intersection over $k(t_1,\dots,t_{3(n-3)})$ of a quadric and a cubic satisfying the hypotheses of \cite[Theorem 2.1]{CMM07}, and so $W_{2,3,\eta}$ is unirational over the function field $k(t_1,\dots,t_{3(n-3)})$. Therefore, $W_{2,3}$ is unirational over the base field $k$, and to conclude it is enough to observe that the first projection $W_{2,3}\rightarrow Y_{2,3}$ is dominant.      
\end{proof}

\begin{Lemma}\label{hyp_ci}
Let $Q_0\subset\mathbb{P}^{n+1}$ be a smooth $(n-1)$-dimensional quadric and $X_d\subset\mathbb{P}^{n+1}$ an irreducible hypersurface of degree $d$ containing $Q_0$ with multiplicity one and otherwise general. Then there exists a complete intersection 
$$Y_{2,d-1} = Y_2\cap Y_{d-1}\subset\mathbb{P}^{n+2}$$ 
of a quadric $Y_2$ and a hypersurface $Y_{d-1}$ of degree $d-1$ such that:
\begin{itemize}
\item[(i)] $Y_{2,d-1}$ has a point $v\in Y_{2,d-1}$ of multiplicity $d-2$; and
\item[(ii)] the linear projection from $v$ yields a birational map $\pi_{v}: Y_{2,d-1}\dasharrow X_d$.
\end{itemize}
Furthermore, $Y_2$ is smooth at $v$ and $\mult_{v}Y_{d-1} = d-2$. 
\end{Lemma}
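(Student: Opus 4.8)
The plan is to realize $X_d$ as a projection of an explicit complete intersection by inverting the classical "join construction" / unprojection that already appeared (implicitly) in Lemma \ref{hyp}. Concretely, I would write $Q_0 = \{q(z_0,\dots,z_{n+1}) = 0\}$ with $q$ a nondegenerate quadratic form, and since $X_d$ contains $Q_0$ with multiplicity one, its defining equation $F$ has the shape $F = q\cdot G_{d-2} + \ell\cdot H_{d-1}$ for suitable $G_{d-2}\in k[z]_{d-2}$, $H_{d-1}\in k[z]_{d-1}$, where $\{\ell = 0\}$ is a hyperplane cutting out a quadric hypersurface $Q_0\cap\{\ell=0\}$ inside $Q_0$ along which the two "branches" of $X_d$ through $Q_0$ meet; here "otherwise general" is what guarantees that $F$ is not divisible by $q$ and that $G_{d-2}, H_{d-1}$ can be chosen with the expected generality (in particular $G_{d-2}$ not identically zero on $Q_0$).

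Next I would introduce a new variable $w$ and set, in $\mathbb{P}^{n+2}$ with coordinates $[z_0:\dots:z_{n+1}:w]$,
\[
Y_2 = \{\, \ell(z)\, w - q(z) = 0 \,\}, \qquad Y_{d-1} = \{\, w\, G_{d-2}(z) + H_{d-1}(z) = 0 \,\},
\]
and $Y_{2,d-1} = Y_2\cap Y_{d-1}$. Take $v = [0:\dots:0:1]$. One checks directly: $v\in Y_2$ and $v\in Y_{d-1}$; $Y_2$ is smooth at $v$ because the coefficient of $w$ in the equation of $Y_2$ is $\ell(z)$, which is a nonzero linear form, so $\partial/\partial(\text{some }z_i)$ is nonzero at $v$; and $\mult_v Y_{d-1} = d-2$ since the equation $wG_{d-2}(z)+H_{d-1}(z)$, expanded in powers of $w$, has lowest-degree part (in the $z$'s, localizing at $v$) equal to $G_{d-2}(z)$ of degree exactly $d-2$ — here again generality of $G_{d-2}$ ensures the multiplicity is not larger. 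Combining, $\mult_v Y_{2,d-1} = d-2$, giving (i) and the final sentence.

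For (ii): the linear projection $\pi_v\colon\mathbb{P}^{n+2}\dashrightarrow\mathbb{P}^{n+1}$ from $v$ is $[z:w]\mapsto[z]$. Restricting to $Y_{2,d-1}$, a general line through $v$ is $\{z = z^{(0)}\text{ fixed},\ w\text{ free}\}$; intersecting with $Y_2$ forces $w = q(z^{(0)})/\ell(z^{(0)})$ (a single value, for $z^{(0)}$ off the hyperplane $\{\ell=0\}$), and one must then check this point also lies on $Y_{d-1}$ exactly when $z^{(0)}\in X_d$: substituting $w = q/\ell$ into $wG_{d-2}+H_{d-1}$ and clearing $\ell$ gives $qG_{d-2} + \ell H_{d-1} = F$. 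Hence the fiber of $\pi_v|_{Y_{2,d-1}}$ over a general point of $X_d$ is a single reduced point, and the image is exactly $X_d$; since $\dim Y_{2,d-1} = \dim X_d = n$ (a codimension-two complete intersection in $\mathbb{P}^{n+2}$), $\pi_v$ is birational onto $X_d$. I would also note that $Y_{2,d-1}$ is irreducible: it dominates the irreducible $X_d$ generically one-to-one, which suffices.

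The main obstacle is the bookkeeping around genericity: one must argue that for $X_d$ general among degree-$d$ hypersurfaces containing $Q_0$ with multiplicity one, the decomposition $F = qG_{d-2} + \ell H_{d-1}$ exists with $G_{d-2}$ and $H_{d-1}$ sufficiently general (not both vanishing at $v$-direction in a degenerate way, $G_{d-2}\not\equiv 0$, etc.) so that the multiplicity of $Y_{d-1}$ at $v$ is exactly $d-2$ rather than more. This is a linear-algebra / dimension count on the space of such $F$'s: the condition "$X_d \supset Q_0$ with multiplicity one" is precisely $F\in (q) + (\ell)$ for some hyperplane $\ell$ (a choice which is itself part of the data, or can be normalized), and the residual freedom in $G_{d-2}, H_{d-1}$ is an open dense condition. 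Everything else is the direct substitution check above, which is routine once the setup is fixed.
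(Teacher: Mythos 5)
Your construction coincides with the paper's proof: writing $Q_0=\{\ell=q=0\}$, decomposing $F=qG_{d-2}+\ell H_{d-1}$, and taking $Y_2=\{\ell w-q=0\}$, $Y_{d-1}=\{wG_{d-2}+H_{d-1}=0\}$ with $v=[0:\dots:0:1]$ is exactly the paper's setup (there $\ell=z_1$, $q=Q$, $G_{d-2}=B$, $H_{d-1}=A$), and the multiplicity computations agree. The only differences are cosmetic: you verify birationality by the direct substitution $w=q/\ell$ giving a single point in the fiber over a general point of $X_d$, whereas the paper uses a degree count $2(d-1)-(d-2)=d$ together with $Y_{2,d-1}\subset CX_d$; also note the initial phrase ``$Q_0=\{q=0\}$'' is dimensionally off (that would be an $n$-dimensional quadric), though you immediately correct this by working with the ideal $(\ell,q)$.
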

\begin{proof}
We may write $Q_0 = \{z_1 = Q = 0\}$ with $Q\in k[z_0,\dots,z_{n+1}]_2$. Then $X_d$ is of the form
$$X_d = \{z_1A+BQ = 0\}$$
with $A\in k[z_0,\dots,z_{n+1}]_{d-1}$ and $B\in k[z_0,\dots,z_{n+1}]_{d-2}$. Consider the quadric $Y_2 = \{z_1u-Q = 0\}$ with $z_0,\dots,z_{n+1},u$ homogeneous coordinates on $\mathbb{P}^{n+2}$. Denote by $CX_d$, $CQ_0$ the cones respectively over $X_d$ and $Q_0$ with vertex $v = [0:\dots :0:1]$. Then $CQ_0\subset Y_2$ and 
$$Y_2\cap CX_d = CQ_0 \cup \{z_1u-Q = A+uB = 0\}.$$
Set $Y_{d-1} = \{A+uB = 0\}$ and $Y_{2,d-1} = \{z_1u-Q = A+uB = 0\}$. Note that the tangent space of $Y_2$ at $v$ is the hyperplane $\{z_1 = 0\}$, the tangent cone of $Y_{d-1}$ at $v$ is given by $\{B = 0\}$, and the tangent cone of $Y_{2,d-1}$ at $v$ is cut out by $\{z_1 = B = 0\}$. Hence 
$$\mult_{v}Y_2 = 1,\: \mult_{v}Y_{d-1} = \mult_{v}Y_{2,d-1} = d-2.$$
Therefore, if $p\in Y_{2,d-1}$ is a general point the line $\left\langle v,p\right\rangle$ intersects $Y_{2,d-1}$ just in $v$ with multiplicity $d-2$ and in $p$ with multiplicity one. So the projection $\pi_{v}:Y_{2,d-1}\dasharrow \mathbb{P}^{n+1}$ is birational onto $\overline{\pi_{v}(Y_{2,d-1})}$ which must then be a hypersurface of degree $2(d-1)-(d-2) = d$. To conclude it is enough to note that since $Y_{2,d-1}$ is not a cone of vertex $v$ and $Y_{2,d-1}\subset CX_d$ we have $\overline{\pi_{v}(Y_{2,d-1})} = X_d$.  
\end{proof}

\begin{Proposition}\label{p_bun}
Let $X_d\subset\mathbb{P}^{n+1}$ be a hypersurface of degree $d$ having multiplicity $d-2$ along an $(n-1)$-plane $\Lambda\subset\mathbb{P}^{n+1}$. Assume that there is a quadric $Q_p$ in the quadric fibration induced by the projection from $\Lambda$ such that the quadric $Q_p\cap\Lambda$ is smooth and has a point and that $X_d$ is otherwise general.

Then there exist a rational surface $S$ and a variety $W$ with a morphism onto $S$ whose general fiber is a complete intersection of a quadric $W_2$ and a degree $d-3$ hypersurface $W_{d-3}$ in $\mathbb{P}^n$. Furthermore, geometrically $W_{d-3}$ is the union of $d-3$ linear components. Finally, if $W$ is unirational then there is a dominant rational map $W\dasharrow X_d$ and hence $X_d$ is unirational as well. 
\end{Proposition}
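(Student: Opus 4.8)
\emph{Plan.} The strategy is to split the projection off $\Lambda$ into a quadric bundle, and then to use the $k$-rational quadric furnished on the special fibre to turn that quadric bundle into a fibration over a rational \emph{surface} whose fibres have the stated shape; the claimed dominant rational map to $X_d$ is produced along the way.

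\emph{Step 1 (the quadric bundle).} Choose coordinates on $\P^{n+1}$ with $\Lambda=\{z_0=z_1=0\}$. Since $\mult_\Lambda X_d=d-2$, the equation of $X_d$ lies in $(z_0,z_1)^{d-2}$, so the linear projection $[z_0:z_1]\colon\P^{n+1}\dashrightarrow\P^1$ induces on $X_d$, after blowing up $\Lambda$, the model of Lemma \ref{hyp} with $h=n-1$, $m=d-2$: the blow-up $\widetilde X_d=\Bl_\Lambda X_d$ is a divisor of fibre degree $2$ in $\mathcal T_{1,0,\dots,0}\to\P^1$, i.e.\ a quadric fibration $\pi\colon\widetilde X_d\to\P^1$ whose fibre $Q_t$ is an $(n-1)$-dimensional quadric in $\P^n$, and the exceptional divisor $\widetilde E$ meets $Q_t$ in the hyperplane section $Q_t\cap\Lambda$. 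By hypothesis there is $p\in\P^1(k)$ for which $Q_p\cap\Lambda$ is a smooth quadric with a $k$-rational point; a smooth quadric with a $k$-point is $k$-rational, so one may fix a $k$-birational parametrisation $\rho\colon\P^{n-2}\dashrightarrow Q_p\cap\Lambda$, and for general $X_d$ the ambient quadric $Q_p$ is itself smooth and $k$-rational and sits on $X_d$ with multiplicity one.

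\emph{Step 2 (the rational surface $S$ and the model $W$).} I would now upgrade the $\P^1$-fibration to a fibration over a rational surface by feeding the parametrisation $\rho$ in as a second base direction, while peeling the degree-$d$ equation of $X_d$ down to a quadric together with $d-3$ linear slices by the residual-intersection trade of Lemma \ref{hyp_ci}. Concretely: the marked point, moving over a rational curve inside $\P^{n-2}$ via $\rho$ (so over $\P^1$ essentially, giving $S$ birational to the product of $\P^1$ with that curve, hence a rational surface), comes together with its $d-3$ residual partners on the remaining fibres of $\widetilde E\to\P^1$ — the degree-$(d-2)$ binary form cutting out $\widetilde E$ splits off the root $t=p$ over the locus $Q_p\cap\Lambda$, leaving $d-3$ further roots — and running Lemma \ref{hyp_ci}'s construction fibrewise, to trade the degree-$d$ hypersurface through these quadrics for a complete intersection, produces a variety $W$ with a morphism onto $S$ whose general fibre is, inside a $\P^n$, the complete intersection $W_2\cap W_{d-3}$ of a quadric $W_2$ with a degree $d-3$ hypersurface $W_{d-3}$; since $W_{d-3}$ is produced as a cone over an $(n-2)$-plane with $d-3$ vertices, over $\overline k$ it breaks into a union of $d-3$ hyperplanes, as asserted. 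The parametrising data is generically finite onto $X_d$, so the construction comes equipped with a dominant rational map $W\dashrightarrow X_d$; composing it with a dominant map $\P^N\dashrightarrow W$ then shows that if $W$ is unirational so is $X_d$.

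\emph{Main obstacle.} The conceptual steps above are light; the real work is the explicit birational bookkeeping of Step 2: pinning down $S$ and $W$ exactly, checking that the genericity of $X_d$ is inherited at each fibrewise application of Lemma \ref{hyp_ci} so that its hypotheses remain in force, and carrying out the algebra that identifies the residual degree-$(d-3)$ factor as a cone with an $(n-2)$-dimensional vertex. Propagating the rational map $W\dashrightarrow X_d$ through the blow-ups and projections and verifying its dominance is then routine.
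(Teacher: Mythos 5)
Your Step 1 and your choice of ingredients are the right ones, and your overall architecture (trade $X_d$ for the complete intersection $Y_{2,d-1}=Y_2\cap Y_{d-1}\subset\P^{n+2}$ of Lemma \ref{hyp_ci}, then exploit the rational point of $Q_p\cap\Lambda$ to build a rational surface base) matches the paper's. But the proposition is essentially \emph{about} the object you defer to the ``main obstacle'' paragraph: the actual definition of $W$, of the morphism $W\to S$, of its fibres, and of the map $W\dasharrow X_d$. None of these is constructed in your Step 2, so there is a genuine gap rather than omitted routine bookkeeping. Concretely, the paper's $W$ is \emph{not} obtained by ``running Lemma \ref{hyp_ci} fibrewise'': Lemma \ref{hyp_ci} is applied once, globally, and the fibration is then a tangent-line construction on the resulting pair $(Y_2,Y_{d-1})$. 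One observes that $Y_{d-1}$ has multiplicity $d-3$ along the $n$-plane $H=\{z_0=z_1=0\}\subset\P^{n+2}$ and multiplicity $d-2$ at the vertex $v$; for a point $s$ on the cone $C\overline{Q}$ over $\overline{Q}=Q_p\cap\Lambda$ (note $C\overline{Q}\subset Y_2$), the lines through $s$ lying on $Y_2$ form a quadric $W_2\subset\P(T_sY_2)$, and the lines meeting $Y_{d-1}$ with multiplicity $\geq d-2$ at $s$ form a hypersurface $W_{d-3}$ cut out by a binary form of degree $d-3$ in $z_0,z_1$ (whence the $d-3$ geometric linear components). Then $S$ is the cone, with vertex $v$, over a conic $\overline{C}\subset\overline{Q}$ through the given rational point, $W=\rho^{-1}(S)$ inside this relative $W_2\cap W_{d-3}$ over $C\overline{Q}$, and the map to $X_d$ sends $(s,l_s)$ to the unique residual point of $l_s\cap Y_{d-1}$ (which lies on $Y_{2,d-1}$ because $l_s\subset Y_2$ and $\deg Y_{d-1}=d-1$), composed with $\pi_v$.

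Two further points where your sketch would not survive being made precise. First, your proposed map to $X_d$ is asserted (``the parametrising data is generically finite onto $X_d$''), but dominance is exactly what must be checked: the paper does this over $\overline{k}$ by showing that for a general point of $Y_{2,d-1}$ only finitely many pairs $(s,l_s)$ map to it, and then comparing dimensions ($\dim W=\dim Y_{2,d-1}=n$). Without specifying $W$ one cannot even compute $\dim W$, and the naive family over all of $C\overline{Q}$ has dimension $2n-3>n$, which is precisely why one must cut down to a surface $S$ before a generically finite dominant map can exist. Second, your identification of the degree-$(d-3)$ piece via ``the binary form cutting out $\widetilde E$ splitting off the root $t=p$'' is not the mechanism at work: the relevant binary form of degree $d-3$ arises from the multiplicity of $Y_{d-1}$ along $H$, i.e.\ from the local intersection of a line with $Y_{d-1}$ at a point of $H$, not from factoring the equation of the exceptional divisor of $\Bl_\Lambda X_d$. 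As written, Step 2 records the intended shape of the answer but does not produce $S$, $W$, or the map $W\dasharrow X_d$, so the proof is incomplete.
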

\begin{proof}
We may write $\Lambda = \{z_0 = z_1 = 0\}$ and
$$X_d = \{z_0^{d-2}A_1 +z_{0}^{d-3}z_1A_2+\dots +z_1^{d-2}A_{d-1} = 0\}$$
with $A_i \in k[z_0,\dots,z_{n+1}]_2$. Note that $X_d$ contains the smooth $(n-1)$-dimensional quadric $Q_p = \{z_1 = A_1 = 0\}$. Hence by Lemma \ref{hyp_ci} there is an irreducible complete intersection $Y_{2,d-1}\subset\mathbb{P}^{n+2}$ which is birational to $X_d$. 

Consider the quadric $Y_2 = \{z_1u-A_1 = 0\}\subset\mathbb{P}^{n+2}$ with homogeneous coordinates $z_0,\dots,z_{n+1},u$, and let $CX_d$ be the cone over $X_d$ as in the proof of Lemma \ref{hyp_ci}. The intersection $Y_2\cap CX_d$ has two components: the cone $CQ_p$ over $Q_p$ and the degree $d-1$ hypersurface
$$Y_{d-1} = \{z_0^{d-2}u+z_0^{d-3}A_2+z_0^{d-4}z_1A_3+\dots +z_1^{d-3}A_{d-1} = 0\}.$$
Set $v = [0:\dots :0:1]$ and $H = \{z_0 = z_1 = 0\}\subset\mathbb{P}^{n+2}$. Then
$$\mult_{v}Y_{d-1} = d-2,\: \mult_{H}Y_{d-1} = d-3.$$  
Take a general point $p\in H$. The lines through $p$ that intersect $Y_{d-1}$ at $p$ with multiplicity at least $d-2$ are parametrized by a hypersurface $W_{d-3}$ cut out in the $(n+1)$-dimensional projective space $\mathbb{P}(T_p\mathbb{P}^{n+2})$ of lines through $p$ by a polynomial in $k[z_0,z_1]_{d-3}$. 

Now, consider the cone $C\overline{Q}$ over the $(n-2)$-dimensional quadric $\overline{Q} = \{z_0 = z_1 = A_1 = 0\}$, and take a general point $p \in C\overline{Q}$. Note that $C\overline{Q}\subset Y_2$. The lines through $p$ that are contained in $Y_2$ are parametrized by a quadric hypersurface $W_2\subset\mathbb{P}(T_pY_2)$. 

Let $\mathcal{F} = \mathcal{T}_{Y_2|C\overline{Q}}$ be the restriction of the tangent sheaf of $Y_2$ to $C\overline{Q}$. Summing-up there is a subvariety $W_{2,d-3}\subset\mathbb{P}(\mathcal{F})$ with a surjective morphism $\rho:W_{2,d-3}\rightarrow C\overline{Q}$ whose fiber over a general point of $p\in C\overline{Q}$ is a complete intersection of a quadric and a hypersurface of degree $d-3$ in $\mathbb{P}^{n}$. Hence, $\dim(W_{2,d-3}) = 2n-3$.

By hypothesis $\overline{Q}$ has a point. Let $\overline{C}$ be a conic in $\overline{Q}$ through this point and $S$ the cone over $\overline{C}$ with vertex $v$. Then $S\subset C\overline{Q}$ is a rational surface. Set $W = \rho^{-1}(S)$ and $W_s = \rho^{-1}(s)$ for $s\in S$. A general point $w\in W$ represents a pair $(s,l_s)$ where $s\in S$ and $l_s$ is a line through $s$ which is contained in $Y_2$ and intersects $Y_{d-1}$ with multiplicity $d-2$ at $s$. Since $\deg(Y_{d-1}) = d-1$ the line $l_s$ intersects $Y_{d-1}$ just at one more point $x_{(s,l_s)}\in Y_2 \cap Y_{d-1} = Y_{2,d-1}$ and we get a rational map
$$
\begin{array}{cccc}
\psi: & W & \dasharrow & Y_{2,d-1}\\
 & (s,l_s) & \longmapsto & x_{(s,l_s)}.
\end{array}
$$
If $W$ is unirational in order to prove that $\psi$ is dominant it is enough to prove that the induced map $\overline{\psi}:\overline{W}\dasharrow\overline{Y}_{2,d-1}$ between the algebraic closures is dominant. Take a general point $p \in \overline{Y}_{2,d-1}$ and assume that $x_{(s,l_s)} = p$. Then $l_s$ lies in the tangent space of $Y_2$ at $p$ which is given by $\{L = 0\}$. Such tangent space intersects $\overline{S}$ in a conic, and further intersecting with $W_{d-3}$ we see that there are finitely may points $s\in \overline{S}$ such that $x_{(s,l_s)} = p$ for some $l_s\in W_s$. Furthermore, if $x_{(s,l_s)} = x_{(s,l_s')}$ then $l_s = l_s'$. Hence, $\overline{\psi}$ is generically finite and since $\dim(W) = \dim(Y_{2,d-1})$ we conclude that $\overline{\psi}$ is dominant. 

Finally, let $\pi_v:Y_{2,d-1}\dasharrow X_d$ be the dominant rational map in Lemma \ref{hyp_ci}. By considering the composition 
$$
\begin{tikzcd}
W \arrow[rr, "\psi", dashed] \arrow[rrd, "g", dashed] &  & {Y_{2,d-1}} \arrow[d, "\pi_v", dashed] \\
                                                       &  & X_{d}                                 
\end{tikzcd}
$$
we get a dominant rational map $g:W\dasharrow X_d$, and hence $X_d$ is unirational. 
\end{proof}

\begin{Proposition}\label{Y23s}
Let $Y_{2,3} = Y_2\cap Y_3\subset\mathbb{P}^{n+2}$ be a complete intersection of a quadric and a cubic of the following form
$$
Y_{2,3} =\{uz_1 - A = z_0^2u+z_0B+z_1C = 0\}
$$
with $A,B,C\in k[z_0,\dots,z_{n+1}]_2$ general. If the quadric $\overline{Q} = \{z_0 = z_1 = A = 0\}$ is smooth and has a point then $Y_{2,3}$ is unirational. 
\end{Proposition}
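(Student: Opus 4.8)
The plan is to identify $Y_{2,3}$ with exactly the complete intersection $Y_{2,d-1}$ that, via Lemma \ref{hyp_ci}, the proof of Proposition \ref{p_bun} attaches to a quartic with a double $(n-1)$-plane, and then to establish, for $d=4$, the one ingredient that proposition leaves open: the unirationality of the auxiliary variety $W$.

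First I would eliminate $u$. On $Y_2 = \{uz_1 = A\}$ the cubic equation gives, after clearing $z_1$, the quartic
$$X_4 = \{z_0^2A + z_0z_1B + z_1^2C = 0\}\subset\mathbb{P}^{n+1},$$
which has multiplicity two along the $(n-1)$-plane $\Lambda = \{z_0 = z_1 = 0\}$; in fact $Y_{2,3}$ is precisely the variety $Y_{2,d-1}$ of Lemma \ref{hyp_ci} for this $X_4$ (take $d=4$ with $Q_0 = \{z_1 = A = 0\}$, so that $A_1 = A$, $A_2 = B$, $A_3 = C$), and $\pi_v\colon Y_{2,3}\dashrightarrow X_4$ is birational. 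The quadric $Q_p = \{z_1 = A = 0\}$ is the fibre over $[1:0]$ of the quadric fibration on $X_4$ induced by the projection from $\Lambda$, and $Q_p\cap\Lambda = \{z_0 = z_1 = A = 0\} = \overline{Q}$, which by hypothesis is smooth and has a point; and since $A,B,C$ are general, $X_4$ satisfies the genericity assumption of Proposition \ref{p_bun}. Hence that proposition applies and it suffices to prove that the variety $W$ it produces is unirational.

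For $d=4$, in the notation of the proof of Proposition \ref{p_bun}, $W = \rho^{-1}(S)$, where $S$ is the cone with vertex $v = [0:\dots:0:1]$ over a conic $\overline{C}\subset\overline{Q}$ through the chosen point of $\overline{Q}$, and the general fibre $\rho^{-1}(p)$ of $\rho\colon W\to S$ is the quadric, of dimension $n-2$, parametrizing the lines through $p$ contained in $Y_2$ and meeting $Y_3 = \{z_0^2u + z_0B + z_1C = 0\}$ with multiplicity at least two at $p$ (for $d=4$ the degree-$(d-3)$ hypersurface $W_{d-3}$ is a single hyperplane, so $\rho^{-1}(p)$ is a hyperplane section of a quadric in $\mathbb{P}^n$). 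The key observation I would use is that for a general $p\in S$ the ruling line $\ell_p = \langle v,\overline{p}\rangle$ of the cone $S$ through $p$, with $\overline{p}\in\overline{C}$ the image of $p$ under projection from $v$, satisfies $z_0 = z_1 = 0$ identically, because both $v$ and $\overline{p}\in\overline{Q}$ do; hence $\ell_p\subset Y_2$ (as $\ell_p\subset S\subset C\overline{Q}\subset Y_2$) and $\ell_p\subset Y_3$ (the equation of $Y_3$ vanishes on $\{z_0 = z_1 = 0\}$), so $[\ell_p]\in\rho^{-1}(p)$. Thus $p\mapsto[\ell_p]$ is a $k$-rational section of $W\to S$, so the generic fibre of $W\to S$ is a quadric over $k(S)$ with a $k(S)$-rational point, which for general $A,B,C$ is a smooth quadric, hence rational over $k(S)$; since $S$ is a rational surface, $W$ is rational, in particular unirational. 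By the last assertion of Proposition \ref{p_bun} the quartic $X_4$ is unirational, and since $Y_{2,3}$ is birational to $X_4$ via $\pi_v$, it too is unirational.

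The step I expect to be the main obstacle is the one in the previous paragraph: one must check that the ruling lines of $S$ genuinely describe a section of $W\to S$ meeting the fibres at a point where "rational point $\Rightarrow$ rational" really applies, i.e.\ that after choosing the point of $\overline{Q}$ — which can be taken general among its $k$-points, since $\overline{Q}$, having a $k$-point, is $k$-rational — and the conic $\overline{C}$ general enough that $S$ avoids the discriminant of $\rho$, the fibre $\rho^{-1}(p)$ over a general $p\in S$ is a smooth irreducible quadric of dimension $n-2$ containing $[\ell_p]$. The rest is bookkeeping inherited from Proposition \ref{p_bun} and Lemma \ref{hyp_ci}.
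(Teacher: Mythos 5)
Your proposal is correct and follows essentially the same route as the paper: both reduce to the quadric bundle $W\to S$ of Proposition \ref{p_bun} over the cone $S$ on a conic in $\overline{Q}$, observe that the ruling lines of $S$ (being contained in $\{z_0=z_1=0\}\subset Y_2\cap Y_3$) give a rational section making $W$ rational, and conclude via the dominant map to $Y_{2,3}$. Your write-up merely makes explicit why the ruling lines land in the fibres and routes the conclusion through the quartic $X_4$ rather than mapping $W$ directly onto $Y_{2,3}$, which is an immaterial difference.
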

\begin{proof}
Take a point $p\in \overline{Q}$ and a general conic $\overline{C}\subset \overline{Q}$ through $p$. Let $S$ be the cone over $\overline{C}$ with vertex $v = [0:\dots:0:1]$. 

By Proposition \ref{p_bun} there exists a variety $W$ with a morphism onto $S$ whose general fiber is a complete intersection of a quadric and a hyperplane. Hence $W$ has a structure of quadric bundle $W\rightarrow S$ over $S$ with $(n-2)$-dimensional quadrics as fibers. Furthermore, the lines in $S$ yield a rational section of $W\rightarrow S$ and so $W$ is rational. 

Set $Y_2 = \{uz_1 - A = 0\}$ and $Y_3 = \{z_0^2u+z_0B+z_1C = 0\}$. Note that $S$ is contained in $Y$ and in the intersection of $Y_3$ with its tangent cone at $v$. By the proof of Proposition \ref{p_bun} a general point of $W$ corresponds to a pair $(s,l_s)$ with $s\in S$ and $l_s$ a line in $Y_2 = \{uz_1 - A = 0\}$ passing through $s\in S$ and intersecting $Y_3 = \{z_0^2u+z_0B+z_1C = 0\}$ with multiplicity two at $s\in S$. Associating to $(s,l_s)\in W$ the third point of intersection of $l_s$ with $Y_3$ as in the proof of Proposition \ref{p_bun} we get a dominant rational map $W\dasharrow Y_{2,3}$ and hence $Y_{2,3}$ is unirational. 
\end{proof}

\begin{Corollary}\label{nge3}
Let $X_4\subset\mathbb{P}^{n+1}$ be a quartic hypersurface having multiplicity two along an $(n-1)$-plane $\Lambda\subset\mathbb{P}^{n+1}$ with $n\geq 3$. Assume that there is a quadric $Q_p$ in the quadric fibration induced by the projection from $\Lambda$ such that the quadric $Q_{\Lambda} = Q_p\cap\Lambda$ is smooth and has a point and that $X_4$ is otherwise general. Then $X_4$ is unirational. 
\end{Corollary}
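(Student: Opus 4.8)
The plan is to read the statement off as the case $d=4$ of Proposition \ref{p_bun}, the essential point being that for quartics the auxiliary variety produced there is a quadric bundle over a rational surface, hence rational.

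First I would check that the hypotheses of Proposition \ref{p_bun} are met with $d=4$: a quartic with double points along the $(n-1)$-plane $\Lambda$ has multiplicity $d-2=2$ along $\Lambda$, and a quadric $Q_p$ of the fibration induced by the projection from $\Lambda$ with $Q_{\Lambda}=Q_p\cap\Lambda$ smooth and carrying a $k$-rational point is precisely the datum required there (in the notation of that proof the relevant quadric is $\overline{Q}=\{z_0=z_1=A_1=0\}=Q_p\cap\Lambda$). Proposition \ref{p_bun} then provides a rational surface $S$ — the cone with vertex $v=[0:\dots:0:1]$ over a conic in $Q_{\Lambda}$ through the rational point — together with a variety $W$ and a surjective morphism $\rho\colon W\to S$ whose general fibre is a complete intersection in $\mathbb{P}^{n}$ of a quadric $W_2$ and a hypersurface $W_{d-3}$ of degree $d-3=1$, that is a hyperplane. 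Thus the general fibre of $\rho$ is an $(n-2)$-dimensional quadric and $\rho\colon W\to S$ is a quadric bundle over $S$.

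Next I would prove that $W$ is rational, following the argument in the proof of Proposition \ref{Y23s}. The ruling lines of the cone $S$ pass through $v$ and are contained in the degree $d-1=3$ hypersurface $Y_{d-1}$ appearing in the construction, so they define a rational section of $\rho$; since $n\geq 3$ this section is a proper (two-dimensional) subvariety of the $n$-dimensional $W$, hence it avoids the general point of $W$. The generic fibre of $\rho$ is therefore a quadric over $k(S)$ with a $k(S)$-rational point, so it is rational over $k(S)$; and $k(S)$ is purely transcendental over $k$ because $S$ is rational. Consequently $W$ is rational, and in particular unirational.

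Finally, the last assertion of Proposition \ref{p_bun} upgrades the unirationality of $W$ to the existence of a dominant rational map $W\dasharrow X_4$, and hence $X_4$ is unirational. The substantive work is all contained in Proposition \ref{p_bun}: the construction of the quadric bundle $\rho\colon W\to S$ and of the dominant map $W\dasharrow X_4$. What is particular to the quartic case is the numerology $d-3=1$, which forces the fibres of $\rho$ to be quadrics rather than higher-degree complete intersections, together with the existence of the rational section of $\rho$; this is where the hypothesis that $Q_{\Lambda}$ has a $k$-rational point enters, ensuring that $S$, and with it $W$, is rational over $k$.
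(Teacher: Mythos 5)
Your proposal is correct and takes essentially the same route as the paper: the paper deduces the corollary from Lemma \ref{hyp_ci} and Proposition \ref{Y23s}, and the latter is precisely the $d=4$ instance of Proposition \ref{p_bun} together with the rationality of the quadric bundle $W\to S$ via the section given by the ruling of the cone $S$ — exactly the argument you inline.
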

\begin{proof}
Up to a change of coordinates we may assume that $\Lambda = \{z_0 = z_1 = 0\}$, $X_4$ is given by
$$
X_4 = \{z_0^2A+z_0z_1B+z_1^2C = 0\}\subset\mathbb{P}^{n+1}
$$
and $Q_{\Lambda} = \{z_1 = A = 0\}$. By Lemma \ref{hyp_ci} there exist a complete intersection of a quadric and a cubic $Y_{2,3}$ and a dominant rational map $Y_{2,3}\dasharrow X_4$. To conclude it is enough to note that $Y_{2,3}$ is a complete intersection of the form covered by Proposition \ref{Y23s}.
\end{proof}

\begin{Corollary}
Let $X_4\subset\mathbb{P}^{n+1}$, with $n\geq 5$, be a quartic hypersurface containing an $(n-1)$-dimensional quadric $Q$ which contains a $2$-plane and otherwise general. Then $X_4$ is unirational. 
\end{Corollary}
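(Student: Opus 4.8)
The plan is to pass, via Lemma~\ref{hyp_ci}, to a complete intersection of a quadric and a cubic in $\mathbb{P}^{n+2}$ and then run the argument of Lemma~\ref{Enr_gen}. First I would apply Lemma~\ref{hyp_ci} with $Q_0=Q$: for $X_4$ general the quadric $Q$, being a general $(n-1)$-dimensional quadric containing a $2$-plane $\Pi$, is smooth (this is where $n\geq 5$ enters, since a smooth quadric in the linear span $\langle Q\rangle\cong\mathbb{P}^n$ can contain a $2$-plane only for $n\geq 5$). Lemma~\ref{hyp_ci} then produces an irreducible complete intersection $Y_{2,3}=Y_2\cap Y_3\subset\mathbb{P}^{n+2}$ of a quadric $Y_2$ and a cubic $Y_3$, together with a point $v$ of multiplicity $2$ at which $Y_2$ is smooth, such that the linear projection $\pi_v\colon Y_{2,3}\dasharrow X_4$ from $v$ is birational. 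Writing $Q=\{z_1=Q'=0\}$ and $X_4=\{z_1A+BQ'=0\}$ as in that proof, one has $Y_2=\{z_1u-Q'=0\}\subset\mathbb{P}^{n+2}$ and $v=[0:\dots:0:1]$.

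Next I would locate a $2$-plane in $Y_2$ along which $Y_2$ is smooth. Since every point of $Q$ satisfies $z_1=Q'=0$, the cone $CQ_0$ over $Q$ with vertex $v$ is contained in $Y_2$, so $\Pi\subset Q\subset CQ_0\subset Y_2$. At a point $p\in Q$ one has $z_1(p)=u(p)=0$ (as $Q\subset\langle Q\rangle=\{u=0\}$), and the gradient of $z_1u-Q'$ at $p$ then equals $-\nabla_z Q'(p)$, which is nonzero because $Q$ is smooth; hence $Y_2$ is smooth along $Q$, in particular at the general point of $\Pi$. Note also that $v\notin\Pi$, since $\Pi\subset\{u=0\}$ while $u(v)\neq 0$.

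Now, since $n\geq 5\geq 3$ and $Y_2\subset\mathbb{P}^{n+2}$ contains the $2$-plane $\Pi$ and is smooth at its general point, I would run the argument of Lemma~\ref{Enr_gen}: forming the incidence variety of $5$-planes through $\Pi$ inside $\mathbb{P}^{n+2}$ and passing to its generic fiber over the relevant Grassmannian reduces the problem to a complete intersection $W_{2,3,\eta}$ of a quadric and a cubic in $\mathbb{P}^5$ over a function field, with a $2$-plane in the quadric part along which the quadric is smooth, to which \cite[Theorem~2.1]{CMM07} applies; unirationality of $W_{2,3,\eta}$ then gives unirationality of the incidence variety and hence of $Y_{2,3}$, and therefore of $X_4$ via $\pi_v$.

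The step that needs care --- and the only real obstacle --- is that $Y_{2,3}$ is singular at $v$, whereas Lemma~\ref{Enr_gen} is stated for smooth complete intersections, so it cannot be quoted verbatim. The point is that the reduction in its proof uses only that the generic $5$-plane section of $Y_{2,3}$ through $\Pi$ is a smooth complete intersection in $\mathbb{P}^5$. Here $v\notin\Pi$ and $n+2>5$, so the generic $5$-plane containing $\Pi$ avoids $v$; and for $X_4$ general one checks that $Y_{2,3}$ is smooth away from $v$ and along $\Pi$, so a Bertini argument shows that the generic such section is indeed a smooth complete intersection of a quadric and a cubic in $\mathbb{P}^5$. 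Recording these transversality statements carefully (using the genericity of $A$, $B$ and $Q'$) is routine, and with them in hand the argument of Lemma~\ref{Enr_gen} goes through and yields the unirationality of $X_4$.
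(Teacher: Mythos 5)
Your proposal follows the paper's own proof exactly: apply Lemma \ref{hyp_ci} to pass to a complete intersection $Y_{2,3}=Y_2\cap Y_3\subset\mathbb{P}^{n+2}$, note that $Y_2$ inherits the $2$-plane $\Pi$ from $Q$ (via the cone $CQ_0\subset Y_2$) and is smooth at its general point, and conclude by Lemma \ref{Enr_gen}. You are in fact more careful than the paper, whose two-line proof cites Lemma \ref{Enr_gen} verbatim even though $Y_{2,3}$ is singular at the vertex $v$; your remark that $v\notin\Pi$, so the generic $5$-plane through $\Pi$ avoids $v$ and cuts a smooth complete intersection to which \cite[Theorem 2.1]{CMM07} applies, is precisely the justification the paper leaves implicit.
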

\begin{proof}
By Lemma \ref{hyp_ci} $X_4$ is birational to a complete intersection $Y_{2,3} = Y_2\cap Y_3$. Furthermore, the proof of Lemma \ref{hyp_ci} shows that since $Q$ contains a $2$-plane $Y_2$ contains a $2$-plane as well. Hence, we conclude by Lemma \ref{Enr_gen}. 
\end{proof}

\begin{Corollary}\label{z}
Let $X_4\subset\mathbb{P}^{n+1}$, with $n\geq 2$, be a quartic hypersurface having multiplicity two along an $(n-1)$-dimensional quadric $Q$, with a point and otherwise general. Then $X_4$ is unirational. 
\end{Corollary}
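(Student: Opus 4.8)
The plan is to reduce $X_4$ birationally to a cubic hypersurface and then invoke Kollár's theorem as recalled in Remark \ref{QC}. First I would choose coordinates so that $Q=\{z_0=q=0\}$ with $q\in k[z_1,\dots,z_{n+1}]_2$ nondegenerate. Since $X_4$ has multiplicity two along the smooth codimension two subvariety $Q$, its defining polynomial lies in $(z_0,q)^2$, so
$$X_4=\{z_0^2A+z_0qB+q^2C=0\},\qquad A\in k[z]_2,\ B\in k[z]_1,\ C\in k.$$
If $X_4$ is general then $C\neq 0$ (otherwise $X_4=z_0(z_0A+qB)$ is reducible), so after rescaling $C=1$.

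Next I would introduce a coordinate $u$ on $\mathbb{P}^{n+2}$ and consider the smooth quadric $Y_2=\{z_0u-q=0\}$ together with the linear projection $\pi_v$ from $v=[0:\dots:0:1]\in Y_2$, which is birational (its inverse being induced by $[z]\mapsto[z:q(z_1,\dots,z_{n+1})/z_0]$). Substituting $q=z_0u$ in the equation of $X_4$ yields $z_0^2(u^2+uB+A)$, so that $\pi_v^{-1}(X_4)$ splits, on $Y_2$, into twice the cone $\{z_0=q=0\}\cap Y_2$ over $Q$ (which $\pi_v$ contracts to $Q$) and the residual component
$$Y_{2,2}:=Y_2\cap\{u^2+uB+A=0\}\subset\mathbb{P}^{n+2},$$
a complete intersection of two quadrics of dimension $n$ which is birational to $X_4$ via $\pi_v$. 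For $X_4$ general, $Y_{2,2}$ is irreducible, generically smooth, and not a cone.

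Now I would bring in the $k$-rational point: taking the given point of $X_4$ general, hence off $Q$, it lifts through $\pi_v^{-1}$ to a smooth $k$-point $\widetilde p\in Y_{2,2}$. Projecting $Y_{2,2}$ from $\widetilde p$, a general line through $\widetilde p$ meets $Y_{2,2}\subseteq Y_2$ in at most one further point (it already meets the quadric $Y_2$ in only two points), so $\pi_{\widetilde p}\colon Y_{2,2}\dasharrow X_3$ is birational onto a hypersurface $X_3\subset\mathbb{P}^{n+1}$ of degree $\deg Y_{2,2}-\mult_{\widetilde p}Y_{2,2}=4-1=3$. Writing $Y_{2,2}=\{Q_1=Q_2=0\}$ and letting $B_i$ denote the polar form of $Q_i$, one computes that in $\mathbb{P}(T_{\widetilde p}\mathbb{P}^{n+2})$ the cubic is $X_3=\{B_2(\widetilde p,-)\,Q_1-B_1(\widetilde p,-)\,Q_2=0\}$, which visibly contains the $(n-1)$-plane $\{B_1(\widetilde p,-)=B_2(\widetilde p,-)=0\}=\mathbb{P}(T_{\widetilde p}Y_{2,2})$, defined over $k$. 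Hence $X_3$ is a cubic hypersurface of dimension $n\geq 2$, not a cone, with a smooth $k$-point (for $X_4$ general this plane is not contained in $\Sing(X_3)$), so $X_3$ is unirational by Remark \ref{QC}; therefore $Y_{2,2}$, and with it $X_4$, is unirational.

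The delicate points are all in the middle step: checking that the component of $\pi_v^{-1}(X_4)$ dominating $X_4$ is exactly the $(2,2)$-complete intersection $Y_{2,2}$, that the linear projection of $Y_{2,2}$ from a smooth $k$-point is birational onto a cubic containing an $(n-1)$-plane over $k$, and that the genericity used along the way (irreducibility and generic smoothness of $Y_{2,2}$, $X_3$ not a cone, $\Sing(X_3)$ not containing the plane, and the $k$-point of $X_4$ lying off $Q$) indeed holds when $X_4$ is general in the required sense.
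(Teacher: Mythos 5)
Your argument is correct, and its first half is exactly the paper's (unstated) reduction: the substitution $q=z_0u$ on the quadric $Y_2=\{z_0u-q=0\}$, splitting off twice the cone over $Q$ and leaving the residual complete intersection of two quadrics $Y_{2,2}\subset\mathbb{P}^{n+2}$ birational to $X_4$ under $\pi_v$ --- this is precisely the ``slight modification of Lemma \ref{hyp_ci}'' the paper alludes to. Where you diverge is in the second half: the paper simply quotes \cite[Proposition 2.3]{CSS87} for the unirationality of an intersection of two quadrics of dimension $\geq 2$ with a rational point, whereas you re-derive the needed case by hand, projecting $Y_{2,2}$ from the lifted point $\widetilde p$ onto a cubic hypersurface $\{B_2(\widetilde p,-)Q_1-B_1(\widetilde p,-)Q_2=0\}$ containing the $k$-rational $(n-1)$-plane $\mathbb{P}(T_{\widetilde p}Y_{2,2})$, and then invoking Koll\'ar via Remark \ref{QC}. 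This classical projection argument is sound (the key point, which you correctly isolate, is that a secant line of $Y_{2,2}$ through $\widetilde p$ not lying on the smooth quadric $Y_2$ meets $Y_2$, hence $Y_{2,2}$, in no third point, so the projection is birational onto a cubic), and it has the virtue of being self-contained and of feeding directly into the same black box (Koll\'ar's theorem) already used elsewhere in the paper; the citation of \cite{CSS87} is shorter and avoids the extra genericity checks (irreducibility of the cubic, not a cone, the plane not contained in its singular locus) that you rightly flag. One small caution: you write ``taking the given point of $X_4$ general, hence off $Q$'' --- the point is part of the data and cannot be chosen, so what you actually need is that the given point lies off $Q=X_4\cap\{z_0=0\}$, which is the indeterminacy locus of $\pi_v^{-1}$; a point on $Q$ is a singular point of $X_4$ and does not obviously lift to a $k$-point of $Y_{2,2}$. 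This caveat is equally present in the paper's own one-line proof (which also needs the point to transfer to the complete intersection), so it is not a defect of your route specifically, but it deserves an explicit sentence rather than an appeal to generality of the point.
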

\begin{proof}
Slightly modifying the proof of Lemma \ref{hyp_ci} we see that in this case $X_4$ is birational to a complete intersection $Y_2\cap Y_2'\subset\mathbb{P}^{n+2}$ of two quadrics and hence the claim follows from \cite[Proposition 2.3]{CSS87}. 
\end{proof}

In the following we will investigate the unirationality of quartic hypersurfaces by constructing explicit birational maps to divisors in products of projective spaces. In particular, we will get an improvement of Corollary \ref{nge3} when $n\geq 4$.

\begin{Lemma}\label{l0}
Consider the hypersurface 
$$
X_d = \{z_0^{d-2}(z_0L+Q) + z_0^{d-3}z_1 A_1+\dots + z_0z_1^{d-3}A_{d-3}+z_1^{d-2}A_{d-2} = 0\}\subset\mathbb{P}^{n+1}_{(z_0,\dots,z_{n+1})}
$$
where $A_i = A_i^2 + z_0A_i^1+z_0^2A_i^0$, and $L,A_i^1\in k[z_1,\dots,z_{n+1}]_1$, $Q,A_i^2\in k[z_1,\dots,z_{n+1}]_2$, $A_i^0\in k$ for $i = 1,\dots,d-2$. Then $X_d$ is birational to the divisor
$$
Y_{(d-1,2)} = \left\{\sum_{i=0}^{d-1}x_0^{d-1-i}x_1^iB_i = 0\right\}\subset\mathbb{P}^1_{(x_0,x_1)}\times \mathbb{P}^{n}_{(w_1,\dots, w_{n+1})}
$$
of bidegree $(d-1,2)$, where $B_0 = w_1L+A_1^0w_1^2$, $B_1 = Q+w_1A_1^1+w_1^2A_2^0$, $B_j = A_{j-1}^2+w_1A_j^1+w_1^2A_{j+1}^0$ for $j = 2,\dots,d-3$, $B_{d-2} = A_{d-3}^2+w_1A_{d-2}^1$, $B_{d-1} = A_{d-2}^2$.
\end{Lemma}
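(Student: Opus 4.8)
I would prove the lemma by writing down an explicit birational map between the ambient spaces $\mathbb{P}^{n+1}$ and $\mathbb{P}^1\times\mathbb{P}^n$ and checking that it carries $X_d$ onto $Y_{(d-1,2)}$. The map I have in mind is
\[
\Phi\colon\mathbb{P}^{n+1}_{(z_0,\dots,z_{n+1})}\dashrightarrow\mathbb{P}^1_{(x_0,x_1)}\times\mathbb{P}^n_{(w_1,\dots,w_{n+1})},\qquad (z_0:\dots:z_{n+1})\longmapsto\big((z_0:z_1),\,(z_1:z_2:\dots:z_{n+1})\big),
\]
with inverse
\[
\Psi\colon\mathbb{P}^1\times\mathbb{P}^n\dashrightarrow\mathbb{P}^{n+1},\qquad \big((x_0:x_1),(w_1:\dots:w_{n+1})\big)\longmapsto(x_0w_1:x_1w_1:x_1w_2:\dots:x_1w_{n+1}).
\]
A direct check gives $\Phi\circ\Psi=\mathrm{id}$ and $\Psi\circ\Phi=\mathrm{id}$ on dense opens, so $\Phi$ is birational; note also that $\Psi$ contracts the two divisors $\{x_1=0\}$ and $\{w_1=0\}$.

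The computational core is to substitute $z_0=x_0w_1$ and $z_i=x_1w_i$ $(1\le i\le n+1)$ into the defining polynomial $F$ of $X_d$. It is convenient to set $A_0:=z_0L+Q$, i.e.\ $A_0^2=Q$, $A_0^1=L$, $A_0^0=0$, so that $F=\sum_{i=0}^{d-2}z_0^{\,d-2-i}z_1^{\,i}A_i$ uniformly, with $A_i=A_i^2+z_0A_i^1+z_0^2A_i^0$. Because $A_i^2$ and $A_i^1$ are homogeneous of degrees $2$ and $1$ in $z_1,\dots,z_{n+1}$ and $A_i^0$ is a scalar, the substitution sends $A_i$ to $x_1^2A_i^2(w)+x_0x_1w_1A_i^1(w)+x_0^2w_1^2A_i^0$ and $z_0^{\,d-2-i}z_1^{\,i}$ to $x_0^{\,d-2-i}x_1^{\,i}w_1^{\,d-2}$; summing over $i$ one gets that $F\circ\Psi$ equals $w_1^{\,d-2}$ times a degree-$d$ form in $x_0,x_1$ whose coefficients are among the $A_i^2(w)$, $w_1A_i^1(w)$, $w_1^2A_i^0$. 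Since $A_0^0=0$ this form is divisible by $x_1$, and extracting $x_1$ and regrouping the three families of terms by the exponent of $x_1$ — the index shift placing $A_i^2$ in the coefficient of $x_0^{d-2-i}x_1^{i+1}$, $A_i^1$ in that of $x_0^{d-1-i}x_1^i$, and $A_i^0$ in that of $x_0^{d-i}x_1^{i-1}$ — reproduces exactly the polynomials $B_0,\dots,B_{d-1}$ of the statement, the endpoints $B_0,B_1,B_{d-2},B_{d-1}$ being precisely the cases where one or two of the three families fall out of range. In other words $F\circ\Psi=w_1^{\,d-2}\,x_1\,G$ with $G=\sum_{j=0}^{d-1}x_0^{\,d-1-j}x_1^{\,j}B_j$ the defining polynomial of $Y_{(d-1,2)}$.

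The conclusion then follows formally from this polynomial identity. For $p\in Y_{(d-1,2)}$ with $x_1w_1\ne 0$ one has $\Psi(p)\in X_d$; conversely, evaluating $F\circ\Psi=w_1^{d-2}x_1G$ at $x_0=z_0$, $x_1=w_1=z_1$, $w_i=z_i$ gives $z_1F(z)=G(\Phi(z))$, so $z\in X_d$ with $z_1\ne 0$ implies $\Phi(z)\in Y_{(d-1,2)}$. Since $z_1\nmid F$ and $x_1,w_1\nmid G$ for $X_d$ in the stated form (e.g.\ the summand $z_0^{d-2}Q$ of $F$ is not divisible by $z_1$, $G|_{x_1=0}=x_0^{d-1}B_0\ne 0$, and $B_{d-1}=A_{d-2}^2$ restricts nontrivially to $\{w_1=0\}$), the rational maps $\Phi|_{X_d}$ and $\Psi|_{Y_{(d-1,2)}}$ are dominant and mutually inverse, hence $X_d$ and $Y_{(d-1,2)}$ are birational.

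The one genuinely nontrivial step is guessing the correct map $\Phi$; everything after that is bookkeeping, the only delicate point being to carry out the reindexing carefully enough that the coefficients land on the prescribed $B_j$ at the two ends of the range. It is worth recording explicitly that the spurious factors $w_1^{\,d-2}$ and $x_1$ appearing in $F\circ\Psi$ are (powers of) the equations of the two divisors contracted by $\Psi$, which is exactly why the strict transform of $X_d$ is genuinely $Y_{(d-1,2)}$ and no component of $X_d$ is lost.
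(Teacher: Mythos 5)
Your proof is correct and, despite the different packaging, is essentially the paper's argument: the paper realizes the very same birational map $z\mapsto\bigl((z_0:z_1),(z_1:\cdots:z_{n+1})\bigr)$ by factoring it through the Cremona-type transformation $\varphi:z\mapsto[z_0L:z_1^2:z_1z_2:\cdots:z_1z_{n+1}]$ followed by the substitution $w_0=\tfrac{x_0}{x_1}L$, whereas you write the composite substitution $z_0=x_0w_1$, $z_i=x_1w_i$ directly. Your bookkeeping of the coefficients $B_0,\dots,B_{d-1}$ (including the endpoint cases) and your identification of the spurious factors $w_1^{d-2}$ and $x_1$ with the contracted divisors both check out, so no further changes are needed.
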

\begin{proof}
Note that $X_d$ passes through the point $p = [1:0:\dots:0]$, and the rational map 
$$
\begin{array}{llll}
\varphi: & \mathbb{P}^{n+1}_{(z_0,\dots,z_{n+1})} & \dasharrow & \mathbb{P}^{n+1}_{(w_0,\dots,w_{n+1})}\\
 & [z_0:\dots:z_{n+1}] & \mapsto & [z_0L:z_1^2:z_1z_2:\dots :z_1z_{n+1}]
\end{array}
$$
is birational with birational inverse 
$$
\begin{array}{llll}
\varphi^{-1}: & \mathbb{P}^{n+1}_{(w_0,\dots,w_{n+1})} & \dasharrow & \mathbb{P}^{n+1}_{(z_0,\dots,z_{n+1})}\\
 & [w_0:\dots:w_{n+1}] & \mapsto & [w_0w_1:w_1L:w_2L:\dots :w_{n+1}L]
\end{array}
$$
where $L = L(w_1,\dots,w_{n+1})$. Note that $\varphi^{-1}$ contracts the divisor $\{L = 0\}$ to the point $p$. The strict transform of $X_d$ via $\varphi^{-1}$ is given by 
\begin{small}
$$
\widetilde{X}_d = \{w_0^{d-2}(w_0w_1L+LQ) + w_0^{d-3}(w_0^2w_1^2A_1^0 + w_0w_1LA_1^1+L^2A_1^2)+ \dots +L^{d-3}(w_0^2w_1^2A_{d-2}^0 + w_0w_1LA_{d-2}^1+L^2A_{d-2}^2)=0\}.
$$
\end{small}
which we rewrite as 
\begin{small}
$$
\widetilde{X}_d = \{w_0^{d-1}(w_1L+w_1^2A_1^0)+w_0^{d-2}L(Q+w_1A_1^1+w_1^2A_2^0)+\dots +w_0L^{d-2}(A_{d-3}^2+w_1A_{d-2}^1) + L^{d-1}A_{d-2}^2 = 0\}.
$$
\end{small}
Finally, substituting $w_0 = \frac{x_0}{x_1}L$ we get the equation cutting out the divisor $Y_{(d-1,2)}\subset \mathbb{P}^1_{(x_0,x_1)}\times \mathbb{P}^{n}_{(w_1:\dots: w_{n+1})}$ in the statement.
\end{proof}

\begin{Proposition}\label{p0}
Let $Y_{(3,2)}$ be a general divisor of the form
$$
Y_{(3,2)} = \left\{\sum_{i=0}^{3}x_0^{3-i}x_1^iB_i = 0\right\}\subset\mathbb{P}^1_{(x_0,x_1)}\times \mathbb{P}^{n}_{(w_1,\dots, w_{n+1})}
$$
where $B_0 = w_1L+A_1^0w_1^2$, $B_1 = Q+w_1A_1^1+w_1^2A_2^0$, $B_{2} = A_{1}^2+w_1A_2^1$, $B_{3} = A_2^2$, and $L,A_i^1\in k[w_1,\dots,w_{n+1}]_1$, $Q,A_i^2\in k[w_1,\dots,w_{n+1}]_2$, $A_i^0\in k$ for $i = 1,2$ are general. If $n\geq 4$ then $Y_{(3,2)}$ is unirational.
\end{Proposition}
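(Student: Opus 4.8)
The plan is to present $Y_{(3,2)}$ as a quadric bundle over $\mathbb{P}^1$ and invoke the Enriques-type criterion of Proposition \ref{Enr}. Write $p_1\colon Y_{(3,2)}\to\mathbb{P}^1_{(x_0,x_1)}$ for the first projection. Since each $B_i$ is a quadratic form in $w_1,\dots,w_{n+1}$, the fibre of $p_1$ over $[x_0:x_1]$ is the quadric hypersurface $Q_{[x_0:x_1]}=\{\sum_i x_0^{3-i}x_1^iB_i=0\}\subset\mathbb{P}^n$ of dimension $n-1$, so $p_1$ is a quadric bundle; its generic fibre is a quadric over $k(\mathbb{P}^1)$, hence (Remark \ref{QC}) rational — a fortiori unirational — as soon as it has a rational point, and quadrics verify the dichotomy \emph{unirational $\Leftrightarrow$ has a point}. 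Therefore, by Proposition \ref{Enr} applied with base $\mathbb{P}^1$, it suffices to produce a unirational subvariety $Z\subseteq Y_{(3,2)}$ with $p_1|_Z$ dominant: the diagonal embedding $Z\hookrightarrow Y_{(3,2)}\times_{\mathbb{P}^1}Z$ is then a rational section of the projection to $Z$, so the generic fibre over $Z$ is a quadric over $k(Z)$ with a $k(Z)$-point, and the criterion applies.

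To build such a $Z$ one exploits the single special feature of the equation, namely that $B_0=w_1L+A_1^0w_1^2=w_1(L+A_1^0w_1)$ is a product of two linear forms, one of which is the coordinate $w_1$. Restricting the equation of $Y_{(3,2)}$ to the hyperplane $\{w_1=0\}\subset\mathbb{P}^n$ kills the term $x_0^3B_0$ and leaves an equation divisible by $x_1$; hence $Y_{(3,2)}$ contains the bidegree-$(2,2)$ divisor
\[
Z:=\{\,x_0^2\overline{B_1}+x_0x_1\overline{B_2}+x_1^2\overline{B_3}=0\,\}\subset\mathbb{P}^1\times\{w_1=0\}\cong\mathbb{P}^1\times\mathbb{P}^{n-1},
\]
where $\overline{B_i}:=B_i|_{\{w_1=0\}}$; by genericity $\overline{B_1},\overline{B_2},\overline{B_3}$ are independent general quadrics in $w_2,\dots,w_{n+1}$. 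The fibres of $p_1|_Z$ are quadrics of dimension $n-2\ge 0$, hence non-empty, so $p_1|_Z$ is dominant. Thus everything is reduced to showing that a general bidegree-$(2,2)$ divisor $Z\subset\mathbb{P}^1\times\mathbb{P}^{n-1}$ is unirational for $n\ge 4$.

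For this last step the plan is to recognise $Z$, which is itself a quadric bundle over $\mathbb{P}^1$, as birational — via Proposition \ref{qb-hyp} (equivalently via Lemma \ref{hyp_ci}) — to a quartic $X'_4\subset\mathbb{P}^n$ of dimension $n-1$ with multiplicity two along a codimension-two linear subspace $\Lambda'$ together with a double point $v\notin\Lambda'$, all defined over $k$; projection of $X'_4$ from $\Lambda'$ is a quadric bundle over $\mathbb{P}^1$ one of whose fibres is the quadric cone with vertex the $k$-rational point $v$. For $n\ge 4$ we have $\dim X'_4=n-1\ge 3$, so $X'_4$ lies in the range of quartics singular along a codimension-two linear subspace handled by Corollary \ref{nge3}, and one runs the construction of Propositions \ref{p_bun} and \ref{Y23s} (a rational surface $S$ — a suitable cone — over which an auxiliary quadric bundle becomes rational and maps dominantly onto $X'_4$), using the vertex $v$ to supply the rational point those constructions require. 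The main obstacle is precisely this: over a possibly non-closed $k$ a general smooth quadric (here the intersection of a fibre with $\Lambda'$, or the base of the cone) need not carry a rational point, and what has to be shown is that the $k$-rational vertex $v$, together with the extra room available once $\dim X'_4\ge 3$, produces the rational datum needed to feed Corollary \ref{nge3}/Proposition \ref{Y23s}; this is exactly where the hypothesis $n\ge 4$ enters.
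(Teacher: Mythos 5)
Your first step (reduce, via Proposition \ref{Enr}, to producing a unirational subvariety of $Y_{(3,2)}$ dominating $\mathbb{P}^1_{(x_0,x_1)}$) is exactly the paper's strategy, and your observation that $Y_{(3,2)}\cap(\mathbb{P}^1\times\{w_1=0\})$ splits off the bidegree-$(2,2)$ divisor $Z$ is correct. The gap is in the claim that $Z$ is unirational. As you note yourself, $\overline{B_1},\overline{B_2},\overline{B_3}$ are \emph{general} quadrics in $w_2,\dots,w_{n+1}$, so $Z$ is a general $(2,2)$ divisor in $\mathbb{P}^1\times\mathbb{P}^{n-1}$ with no residual structure; but over a non-$C_r$ field such a divisor need not have any rational point at all (see the remark following Theorem \ref{Quart_th}, citing \cite[Remark 4.15]{Mas22}; over $\mathbb{R}$ take $\overline{B_1}=\overline{B_3}=\sum w_i^2$ and $\overline{B_2}$ small), and a pointless variety over an infinite field cannot be unirational. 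Your proposed rescue does not repair this: passing to the quartic $X_4'\subset\mathbb{P}^n$ with double $\Lambda'$ and double point $v\notin\Lambda'$, the input required by Corollary \ref{nge3} (equivalently Proposition \ref{Y23s}) is a rational point on a \emph{smooth} quadric $Q_p\cap\Lambda'$ inside $\Lambda'$, and the vertex $v$, lying off $\Lambda'$ and being a singular point of its fibre, supplies no such datum. So the step you flag as ``the main obstacle'' is not merely unfinished; the intermediate statement you are reducing to is false over general fields of characteristic zero, and the Proposition is needed over such fields (e.g.\ number fields in Proposition \ref{Quart_3_Dens} and Theorem \ref{Quart_th}).

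The structural point you discarded is precisely what the paper uses. The special shape $B_0=w_1L+A_1^0w_1^2$ forces the rational point $p=([1:0],[0:\dots:0:1])\in Y_{(3,2)}$, and the paper keeps track of it: via \cite[Theorem 1.1 (ii)]{Ott15} the divisor $Y_{(3,2)}$ admits a small modification $Y_{(3,2)}^+\subset\mathbb{P}^2\times\mathbb{P}^n$ fibred over $\mathbb{P}^2$, the fibre $F_{\overline u}$ through the image of $p$ is a \emph{smooth complete intersection of two quadrics in $\mathbb{P}^n$ containing the rational point} $[0:\dots:0:1]$ (one checks that two columns of the matrix $M_{(\overline u)}$ become proportional there), hence is unirational for $n\geq 4$ by \cite[Proposition 2.3]{CSS87}; its strict transform back in $Y_{(3,2)}$ is the unirational multisection of $Y_{(3,2)}\to\mathbb{P}^1$ that feeds Proposition \ref{Enr}. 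If you want to salvage your hyperplane-section idea you must cut with a hyperplane (or other subvariety) that actually passes through a rational point of $Y_{(3,2)}$ and retains it on the relevant component; $\{w_1=0\}$ puts $p$ on the discarded component $\{x_1=0\}\times\{w_1=0\}$, not on $Z$.
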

\begin{proof}
Consider the rational map
\stepcounter{thm}
\begin{equation}\label{eta}
\begin{array}{llll}
\eta: & \mathbb{P}^1_{(x_0,x_1)}\times \mathbb{P}^{n}_{(w_1,\dots, w_{n+1})} & \dasharrow & \mathbb{P}^{2}_{(u_1,u_2,u_{3})}\\
 & ([x_0:x_1],[w_1:\dots: w_{n+1}]) & \mapsto & [\eta_1:\eta_2:\eta_{3}]
\end{array}
\end{equation}
where $\eta_i = B_ix_0^{2}+B_{i+1}x_0x_1+\dots+B_{3}x_0^{i-1}x_1^{3-i}$ for $i = 1,2,3$. By \cite[Theorem 1.1 (ii)]{Ott15} the rational map
$$
\begin{array}{lll}
\mathbb{P}^1_{(x_0,x_1)}\times \mathbb{P}^{n}_{(w_1,\dots, w_{n+1})} & \dasharrow & \mathbb{P}^{2}_{(u_1,u_2,u_{3})}\times \mathbb{P}^{n}_{(w_1,\dots, w_{n+1})}\\
([x_0:x_1],[w_1:\dots: w_{n+1}]) & \mapsto & (\eta([x_0:x_1],[w_1:\dots: w_{n+1}]),[w_1:\dots: w_{n+1}])
\end{array}
$$
yields a small transformation $\eta^{+}:Y_{(3,2)}\dasharrow Y_{(3,2)}^{+}$, where $Y_{(3,2)}^{+}\subset \mathbb{P}^{2}_{(u_1,u_2,u_{3})}\times \mathbb{P}^{n}_{(w_1,\dots, w_{n+1})}$ is cut out by the minors of order three of the following matrix
\stepcounter{thm}
\begin{equation}\label{Ott_Mx}
M_{(u_1,u_2,u_{3})} = 
\left(
\begin{array}{ccc}
0 & u_1 & B_0\\ 
-u_1 & u_2 & B_1\\ 
-u_{2} & u_{3} & B_{2}\\ 
-u_{3} & 0 & B_{3}
\end{array} 
\right).
\end{equation}
Consider the point $p = ([1:0],[0:\dots:0:1])\in Y_{(3,2)}$, its image 
$$q = ([B_1(0,\dots,0,1),B_{2}(0,\dots,0,1)],B_{3}(0,\dots,0,1)]) = ([Q(0,\dots,0,1),A_{1}^2(0,\dots,0,1),A_{2}^2(0,\dots,0,1)])$$
via $\eta$ and set $\overline{u}_1 = Q(0,\dots,0,1),\overline{u}_{2} = A_{1}^2(0,\dots,0,1),\overline{u}_{3} = A_{2}^2(0,\dots,0,1)$. Let $F_{\overline{u}}$ be the fiber of the first projection $\pi_1:\mathbb{P}^{2}_{(u_1,u_2,u_{3})}\times \mathbb{P}^{n}_{(w_1,\dots, w_{n+1})}\rightarrow \mathbb{P}^{2}_{(u_1,u_2,u_{3})}$ over $\overline{u} = [\overline{u}_1:\overline{u}_2:\overline{u}_{3}]$. Then 
$$
F_{\overline{u}} = \{\rank(M_{(\overline{u}_1,\overline{u}_2,\overline{u}_{3})}) < 3\}\subset \mathbb{P}^{n}_{(w_1,\dots, w_{n+1})}
$$
is a complete intersection of two quadrics. Note that $q\in F_{\overline{u}}$ and since the $\overline{u}_i$ are general $F_{\overline{u}}$ is smooth. Therefore, if $n\geq 4$ \cite[Proposition 2.3]{CSS87} yields the unirationality of $F_{\overline{u}}$. The strict transform of $F_{\overline{u}}$ via $\eta^{+}$ is given by
$$
\widetilde{F}_{\overline{u}} = \left\lbrace 
\rank\left(\begin{array}{ccc}
\overline{u}_1 & \overline{u}_2 & \overline{u}_3 \\ 
B_1x_0^2+B_2x_0x_1+B_3x_1^2 & B_2x_0^2+B_3x_0x_1 & B_3x_0^2
\end{array}\right) < 2\right\rbrace \subset \mathbb{P}^1_{(x_0,x_1)}\times \mathbb{P}^{n}_{(w_1,\dots, w_{n+1})}.
$$
So $\widetilde{F}_{\overline{u}}$ is unirational and maps dominantly onto $\mathbb{P}^1_{(x_0,x_1)}$. Finally, to conclude it is enough to note that $Y_{(3,2)}\rightarrow \mathbb{P}^1_{(x_0,x_1)}$ is a fibration in quadric hypersurfaces and to apply Proposition \ref{Enr} and Remark \ref{QC}.
\end{proof}

\begin{Proposition}\label{Quart_2}
Let $X_4\subset\mathbb{P}^{n+1}$ be a quartic hypersurface having double points along a codimension two linear subspace $\Lambda\subset\mathbb{P}^{n+1}$, with a point $p\in X_4\setminus\Lambda$ and otherwise general. If $n\geq 4$ then $X_4$ is unirational.
\end{Proposition}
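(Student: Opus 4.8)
The plan is to put $X_4$, after a change of coordinates, into the shape covered by Lemma \ref{l0} with $d=4$, and then invoke Proposition \ref{p0}. First I would choose homogeneous coordinates $z_0,\dots,z_{n+1}$ on $\mathbb{P}^{n+1}$ so that the $(n-1)$-plane $\Lambda$ is cut out by $\{z_0=z_1=0\}$ (this is the codimension two hypothesis). Since $p\notin\Lambda$ and the subgroup of $\mathrm{PGL}_{n+2}$ preserving $\Lambda$ acts transitively on the complement of $\Lambda$, I may further arrange $p=[1:0:\dots:0]$ while keeping $\Lambda=\{z_0=z_1=0\}$.

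Because $X_4$ has multiplicity two along $\Lambda$, its equation has the form $z_0^2A+z_0z_1B+z_1^2C=0$ with $A,B,C\in k[z_0,\dots,z_{n+1}]_2$ general (as $X_4$ is otherwise general). The single condition $p\in X_4$ says precisely that the coefficient of $z_0^4$, that is, the coefficient of $z_0^2$ in $A$, vanishes; hence $A=z_0L+Q$ with $L\in k[z_1,\dots,z_{n+1}]_1$ and $Q\in k[z_1,\dots,z_{n+1}]_2$, while $B,C$ remain arbitrary general quadrics. Writing $B=A_1^2+z_0A_1^1+z_0^2A_1^0$ and $C=A_2^2+z_0A_2^1+z_0^2A_2^0$ with $A_i^2\in k[z_1,\dots,z_{n+1}]_2$, $A_i^1\in k[z_1,\dots,z_{n+1}]_1$ and $A_i^0\in k$, one checks that $X_4$ is exactly of the form appearing in Lemma \ref{l0} for $d=4$, and that the data $L,Q,A_i^0,A_i^1,A_i^2$ (for $i=1,2$) are general; in particular $L\neq 0$, so the birational map used in the proof of Lemma \ref{l0} is well defined.

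Applying Lemma \ref{l0} with $d=4$ therefore shows that $X_4$ is birational to a divisor $Y_{(3,2)}\subset\mathbb{P}^1_{(x_0,x_1)}\times\mathbb{P}^n_{(w_1,\dots,w_{n+1})}$ of bidegree $(3,2)$ with $B_0=w_1L+A_1^0w_1^2$, $B_1=Q+w_1A_1^1+w_1^2A_2^0$, $B_2=A_1^2+w_1A_2^1$, $B_3=A_2^2$ — precisely the form in Proposition \ref{p0} — and $Y_{(3,2)}$ is general because its coefficients are. Since $n\geq 4$, Proposition \ref{p0} gives that $Y_{(3,2)}$ is unirational, hence so is $X_4$.

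There is no real obstacle here: all the substantive content — the explicit birational model as a $(3,2)$-divisor in a product of projective spaces and its unirationality through a complete intersection of two quadrics — has already been established in Lemma \ref{l0} and Proposition \ref{p0}. The only point worth spelling out is that ``general among quartics with a double $\Lambda$ passing through $p$'' corresponds to ``general'' for the tuple $(L,Q,B,C)$, which is immediate since imposing $p\in X_4$ removes a single linear condition and the resulting linear system is parametrized freely by that tuple.
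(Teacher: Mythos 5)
Your proposal is correct and follows exactly the paper's route: the paper's proof of this proposition consists precisely of observing that the equation of $X_4$ can be put in the form of Lemma \ref{l0} with $d=4$ and then invoking Proposition \ref{p0}. Your write-up merely supplies the (straightforward) coordinate normalization and the observation that $p\in X_4$ forces $A=z_0L+Q$, which the paper leaves implicit.
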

\begin{proof}
The equation of $X_4\subset\mathbb{P}^{n+1}$ can be written as in Lemma \ref{l0} for $d = 4$. Hence the claim follows from Lemma \ref{l0} and Proposition \ref{p0}.
\end{proof}

For quartic $3$-folds we have the following density result:

\begin{Proposition}\label{Quart_3_Dens}
Let $X_4\subset\mathbb{P}^{4}$ be a quartic hypersurface, over a number field $k$, having double points along a codimension two linear subspace $\Lambda\subset\mathbb{P}^{4}$, with a point $p\in X_4\setminus\Lambda$ and otherwise general. The set $X_4(k)$ of the $k$-rational points of $X_4$ is Zariski dense in $X_4$.
\end{Proposition}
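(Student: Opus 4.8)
The plan is to realise $X_4$ as a quadric surface bundle over $\P^1$ and then to produce rational points on a Zariski-dense family of fibres. First I would put the equation of $X_4$ into the form of Lemma \ref{l0} with $d=4$ (exactly as in the proof of Proposition \ref{Quart_2}): choosing coordinates so that $\Lambda=\{z_0=z_1=0\}$ and $p=[1:0:\dots:0]$, the quartic becomes $z_0^2(z_0L+Q)+z_0z_1A_1+z_1^2A_2=0$, and by Lemma \ref{l0} $X_4$ is $k$-birational to the bidegree $(3,2)$ divisor $Y:=Y_{(3,2)}\subset\P^1_{(x_0,x_1)}\times\P^3_{(w_1,\dots,w_4)}$ of Proposition \ref{p0} with $n=3$, the point $p$ corresponding to $([1:0],[0:\dots:0:1])\in Y$ (equivalently, the blow-up $\Bl_\Lambda X_4\to\P^1$ is the quadric surface bundle of Lemma \ref{hyp}). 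Since density of $k$-points is a $k$-birational invariant it suffices to treat $Y$. The first projection $\pi\colon Y\to\P^1_{(x_0,x_1)}$ is a quadric surface bundle: its fibre over $[t_0:t_1]$ is the quadric surface $\{\sum_{i=0}^{3}t_0^{3-i}t_1^{i}B_i(w)=0\}\subset\P^3_w$; the morphism $\pi$ is smooth at $p$, and for $X_4$ general the generic fibre of $\pi$ is a smooth quadric surface and, away from the fibre over $[1:0]$, its singular fibres are quadric cones, the discriminant of $\pi$ being a divisor of degree $12$ on $\P^1$.

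Next I would run the fibration method. For each place $v$ of $k$ set $U_v:=\{t\in\P^1(k_v):\pi^{-1}(t)(k_v)\neq\emptyset\}$. Because $p$ is a smooth $k_v$-point at which $\pi$ is smooth, the $v$-adic implicit function theorem gives a $v$-analytic local section of $\pi$ near $p$, so $U_v$ is open in $\P^1(k_v)$ and contains a neighbourhood of $[1:0]$; in general $U_v$ is open because a quaternary quadratic form with a smooth zero over $k_v$ stays isotropic under small $v$-adic perturbations. There is a finite set $S$ of places — the archimedean ones together with the finitely many finite places of bad reduction of the pair $(\pi,p)$ — such that for $v\notin S$ a fibre over a $k_v$-point of good reduction has a point over the residue field by the Chevalley--Warning theorem (every quaternary quadratic form over a finite field is isotropic), which lifts by Hensel's lemma, while the fibres over the remaining $k_v$-points are $k_v$-isotropic too since they only degenerate to quadric cones; so for $v\notin S$ the complement of $U_v$ in $\P^1(k_v)$ is confined to small $v$-adic balls about the zeros of the discriminant of $\pi$.

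Combining these local facts with weak approximation on $\P^1$, I would then select a Zariski-dense set $T\subseteq\P^1(k)$, disjoint from the discriminant locus, such that $\pi^{-1}(t)$ has a $k_v$-point for every place $v$ whenever $t\in T$. By the Hasse--Minkowski theorem $\pi^{-1}(t)(k)\neq\emptyset$ for each $t\in T$, and a smooth quadric surface over $k$ with a rational point is $k$-rational, so $\pi^{-1}(t)(k)$ is Zariski-dense in $\pi^{-1}(t)$. A closed subset of $Y$ containing $\bigcup_{t\in T}\pi^{-1}(t)(k)$ must then contain $\pi^{-1}(t)$ for every $t\in T$, hence contains $\pi^{-1}(T)$, which is dense in $Y$ because $T$ is dense in $\P^1$; therefore $Y(k)$, and so $X_4(k)$, is Zariski-dense. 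The step I expect to be the genuine obstacle is the construction of $T$: this is the classical delicate point of the fibration method — one must choose $t\in\P^1(k)$ that, simultaneously at every place, avoids the small balls where $\pi^{-1}(t)$ degenerates to the anisotropic quaternary $k_v$-form. The leverage I would use is that the fibres are quaternary (not ternary) quadrics, so that an anisotropic completion forces a strongly constrained $v$-adic condition on $t$ (the discriminant must be a square at $v$, in particular of even $v$-valuation), and that a $k$-rational point $p$ is on hand to anchor the approximation; making this unconditional is precisely where the generality hypothesis on $X_4$ enters, and one may alternatively invoke known density results for quadric surface bundles over $\P^1$ over number fields.
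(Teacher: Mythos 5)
Your route is genuinely different from the paper's. The paper also reduces to the bidegree $(3,2)$ divisor $Y_{(3,2)}\subset\mathbb{P}^1\times\mathbb{P}^3$ via Lemma \ref{l0}, but then it ignores the quadric bundle structure entirely: it uses the map $\eta'$ of Proposition \ref{p0} to slice $Y_{(3,2)}$ by surfaces $S_{L,p}=\eta'^{-1}(L)$ that are elliptic fibrations over $\mathbb{P}^1$, exhibits the conic $C=H\cap S_{L,p}$ (with the $k$-point coming from $H=\{x_1=w_1=0\}$) as a rational $4$-section, and invokes the Harris--Tschinkel argument \cite[Theorem 8.1]{HT00} to get density on each $S_{L,p}$; varying $p$ and $L$ sweeps out $Y_{(3,2)}$. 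That mechanism is purely geometric (non-torsion multisections of elliptic fibrations) and makes no use of local-global principles.

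Your proof, as written, is incomplete at exactly the step you flag: you never construct $T$, and the fallback of ``invoking known density results for quadric surface bundles over $\mathbb{P}^1$ over number fields'' is not available --- for arbitrary degenerations this is a well-known open problem (unconditional results exist only in special cases; the general case is conditional on Schinzel's hypothesis), so it cannot be cited as a black box. The gap is, however, closable, and the point you are missing is one you half-state yourself: for this family the ``classical delicate point'' does not arise, because $U_v=\mathbb{P}^1(k_v)$ for \emph{all} $v$ outside a single finite set $S$ independent of $t$. Indeed, for general $X_4$ the matrix family $M(x_0,x_1)$ has exactly one rank-$2$ fibre, namely $B_0=w_1(L+A_1^0w_1)$ over $[1{:}0]$, which is a union of two planes \emph{defined over $k$}, and all other degenerate fibres have rank $3$; hence for all but finitely many $v$ the reduction of $M(t)$ modulo $v$ has rank $\geq 3$, or is the split rank-$2$ form, for \emph{every} $t\in\mathbb{P}^1(\mathcal{O}_v)$, and in either case the reduced quadric has a smooth $\mathbb{F}_v$-point (Chevalley--Warning on a nondegenerate ternary subform, resp.\ a point on one plane off the other) which lifts by Hensel. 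So there are no ``small balls to avoid'' at places outside $S$; the anisotropic quaternary form only threatens when the reduction drops to rank $\leq 2$ non-split, which the generality hypothesis excludes. With that established, weak approximation at the finitely many places of $S$ (anchored at the smooth point $p$ of $\pi$, whose fibre $\{B_0=0\}$ is degenerate but smooth at $p$ since $L(0,0,0,1)\neq 0$) produces an infinite, hence Zariski-dense, set $T\subset\mathbb{P}^1(k)$ of everywhere-locally-soluble smooth fibres, and Hasse--Minkowski finishes as you say. You must write out this local analysis explicitly --- it is the entire content of the argument --- and you should delete the sentence asserting that the complement of $U_v$ is confined to balls around the discriminant zeros, which contradicts the (correct) claim two lines later that those fibres are isotropic as well.
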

\begin{proof}
By Lemma \ref{l0} it is enough to prove that a general divisor $Y_{3,2}\subset\mathbb{P}^1\times\mathbb{P}^3$ as in Proposition \ref{p0} has dense $k$-points. Consider the $2$-plane $H = \{x_1 = w_1 = 0\}\subset Y_{3,2}$ and the rational map
$$
\begin{array}{lcll}
\eta': & Y_{(3,2)} & \dasharrow & \mathbb{P}^{2}_{(u_1,u_2,u_{3})}\\
 & ([x_0:x_1],[w_1:\dots: w_{4}]) & \mapsto & [\eta_1:\eta_2:\eta_{3}]
\end{array}
$$
induced by (\ref{eta}). Note the $\eta'$ maps $H$ dominantly onto $\mathbb{P}^{2}_{(u_1,u_2,u_{3})}$. Take a general point $p\in H$, its image $q = \eta'(p)$ and a general line $L = \{\alpha_1u_1+\alpha_2u_2+\alpha_3u_3 = 0\}\subset \mathbb{P}^{2}_{(u_1,\dots,u_{3})}$ through $q$. Set $S_{L,p} = \eta'^{-1}(L)$. Since $Y_{(3,2)}$ is general $\eta'$ restricts to a morphism $\eta'_{|S}:S\rightarrow\mathbb{P}^1$. A straightforward computation shows that $S_{L,p}$ is smooth and by the argument in the second part of the proof of Proposition \ref{p0} the general fiber of $\eta'_{|S_{L,p}}:S_{L,p}\rightarrow\mathbb{P}^1$ is a smooth curve of genus one. 

Furthermore, $C = H\cap S_{L,p} = \{\alpha_1B_1+\alpha_2B_2+\alpha_3B_3 = x_1 = w_1 = 0\}$. Hence, since $Y_{3,2}$ is general, $C$ is a smooth conic with a point $p\in C$. Note that a general fiber of $\eta'_{|S_{L,p}}:S_{L,p}\rightarrow\mathbb{P}^1$ intersects a general fiber of the quadric bundle $Y_{3,2}\rightarrow\mathbb{P}^1$ in a $0$-dimensional scheme of degree eight. So, $C\subset S_{L,p}$ is a rational $4$-section of $\eta'_{|S_{L,p}}$. 

Consider the fiber product $T = C\times_{L}S$. Since $p,L$ and $Y_{3,2}$ are general the ramification divisor of $\eta'_{|C}:C\rightarrow\mathbb{P}^1$ is disjoint from the singular fibers of $\eta'_{|S_{L,p}}$. Hence, $T$ is smooth and the proof of \cite[Theorem 8.1]{HT00} goes through. So the set $S_{L,p}(k)$ of the $k$-rational points of $S_{L,p}$ is Zariski dense in $S_{L,p}$. Finally, letting the point $p$ vary in $H$ and the line $L$ vary among the lines passing through $q = \eta'(p)$ we get the claim.  
\end{proof}

The following is our main result on the unirationality of quadric hypersurfaces having double points along a linear subspace.

\begin{thm}\label{Quart_th}
Let $X_4\subset\mathbb{P}^{n+1}$ be a quartic hypersurface having double points along an $h$-plane $\Lambda\subset\mathbb{P}^{n+1}$, with a point $p\in X_4\setminus\Lambda$ and otherwise general. If $n\geq 4$ and $h\geq 3$ then $X_4$ is unirational.  

Furthermore, if $k$ is a $C_r$ field, $n\geq 3$, and $s+1 > 2^r$ where $s = \max\{n-h,h\}$ a general quartic hypersurface $X_4\subset\mathbb{P}^{n+1}$ having double points along an $h$-plane is unirational.
\end{thm}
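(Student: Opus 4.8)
The plan has two independent halves, matching the two assertions; note that in the first one $h\ge 3$ already forces $n\ge h+1\ge 4$ since $\Lambda$ has codimension at least two.

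\medskip\noindent\textbf{First assertion.} The plan is to reduce to Proposition \ref{Quart_2} by fibering $X_4$ into codimension-two quartics. I would set $\Lambda'=\Span{\Lambda,p}$, an $(h+1)$-plane, and let $B\cong\P^{n-h-1}$ be the (rational) space of $(h+2)$-planes $M$ with $\Lambda'\subseteq M\subseteq\P^{n+1}$. Form the incidence variety $\mathcal W=\{(x,M): x\in X_4\cap M\}$ with its projections $q_1:\mathcal W\to X_4$ and $q_2:\mathcal W\to B$. Through a general point $x\in X_4$ the span $\Span{\Lambda',x}$ is the unique member of $B$ containing $x$, so $q_1$ is birational and it suffices to show $\mathcal W$ is unirational. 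Now $q_2$ is a fibration over the rational base $B$ whose generic fibre is $X_4\cap M_\eta\subset M_\eta\cong\P^{h+1}_{k(B)}$, a quartic having double points along the codimension-two linear subspace $\Lambda$, containing $p$ (which lies on every $M$), and general over $k(B)$ because $X_4$ is general. Since $\dim(X_4\cap M_\eta)=h+1\ge 4$, Proposition \ref{Quart_2} shows $X_4\cap M_\eta$ is unirational over $k(B)$; spreading the corresponding dominant map out over $B$ shows $\mathcal W$, hence $X_4$, is unirational.

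\medskip\noindent\textbf{Second assertion.} By Lemma \ref{hyp} with $d=4$, $m=2$, the blow-up $\widetilde X_4$ of $X_4$ along $\Lambda$ is a quadric bundle $\pi:\widetilde X_4\to\P^{n-h}$ with fibres quadric hypersurfaces in $\P^{h+1}$, and with exceptional divisor $\widetilde E=\widetilde X_4\cap E$ a general divisor of bidegree $(2,2)$ in $\P^{n-h}\times\P^{h}$. If $h=n-1$ I would conclude directly: a suitable fibre of the quadric fibration of $\widetilde X_4$ over $\P^1$ induces on $\Lambda\cong\P^{n-1}$ a general quadric in $n=s+1>2^r$ variables, which by Lang's theorem (Remark \ref{lang}) is smooth with a $k$-point, so Corollary \ref{nge3} applies. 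For general $h$ the plan is to apply the Enriques criterion (Proposition \ref{Enr}) to $\pi:\widetilde X_4\to\P^{n-h}$: its fibres are quadrics, so being unirational over a function field is the same as having a rational point there, and $\widetilde E$ dominates $\P^{n-h}$; hence it is enough to prove that $\widetilde E$ is unirational. Exchanging the two factors if necessary I may assume $n-h\le h$, so $s=h$. Over a general $k$-point $b\in\P^{n-h}(k)$ the fibre $\widetilde E_b\subset\P^{h}$ is a quadric in $s+1>2^r$ variables, hence isotropic by Lang's theorem and thus rational; a general $k$-point of it is a smooth point of the fibre $\widetilde X_{4,b}\subset\P^{h+1}$ of $\pi$, so $\widetilde X_{4,b}$ is rational, its $k$-points are dense, and those lying off $\widetilde E$ push down through the blow-down to $k$-points of $X_4\smallsetminus\Lambda$. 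With such a rational point in hand I would run the Enriques criterion on $\widetilde E$ itself: the fibres of $\widetilde E\to\P^{n-h}$ are quadrics, and restricting $\widetilde E$ to $\P^{n-h}\times\ell$ for a general line $\ell\subset\P^{h}$ gives a double cover of $\P^{n-h}$ branched along a quartic hypersurface — a ``quartic double space'' — which dominates $\P^{n-h}$ and is unirational (classically over $\bar k$, and over $k$ thanks to the rational point just exhibited). Proposition \ref{Enr} then gives that $\widetilde E$, hence $\widetilde X_4$, hence $X_4$, is unirational; alternatively, when $h\ge 3$ and $n\ge 4$ the rational point produced above can simply be fed into the first assertion.

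\medskip\noindent\textbf{Expected main obstacle.} The hard part is in the second assertion: one must track the levels in Lang's theorem precisely enough that exactly one application, to a quadric in $\P^{s}$, is needed, and one must supply unirationality of the auxiliary quartic double spaces over a $C_r$ field once they carry a rational point (for $n=3$ this amounts to the unirationality of a del Pezzo surface of degree two with a rational point). The first assertion is, by comparison, a bookkeeping reduction to Proposition \ref{Quart_2}.
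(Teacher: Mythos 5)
Your proof of the first assertion is essentially the paper's: both project from $H=\langle p,\Lambda\rangle$ to $\mathbb{P}^{n-h-1}$, observe that the generic fibre is an $(h+1)$-dimensional quartic in $\mathbb{P}^{h+2}_{k(B)}$ with double points along the codimension-two subspace $\Lambda$ and a rational point coming from $p$, and feed it to Proposition \ref{Quart_2} (your ``$M_\eta\cong\mathbb{P}^{h+1}$'' is an off-by-one slip; the dimension count you actually use, $\dim(X_4\cap M_\eta)=h+1\ge 4$, is the right one). This half is fine.

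The second assertion has a genuine gap, and it sits exactly where you flagged it: after producing a $k$-point of the bidegree $(2,2)$ divisor $\widetilde E\subset\mathbb{P}^{n-h}\times\mathbb{P}^h$ via Lang's theorem, you still must show $\widetilde E$ is \emph{unirational} before Proposition \ref{Enr} can be applied to $\widetilde X_4\to\mathbb{P}^{n-h}$. Your proposed route — restrict $\widetilde E$ to $\mathbb{P}^{n-h}\times\ell$ and use unirationality of the resulting double cover of $\mathbb{P}^{n-h}$ branched in a quartic — does not go through in the range the statement actually covers. For $n-h=1$ that restriction is a bidegree $(2,2)$ curve in $\mathbb{P}^1\times\mathbb{P}^1$, i.e.\ a genus-one curve, which is never unirational; for $n-h=2$ it is a degree-two del Pezzo surface, whose unirationality from a single rational point over a non-closed field is a deep and not fully resolved problem; for $n-h\ge 3$ the unirationality of a quartic double space over $k$ given only a $k$-point is likewise not available. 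The paper closes precisely this step by quoting \cite[Theorem 1.8]{Mas22}, which gives unirationality of such $(2,2)$ divisors once they have a point; without that (or an equivalent argument) your proof of the second assertion is incomplete. Your fallback — push the Lang point down to a point of $X_4\setminus\Lambda$ and invoke the first assertion — is valid but only covers $n\ge 4$, $h\ge 3$, whereas the second assertion is stated for $n\ge 3$ and arbitrary $h$ with $s+1>2^r$ (e.g.\ $n=3$ or $h\le 2$), so the remaining cases are not handled.
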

\begin{proof}
The case $h = n-1$ comes from Proposition \ref{Quart_2}. Let $\pi_{H}:X_4\dasharrow\mathbb{P}^{n-h-1}$ be the projection from $H = \left\langle p,\Lambda\right\rangle$, $\widetilde{\pi}_{H}:\widetilde{X}_4\rightarrow\mathbb{P}^{n-h-1}$, and $F_p\cong \mathbb{P}^{n-h-1}$ the fiber over $p$ of the blow-up $\widetilde{X}_4\rightarrow X_4$ along $H\cap X_4$. 

The generic fiber $\widetilde{X}_{4,\eta}$ of $\widetilde{\pi}_{H}:\widetilde{X}_4\rightarrow\mathbb{P}^{n-h-1}$ is a quartic hypersurface $\widetilde{X}_{4,\eta}\subset\mathbb{P}^{h+2}_{k(t_1,\dots,t_{n-h-1})}$ with double points along an $h$-plane and with a $k(t_1,\dots,t_{n-h-1})$-rational point induced by $F_p$. 

Therefore, Proposition \ref{Quart_2} yields that $\widetilde{X}_{4,\eta}$ is unirational over $k(t_1,\dots,t_{n-h-1})$ and hence $\widetilde{X}_{4}$ is unirational. 

Now, assume $k$ to be $C_r$. The exceptional divisor $\widetilde{E}$ in Lemma \ref{hyp} is a divisor of bidegree $(2,2)$ in $\mathbb{P}^{n-h}\times\mathbb{P}^h$, and since $X_4$ is general $\widetilde{E}$ maps dominantly onto $\mathbb{P}^{n-h}$. Furthermore, since both the fibrations $\widetilde{E}$ have quadric hypersurfaces as fibers and $s+1 > 2^r$ by Remark \ref{lang} the divisor $\widetilde{E}$ has a point. To conclude it is enough to apply \cite[Theorem 1.8]{Mas22} and Proposition \ref{Enr}.
\end{proof}

\begin{Remark}
In the second part of Theorem \ref{Quart_th} the assumption on the base field is needed in order to ensure the existence of a point in the exceptional divisor $\widetilde{E}\subset \mathbb{P}^{n-h}\times\mathbb{P}^h$. Indeed, over non $C_r$ fields there are divisors of bidegree $(2,2)$ in $\mathbb{P}^{n-h}\times\mathbb{P}^h$ without points \cite[Remark 4.15]{Mas22}. 
\end{Remark}

\begin{Remark}\label{RP_Nec}
In Theorem \ref{Quart_th} the assumption on the existence of a point in $X_4\setminus\Lambda$ is necessary as the following argument shows. Take for instance $n = 3$, $h = 1$ and $k = \mathbb{R}$. By Lemma \ref{hyp} $X_4$ is birational to a conic bundle $D\subset\mathcal{T}_{1,0,0}$ of multidegree $(4,3,3,2,2,2)$. We may write $D$ as the zero set of a polynomial 
$$
f = a_0x_0^4 + a_1x_0^3x_1+a_2x_0^2x_1^2+a_3x_0x_1^3+a_4x_1^4
$$
where $a_i\in\mathbb{R}[x_1,x_2,y_0,y_1,y_2]$. Let $V$ be the $\mathbb{R}$-vector space parametrizing these conic bundles and consider the map
$$
\begin{array}{cccc}
 ev: & V\times\mathbb{R}^5 & \longrightarrow & \mathbb{R}^5 \\ 
 & (f,(\overline{x}_1,\overline{x}_2,\overline{y}_0,\overline{y}_1,\overline{y}_2)) & \mapsto & P_{f,\overline{x}_1,\overline{x}_2,\overline{y}_0,\overline{y}_1,\overline{y}_2}(x_0) 
 \end{array}  
$$  
where $P_{f,\overline{x}_1,\overline{x}_2,\overline{y}_0,\overline{y}_1,\overline{y}_2}(x_0) = f(x_0,\overline{x}_1,\overline{x}_2,\overline{y}_0,\overline{y}_1,\overline{y}_2)\in\mathbb{R}[x_0]$. The polynomials $P_{f,\overline{x}_1,\overline{x}_2,\overline{y}_0,\overline{y}_1,\overline{y}_2}(x_0)$ with a real root form a semi-algebraic subset $R_{rr}\subset\mathbb{R}^5$ of maximal dimension, see for instance \cite[Section 4.2.3]{BPR06}. Hence, $ev^{-1}(R_{rr})\subset V\times\mathbb{R}^5$ is also a semi-algebraic subset of maximal dimension and then the Tarski-Seidenberg principle \cite[Section 2.2]{BCR98} yields that $Z_{rp}= \pi_1(ev^{-1}(R_{rr}))\subset V$, where $\pi_1:V\times\mathbb{R}^5\rightarrow V$ is the projection, is a semi-algebraic subset of maximal dimension. Note that
$$
Z_{rp} = \{D\subset\mathcal{T}_{1,0,0} \text{ of mutidegree } (4,3,3,2,2,2) \text{ having a rational point}\}.
$$
The complementary set $Z_{nrp} = Z_{rp}^c$ is non-empty, take for instance 
$$
D = \{(x_0^2+x_1^2+x_2^2)^2y_0^2+(x_0^2+x_1^2+2x_2^2)y_1^2+(x_0^2+x_1^2+3x_2^2)y_2^2 = 0\}\subset \mathcal{T}_{1,0,0}.
$$ 
Hence, $Z_{nrp}\subset V$ is also a semi-algebraic subset of maximal dimension. By Lemma \ref{hyp} the quartics $X_4\subset\mathbb{P}^4$ corresponding to the conic bundles in $Z_{nrp}$ do not have a point in $X_4\setminus\Lambda$ and hence they can not be unirational. 
\end{Remark}

\begin{Lemma}\label{sing0}
Consider the hypersurface 
$$
X_d = \{z_0^{d-2}Q + z_0^{d-3}z_1 A_1+\dots + z_0z_1^{d-3}A_{d-3}+z_1^{d-2}A_{d-2} = 0\}\subset\mathbb{P}^{n+1}_{(z_0,\dots,z_{n+1})}
$$
where $A_i = A_i^2 + x_0A_i^1+x_0^2A_i^0$, and $A_i^1\in k[z_1,\dots,z_{n+1}]_1$, $Q,A_i^2\in k[z_1,\dots,z_{n+1}]_2$, $A_i^0\in k$ for $i = 1,\dots,d-2$. Then $X_d$ is birational to the divisor
$$
Y_{(d-2,2)} = \left\{ x_0^{d-2}Q+\sum_{i=1}^{d-2}x_0^{d-2-i}x_1^iA_i = 0\right\}\subset\mathbb{P}^1_{(x_0,x_1)}\times \mathbb{P}^{n}_{(z_1,\dots, z_{n+1})}
$$
of bidegree $(d-2,2)$.
\end{Lemma}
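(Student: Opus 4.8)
The plan is to run the argument of Lemma \ref{l0} with the linear form $L$ suppressed; the only modification needed is that the Cremona transformation used there must be fed an auxiliary general linear form, since the one attached to $X_d$ has degenerated. First note that $p=[1:0:\dots:0]\in X_d$, because every monomial of the defining polynomial is divisible by $z_1$ apart from those coming from $z_0^{d-2}Q$, and $Q\in k[z_1,\dots,z_{n+1}]_2$ vanishes at $p$. Pick a general linear form $M=M(z_1,\dots,z_{n+1})$ and consider the birational map
\[
\varphi\colon \mathbb{P}^{n+1}_{(z_0,\dots,z_{n+1})}\dasharrow \mathbb{P}^{n+1}_{(w_0,\dots,w_{n+1})},\qquad [z_0:\dots:z_{n+1}]\longmapsto [z_0M:z_1^2:z_1z_2:\dots:z_1z_{n+1}],
\]
whose inverse is $[w_0:\dots:w_{n+1}]\mapsto [w_0w_1:w_1M:\dots:w_{n+1}M]$ with $M=M(w_1,\dots,w_{n+1})$; as in Lemma \ref{l0}, $\varphi^{-1}$ contracts $\{M=0\}$ to $p$ and $\{w_1=0\}$ to the plane $\{z_0=z_1=0\}$.

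Next, compute the strict transform of $X_d$ under $\varphi^{-1}$. Substituting $z_0=w_0w_1$ and $z_j=w_jM(w)$ for $j\ge 1$, every form in $z_1,\dots,z_{n+1}$ of degree $\delta$ acquires the factor $M(w)^{\delta}$; after removing the common factor coming from the contracted loci — a power of $w_1$ times a power of $M(w)$ — one is left with an equation of the shape $\sum_j w_0^{\,\ast}M(w)^{\,\ast}C_j$, where the $C_j$ are quadratic forms in $w_1,\dots,w_{n+1}$ assembled out of $Q$ and the $A_i^{2},A_i^{1},A_i^{0}$.

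Finally, substitute $w_0=\frac{x_0}{x_1}M(w)$. As in Lemma \ref{l0} this creates the factor $\mathbb{P}^1_{(x_0,x_1)}$, the remaining powers of $M(w)$ combine into a single common power which is cleared, and what survives is exactly the divisor $Y_{(d-2,2)}\subset\mathbb{P}^1_{(x_0,x_1)}\times\mathbb{P}^{n}_{(z_1,\dots,z_{n+1})}$ of bidegree $(d-2,2)$. The two computational steps are routine but bulky; the only genuinely delicate point is the middle one — pinning down which power of $w_1$ and which power of $M(w)$ divide $\varphi^{\ast}X_d$, i.e.\ identifying the exceptional components of $\varphi^{-1}$ correctly — since this is precisely what determines the bidegree of the divisor obtained.
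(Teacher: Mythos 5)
Your construction does work, but it is a detour: the paper's entire proof is the single substitution $z_0=\tfrac{x_0}{x_1}z_1$, i.e.\ the standard birational map $\mathbb{P}^1_{(x_0,x_1)}\times\mathbb{P}^n_{(z_1,\dots,z_{n+1})}\dasharrow\mathbb{P}^{n+1}$, $([x_0:x_1],[z_1:\dots:z_{n+1}])\mapsto[x_0z_1:x_1z_1:x_1z_2:\dots:x_1z_{n+1}]$, which resolves the projection from $\Lambda=\{z_0=z_1=0\}$. Under it each term $z_0^{d-2-i}z_1^{i}A_i$ picks up the same exceptional factor $z_1^{d-2}x_1^{2}$, and dividing it out leaves exactly $Y_{(d-2,2)}$; no auxiliary linear form and no Cremona map are needed, because unlike in Lemma \ref{l0} there is no term $z_0^{d-1}L$ forcing one to exploit a point of multiplicity $d-1$ in addition to $\Lambda$. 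In fact your map collapses to this one: composing your $\varphi^{-1}$ with the final substitution $w_0=\tfrac{x_0}{x_1}M$ gives $z_0=w_0w_1=\tfrac{x_0}{x_1}Mw_1$ and $z_j=Mw_j$, which up to the overall factor $M$ on all coordinates is precisely $z_0=\tfrac{x_0}{x_1}z_1$ — your general linear form $M$ cancels identically. This is why the step you yourself flag as the ``only genuinely delicate point'' (determining the powers of $w_1$ and of $M$ dividing $\varphi^{*}X_d$) exists at all: it is bookkeeping manufactured by the detour. For the record it does close — the pullback is divisible by exactly $w_1^{d-2}M^{2}$, and after setting $w_0=\tfrac{x_0}{x_1}M$ one clears a further factor $M^{d-2}/x_1^{d-2}$ — but since you defer precisely this verification, which is the whole content of the lemma, your write-up is not yet complete as it stands; carrying out the direct substitution instead makes the computation a one-liner.
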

\begin{proof}
It is enough to substitute $z_0 = \frac{x_0}{x_1}z_1$ in the equation of $X_d$ and to clear the denominators of the resulting polynomial. 
\end{proof}

For quartic hypersurfaces with a double point we get a version of Theorem \ref{Quart_th} with a slightly less restrictive condition on $n$. 

\begin{Proposition}\label{sing1}
Let $X_d\subset\mathbb{P}^{n+1}$ be a hypersurface of degree $d$ having multiplicity $d-2$ along a codimension two linear subspace $\Lambda\subset\mathbb{P}^{n+1}$, with a point $p\in X_d\setminus\Lambda$, a double point $q\in X_d\setminus\Lambda$ and otherwise general. If either $d = 4$ and $n\geq 2$ or $d = 5$ and $n\geq 4$ then $X_d$ is unirational.  
\end{Proposition}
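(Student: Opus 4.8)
I would begin, as the placement after Lemma \ref{sing0} suggests, by reducing to a divisor in a product of projective spaces. After a linear change of coordinates putting $\Lambda=\{z_0=z_1=0\}$ and, say, $p=[p_0:\dots:p_{n+1}]$ and $q$ away from the point $[1:0:\dots:0]$, the expansion of $X_d$ along $\Lambda$ has the normal form of Lemma \ref{sing0} (just regroup the coefficients), so $X_d$ is birational to $Y:=Y_{(d-2,2)}=\{\sum_{i=0}^{d-2}x_0^{d-2-i}x_1^iB_i=0\}\subset\mathbb{P}^1_{(x_0,x_1)}\times\mathbb{P}^n$ with the $B_i$ quadrics in $w$. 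The birational map of Lemma \ref{sing0} is an isomorphism off $\Lambda\cup\{[1:0:\dots:0]\}$, so (for $X_d$ general, $q\neq p$ and neither is $[1:0:\dots:0]$) the smooth point $p$ and the double point $q$ are carried to a smooth $k$-point $\widetilde p$ and a multiplicity-two point $\widetilde q$ of $Y$. The first projection $\rho\colon Y\to\mathbb{P}^1$ realises $Y$ as a fibration in $(n-1)$-dimensional quadrics over the rational base $\mathbb{P}^1$, so by Proposition \ref{Enr} and Remark \ref{QC} it is enough to exhibit a unirational subvariety $Z\subset Y$ with $\rho|_Z\colon Z\to\mathbb{P}^1$ dominant.

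\textbf{The case $d=5$.} Here $Y$ has bidegree $(3,2)$ and I would run the argument of Proposition \ref{p0}. Using \cite[Theorem 1.1(ii)]{Ott15} and the rational map $\eta$ of \textup{(\ref{eta})}, pass to the small modification $\eta^{+}\colon Y\dasharrow Y^{+}\subset\mathbb{P}^2\times\mathbb{P}^n$ cut out by the maximal minors of the $4\times 3$ matrix \textup{(\ref{Ott_Mx})}; the fibre $F_{\overline u}$ of the first projection $Y^{+}\to\mathbb{P}^2$ over a point $\overline u$ is a complete intersection of two quadrics in $\mathbb{P}^n$. Take $\overline u=\eta(\widetilde q)$. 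Since $\widetilde q$ lies off the small locus of $\eta^{+}$, the complete intersection $F_{\overline u}$ has a $k$-rational node at $\widetilde q$; projecting from that node exhibits $F_{\overline u}$ as birational to a cubic hypersurface $C\subset\mathbb{P}^{n-1}$ (solve one of the two quadratic equations linearly along the node direction and substitute into the other). For $n\geq 4$ the hypersurface $C$ has dimension $n-2\geq 2$, carries a $k$-point and is not a cone because $X_5$ is general, hence it is unirational by \cite[Theorem 1.2]{Kol02} (Remark \ref{QC}); therefore $F_{\overline u}$ is unirational. As in Proposition \ref{p0}, the strict transform $\widetilde F_{\overline u}\subset Y$ of $F_{\overline u}$ is a determinantal locus in $\mathbb{P}^1\times\mathbb{P}^n$ that dominates $\mathbb{P}^1$; taking $Z=\widetilde F_{\overline u}$ and applying Proposition \ref{Enr} finishes this case.

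\textbf{The case $d=4$.} Now $Y$ has bidegree $(2,2)$, so besides $\rho\colon Y\to\mathbb{P}^1$ the second projection $Y\to\mathbb{P}^n$ is a double cover branched along the quartic $\{B_1^2-4B_0B_2=0\}$. A short computation with the expansion of $X_4$ along $\Lambda$ shows that the double point $q$ forces the fibre of $\rho$ through $\widetilde q$ to be a quadric cone with $k$-rational vertex $\widetilde q$ — equivalently, $\widetilde q$ lies over a node of the branch quartic. Projecting the base $\mathbb{P}^n$ of the double cover from that node presents $Y$ as birational to a conic bundle over $\mathbb{P}^{n-1}$ (the preimage of a line through the node is, after normalisation, a rational curve). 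For $n\geq 2$ this conic bundle is $k$-unirational: when $n\geq 3$ one descends once more using $\widetilde p$, and when $n=2$ it is a conic bundle over $\mathbb{P}^1$ whose singular fibres coming from the special shape $B_1^2-4B_0B_2$ of the branch quartic split over $k$, so that the $k$-point $\widetilde p$ forces rationality; alternatively, the presence of the double point $\widetilde q$ places $Y$ within the scope of the unirationality criterion for bidegree-$(2,2)$ divisors of \cite[Theorem 1.8]{Mas22}. In either description $Y$ is unirational, and the birational equivalence $X_4\sim Y$ concludes.

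\textbf{Main obstacle.} The delicate point is the case $d=4$: proving that the resulting conic bundle (equivalently, the degenerate fibre of $\rho$ through $\widetilde q$) is $k$-rational for the small values of $n$, for $X_4$ only assumed general. This is precisely where \emph{both} distinguished points enter — the double point $q$ supplies the degenerate fibre with a $k$-rational vertex and the smooth point $p$ supplies a rational point on the relevant fibre — and where the genericity hypothesis is used; by Remark \ref{RP_Nec} the conclusion genuinely fails if one drops the point $p$, so the argument cannot avoid it. A minor additional point is to check, for $d=5$, that $\widetilde F_{\overline u}$ dominates $\mathbb{P}^1$, which is the same computation as in Proposition \ref{p0}.
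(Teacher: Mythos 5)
There is a genuine gap, and it is in the setup. The reduction of Lemma \ref{sing0} produces a divisor of bidegree $(d-2,2)$ precisely because the double point $q$ is placed \emph{at} $[1:0:\dots:0]$: the map is $[z_0:\dots:z_{n+1}]\mapsto ([z_0:z_1],[z_1:\dots:z_{n+1}])$, whose second factor is the projection from $q$, and the vanishing of the $z_0^2$ and $z_0L$ parts of the coefficient of $z_0^{d-2}$ in the normal form is exactly the condition $\mult_q X_d\geq 2$. If, as you propose, $q$ is "away from $[1:0:\dots:0]$", a general $X_d$ is only birational to a divisor of bidegree $(d,2)$ (or $(d-1,2)$ using the smooth point, as in Lemma \ref{l0}), not $(d-2,2)$. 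So the double point is \emph{consumed} by the reduction — it is what drops the fibre degree over $\mathbb{P}^1$ — and for $X_d$ otherwise general there is no surviving double point $\widetilde q$ on $Y_{(d-2,2)}$. Both of your fibre arguments are built on this nonexistent $\widetilde q$: for $d=5$ the claim that $F_{\overline u}$ with $\overline u=\eta(\widetilde q)$ is nodal, and for $d=4$ the claim that the fibre of $\rho$ through $\widetilde q$ is a cone with $k$-rational vertex. (A further slip: projecting a complete intersection of two quadrics from a \emph{singular} point yields a quadric, not a cubic; the cubic $\{q_1\ell_2-q_2\ell_1=0\}$ arises from projecting from a smooth point.) The $d=4$, $n=2$ discussion via splitting of singular fibres of a conic bundle is also too vague to stand on its own.

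The datum that does survive to $Y_{(d-2,2)}$ is the smooth point $p\mapsto p'$, and that is what the paper uses. For $d=5$ one runs the argument of Proposition \ref{p0} verbatim on the bidegree $(3,2)$ divisor, taking $\overline u=\eta(p')$: the fibre $F_{\overline u}$ is then a \emph{smooth} complete intersection of two quadrics in $\mathbb{P}^n$ with a $k$-point, unirational for $n\geq 4$ by \cite[Proposition 2.3]{CSS87}, and its strict transform dominates $\mathbb{P}^1_{(x_0,x_1)}$, so Proposition \ref{Enr} applies. For $d=4$ one intersects $Y_{(2,2)}$ with $\mathbb{P}^1\times H$ for $H$ a general $2$-plane through the projection of $p'$, obtaining a bidegree $(2,2)$ divisor in $\mathbb{P}^1\times\mathbb{P}^2$ with a $k$-point, which is unirational by \cite[Corollary 8]{KM17}; this multisection dominates $\mathbb{P}^1$ and Proposition \ref{Enr} concludes for every $n\geq 2$, with no case distinction needed. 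Your overall template (find a unirational multisection of the quadric bundle $\rho$ and invoke Proposition \ref{Enr} and Remark \ref{QC}) is the right one; the construction of the multisection needs to be redone using $p'$ rather than a phantom double point.
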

\begin{proof}
The equation of $X_d\subset\mathbb{P}^{n+1}$ can be written as in Lemma \ref{sing0}. First, consider the case $d = 4$. By Lemma \ref{sing0} $X_4$ is birational to a divisor $Y_{(2,2)}\subset \mathbb{P}^1_{(x_0,x_1)}\times \mathbb{P}^{n}_{(z_1,\dots, z_{n+1})}$ of bidegree $(2,2)$. The point $p\in X_d\setminus\Lambda$ yields a point $p'\in Y_{(2,2)}$. Let $H$ be a general $2$-plane in $\mathbb{P}^{n}_{(z_1,\dots, z_{n+1})}$ through the projection of $p'$ and set $\Gamma = \mathbb{P}^1_{(x_0,x_1)}\times H$. Then $W = Y_{(2,2)}\cap\Gamma$ is a divisor of bidegree $(2,2)$ in $\mathbb{P}^1_{(x_0,x_1)} \times\mathbb{P}^2$ which by \cite[Corollary 8]{KM17} in unirational. Since $W$ maps dominantly onto $\mathbb{P}^1_{(x_0,x_1)}$ to conclude it is enough to apply Proposition \ref{Enr}.

Now, let $d = 5$. Then by Lemma \ref{sing0} $X_5$ is birational to a divisor $Y_{(3,2)}\subset \mathbb{P}^1_{(x_0,x_1)}\times \mathbb{P}^{n}_{(z_1,\dots, z_{n+1})}$ of bidegree $(3,2)$ and arguing as in the proof of Proposition \ref{p0} we get that $Y_{(3,2)}$ is unirational. 
\end{proof}

\begin{Lemma}\label{1a}
Let $D\subset\mathbb{P}^{n-h}\times\mathbb{P}^h$ be divisor of bidegree $(1,a)$. Then $D$ is unirational. 
\end{Lemma}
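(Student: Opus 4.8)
The plan is to use the second projection $\mathrm{pr}_2\colon D\to\mathbb{P}^h$, which realises $D$ birationally as a projective bundle over $\mathbb{P}^h$; in fact this shows the stronger statement that $D$ is rational.

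First I would fix bihomogeneous coordinates $(x_0:\dots:x_{n-h})$ on $\mathbb{P}^{n-h}$ and $(y_0:\dots:y_h)$ on $\mathbb{P}^h$ and write the defining equation of $D$ in the form $\sum_{i=0}^{n-h}x_i\,G_i(y_0,\dots,y_h)=0$, with $G_i\in k[y_0,\dots,y_h]_a$. Since $D$ is an effective divisor this polynomial is not identically zero, so at least one of the $G_i$, say $G_0$, is nonzero; consequently the common zero locus $\{G_0=\dots=G_{n-h}=0\}\subset\mathbb{P}^h$ is a proper closed subset. Restricting the second projection gives a morphism $\pi\colon D\to\mathbb{P}^h$ whose fibre over a point $y$ outside this locus is the hyperplane $\{x\in\mathbb{P}^{n-h}\mid \sum_i x_iG_i(y)=0\}\cong\mathbb{P}^{n-h-1}$. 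Equivalently, the generic fibre of $\pi$ is a hyperplane in $\mathbb{P}^{n-h}_{k(\mathbb{P}^h)}$, hence isomorphic to $\mathbb{P}^{n-h-1}_{k(\mathbb{P}^h)}$.

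It then follows that $k(D)$ is the function field of a projective space over $K:=k(\mathbb{P}^h)$, i.e.\ a purely transcendental extension of $K$; since $K$ is itself purely transcendental over $k$, the field $k(D)$ is purely transcendental over $k$ of transcendence degree $(n-h-1)+h=n-1=\dim D$. Thus $D$ is rational, and in particular unirational. In keeping with the constructive spirit of the paper one may exhibit the birational map explicitly: assuming $G_0\not\equiv 0$, the map $\mathbb{P}^{n-h-1}_{(x_1,\dots,x_{n-h})}\times\mathbb{P}^h_{(y_0,\dots,y_h)}\dasharrow D$ sending $\big((x_1:\dots:x_{n-h}),(y_0:\dots:y_h)\big)$ to $\big((-\sum_{i\ge1}x_iG_i(y):x_1G_0(y):\dots:x_{n-h}G_0(y)),\,(y_0:\dots:y_h)\big)$ has as inverse the map that forgets the coordinate $x_0$, so it is birational onto $D$.

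I do not expect a real obstacle here. The only point that needs a word of care is that the generic fibre of $\pi$ be a genuine hyperplane rather than all of $\mathbb{P}^{n-h}$, which is exactly the statement that the $G_i$ do not all vanish identically — automatic because $D$ is a divisor; and if one does not wish to assume $D$ irreducible, the explicit map above still dominates the relevant component of $D$.
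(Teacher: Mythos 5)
Your proof is correct, and it takes a genuinely different (and more economical) route than the paper's. The paper uses the direct projection argument only in the special case $n-h=1$, where it observes that $\pi_{2|D}\colon D\to\mathbb{P}^h$ is birational; for $n-h\geq 2$ it instead introduces the incidence variety $W=\{(z,L)\mid z\in\pi_1^{-1}(L)\}\subset D\times\mathbb{G}(1,n-h)$ of points of $D$ lying over lines $L\subset\mathbb{P}^{n-h}$, notes that the generic fibre of $W\to\mathbb{G}(1,n-h)$ is a divisor of bidegree $(1,a)$ in $\mathbb{P}^1\times\mathbb{P}^h$ over the (rational) function field of the Grassmannian --- hence rational by the $n-h=1$ case --- and concludes that $W$ is rational and dominates $D$ with positive-dimensional fibres, so that $D$ is only shown to be unirational. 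You observe instead that for every $n-h\geq 1$ the fibre of $\pi_2$ over the generic point of $\mathbb{P}^h$ is already a hyperplane in $\mathbb{P}^{n-h}_{k(\mathbb{P}^h)}$, i.e.\ a $\mathbb{P}^{n-h-1}$ over a purely transcendental extension of $k$, so $D$ is birational to $\mathbb{P}^{n-h-1}\times\mathbb{P}^h$. This avoids the Grassmannian detour, is explicit, and yields the stronger conclusion that $D$ is \emph{rational}; rationality is more than the paper needs in its application (Theorem on quartics containing an $h$-plane, where only a unirational subvariety of $\widetilde{E}$ dominating the base of the cubic fibration is required), but it is a genuine improvement of the lemma. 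The one point both you and the paper leave implicit is possible reducibility of $D$ (e.g.\ when all the $G_i$ have a common factor, $D$ splits off a bidegree $(1,0)$ piece); you at least flag this and note that your parametrization dominates the relevant component, which is all that is used downstream.
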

\begin{proof}
If $n-h = 1$ then the projection $\pi_{2|D}:D\rightarrow\mathbb{P}^h$ is birational. Assume $n-h\geq 2$ and consider the following incidence variety
\[
  \begin{tikzpicture}[xscale=1.5,yscale=-1.5]
    \node (A0_1) at (1, 0) {$W = \{(z,L)\: | \: z\in \pi_1^{-1}(L)\}\subseteq D\times \mathbb{G}(1,n-h)$};
    \node (A1_0) at (0, 1) {$D$};
    \node (A1_2) at (2, 1) {$\mathbb{G}(1,n-h)$};
    \path (A0_1) edge [->]node [auto] {$\scriptstyle{\beta}$} (A1_2);
    \path (A0_1) edge [->]node [auto,swap] {$\scriptstyle{\alpha}$} (A1_0);
  \end{tikzpicture}
  \]
where $\mathbb{G}(1,n-h)$ is the Grassmannian parametrizing lines in $\mathbb{P}^{n-h}$. The generic fiber $W_{\eta}$ of $\alpha:W\rightarrow\mathbb{G}(1,n-h)$ is a divisor $W_{\eta}$ of bidegree $(1,a)$ in $\mathbb{P}^1_{k(t_1,\dots,t_{2(n-h-1)})}\times \mathbb{P}^h$, and hence $W_{\eta}$ is rational over $k(t_1,\dots,t_{2(n-h-1)})$. Therefore, $W$ is rational and since $\beta:W\rightarrow D$ is dominant we get that $D$ is unirational. 
\end{proof}

The following is our main result on the unirationality of quartic hypersurface containing a linear subspace.

\begin{thm}\label{Quart_th2}
Let $X_4\subset\mathbb{P}^{n+1}$ be an irreducible quartic hypersurface containing an $h$-plane $\Lambda\subset\mathbb{P}^{n+1}$. Assume that $\Lambda\cap\Sing(X_4)$ has dimension at most $h-2$ and that $X_4$ is not a cone over a smaller dimensional quartic. If $n\geq 3$ and $h\geq 2$ then $X_4$ is unirational.  
\end{thm}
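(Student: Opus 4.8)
The plan is to resolve the linear projection $\pi_\Lambda : X_4 \dashrightarrow \mathbb{P}^{n-h}$, so that $X_4$ becomes birational to a cubic fibration, and then to combine Koll\'ar's unirationality theorem for cubics (Remark \ref{QC}) with the unirationality criterion of Proposition \ref{Enr}, using the exceptional divisor of the blow-up of $\Lambda$ as the required unirational subvariety.

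First I would choose coordinates so that $\Lambda = \{z_0 = \dots = z_{n-h} = 0\}$ and $X_4 = \{\sum_{i=0}^{n-h} z_i G_i = 0\}$ with $G_i \in k[z_0,\dots,z_{n+1}]_3$; since $\mult_\Lambda X_4 = 1$ the restricted cubics $g_i := G_i|_\Lambda \in k[z_{n-h+1},\dots,z_{n+1}]_3$ are not all zero. Lemma \ref{hyp} (with $d = 4$, $m = 1$) identifies $\widetilde X_4 := \Bl_\Lambda X_4$ with a divisor in $\mathcal T_{1,0,\dots,0}$, and composition with $\mathcal T_{1,0,\dots,0}\to\mathbb{P}^{n-h}$ gives a fibration $\widetilde\pi : \widetilde X_4 \to \mathbb{P}^{n-h}$ resolving $\pi_\Lambda$; by the last coordinate of the multidegree the generic fibre $\widetilde X_{4,\eta}$ is a cubic hypersurface in $\mathbb{P}^{h+1}_K$ with $K := k(\mathbb{P}^{n-h})$, of dimension $h \geq 2$, and geometrically it is the residual cubic $C_P := \overline{(X_4\cap P)\setminus\Lambda}$ in a general $(h+1)$-plane $P\supseteq\Lambda$. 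The same lemma gives that the exceptional divisor $\widetilde E\subset\widetilde X_4$ is the divisor $\{\sum_{i=0}^{n-h} x_i g_i(y) = 0\}$ of bidegree $(1,3)$ in $\mathbb{P}^{n-h}_{(x)}\times\mathbb{P}^h_{(y)}$, and that $\widetilde\pi|_{\widetilde E}$ is the first projection. Differentiating $F$ shows $\Sing(X_4)\cap\Lambda = \{g_0 = \dots = g_{n-h} = 0\}$ inside $\Lambda\cong\mathbb{P}^h$, so the hypothesis $\dim(\Lambda\cap\Sing X_4)\leq h-2$ says exactly that the $g_i$ have common zero locus of codimension $\geq 2$; in particular they have no common factor, whence $\sum_i x_i g_i$ is irreducible and $\widetilde\pi|_{\widetilde E} : \widetilde E\to\mathbb{P}^{n-h}$ is dominant, and $\widetilde E$ is unirational by Lemma \ref{1a}.

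The crux of the proof is to show that $\widetilde X_{4,\eta}$ is a geometrically integral cubic hypersurface over $K$ that is \emph{not} a cone; equivalently, that the general residual cubic $C_P\subset P\cong\mathbb{P}^{h+1}$ is irreducible and not a cone. I would argue this from the hypotheses that $X_4$ is irreducible, is not a cone over a smaller-dimensional quartic, and satisfies $\dim(\Sing X_4\cap\Lambda)\leq h-2$, as follows: every point of $X_4\setminus\Lambda$ lies on a unique $(h+1)$-plane $P\supseteq\Lambda$, and $X_4\cap P = \Lambda\cup C_P$; if $C_P$ were reducible, or were an irreducible cone with vertex $V_P$, then as $P$ ranges over $\mathbb{P}^{n-h}$ the linear components of $C_P$, respectively the $V_P$, would sweep out a subvariety of $X_4$, and a dimension count (together with a Bertini-type analysis of $\Sing(X_4\cap P)$ for general $P$, delicate because $\Lambda$ sits in the base locus of the pencil of planes through it) would force either that $X_4$ is itself a cone or that $\Sing(X_4)\cap\Lambda$ contains a subvariety of dimension $\geq h-1$ — the latter because $V_P\cap\Lambda$, of dimension $\geq\dim V_P-1$, would land in $\Sing(X_4)$ after intersecting with a general $P$. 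Either way one contradicts the hypotheses. I expect this control of the degenerations of the residual cubic under the stated, non-generic, hypotheses to be the main obstacle of the argument.

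Granting this, I would finish by applying Proposition \ref{Enr} with $f = \widetilde\pi : \widetilde X_4 \to Y := \mathbb{P}^{n-h}$, which is a fibration over a rational, hence unirational, base, with unirational subvariety $Z := \widetilde E$ and $f|_Z$ dominant. The generic fibre $X_{Z,\eta}$ of $\widetilde X_4\times_Y Z\to Z$ is $\widetilde X_{4,\eta}\times_K k(Z)$, a cubic hypersurface of dimension $h\geq 2$ over the infinite field $k(Z)$, and it is still not a cone since the vertex of a cone is defined over the ground field in characteristic zero; by Remark \ref{QC} it is therefore unirational over $k(Z)$ if it has a $k(Z)$-rational point, while conversely a unirational variety over the infinite field $k(Z)$ has a rational point, so the hypotheses of Proposition \ref{Enr} are met. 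Hence $\widetilde X_4$ is unirational, and therefore so is $X_4$. (Alternatively one could first lower $n$ by projecting from a general $(h+1)$-plane containing $\Lambda$, whose generic fibre is again a quartic of the same type containing the $h$-plane $\Lambda$ inside a $\mathbb{P}^{h+2}$, and run the above only in the base case $h = n-1$; but the direct argument already covers all $n\geq 3$, $h\geq 2$.)
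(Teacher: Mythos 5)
Your proposal follows the same route as the paper's proof: blow up $\Lambda$ via Lemma \ref{hyp} to obtain a fibration in $h$-dimensional cubics over $\mathbb{P}^{n-h}$, take $Z=\widetilde{E}$ (a divisor of bidegree $(1,3)$, unirational by Lemma \ref{1a}) in Proposition \ref{Enr}, and use the hypothesis $\dim(\Lambda\cap\Sing(X_4))\leq h-2$ to guarantee that $\widetilde{E}$ is irreducible and dominates the base. Your argument for this last point --- identifying $\Sing(X_4)\cap\Lambda$ with the common zero locus of the restrictions $g_i$ of the cubics $G_i$ to $\Lambda$, so that the $g_i$ have no common factor and $\sum_i x_ig_i$ is irreducible and dominant over $\mathbb{P}^{n-h}$ --- is exactly the content of the paper's final paragraph, which phrases the same reduction contrapositively (if dominance fails the equation takes the special form $\sum z_i(C_i+c_iP)$, forcing a divisor of $\Lambda$ inside $\Sing(X_4)$). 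The one step you single out as ``the main obstacle,'' namely that the generic fibre of the cubic fibration is geometrically integral and not a cone (needed to invoke Remark \ref{QC} inside Proposition \ref{Enr}), is left as a sketch in your write-up; be aware, though, that the paper's proof does not address this point at all --- it passes directly from dominance of $\pi_{|\widetilde{E}}$ to the conclusion --- so relative to the paper you are, if anything, being more scrupulous in flagging it. A fully self-contained argument would still require carrying out the degeneration analysis you outline (or an equivalent one), since for a cone the existence of a rational point does not imply unirationality.
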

\begin{proof}
Consider the blow-up $\widetilde{X}_4$ along $\Lambda$ in Lemma \ref{hyp}. Under our hypotheses $X_4$ can not be singular at the general point of $\Lambda$. So the exceptional divisor $\widetilde{E}\subset\mathbb{P}^{n-h}\times\mathbb{P}^h$ has bidegree $(1,3)$, and Lemma \ref{1a} yields that $\widetilde{E}$ is unirational. The projection $\pi:\widetilde{X}_4\rightarrow\mathbb{P}^{n-h}$ is a fibration in $h$-dimensional cubic hypersurfaces. Therefore, thanks to Proposition \ref{Enr} and Remark \ref{QC} to conclude it is enough to show that $\pi_{|\widetilde{E}}:\widetilde{E}\rightarrow \mathbb{P}^{n-h}$ is dominant.

Since $X_4$ is not a cone over a smaller dimensional quartic by the expression of $\widetilde{X}_4$ in (\ref{st}) we see that $\pi_{|\widetilde{E}}:\widetilde{E}\rightarrow \mathbb{P}^{n-h}$ is not dominant if and only if the equation of $X_d$ is of the following form 
$$
X_d = \left\{\sum_{i=0}^{n-h}z_i(C_i(z_0,\dots,z_{n-h})+c_iP) = 0\right\}\subset\mathbb{P}^{n+1}
$$
where $C_i\in k[z_0,\dots,z_{n-h}]_{d-1}$, $c_i\in k$ and $P\in k[z_0,\dots,z_{n+1}]_{d-1}$. In particular,
$$Z = \{z_0 = \dots = z_{n-h} = P = 0\}\subset \Lambda\cap\Sing(X_d)$$
and since $\dim(Z)\geq h-1$ we get a contradiction. 
\end{proof}

\begin{Corollary}\label{quartsmooth}
Assume that $h\geq 2$, $n > 2h-1$ and let $X_4\subset\mathbb{P}^{n+1}$ be a smooth quartic hypersurface containing an $h$-plane. Then $X_4$ is unirational. 
\end{Corollary}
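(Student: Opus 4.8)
The plan is to obtain Corollary~\ref{quartsmooth} as an immediate consequence of Theorem~\ref{Quart_th2}; the only real point is that smoothness of $X_4$ makes every hypothesis of that theorem automatic, so the proof reduces to checking its inputs one by one for the $h$-plane $\Lambda\subset X_4$ supplied by hypothesis. First I would note that $X_4$ is irreducible, since a smooth hypersurface of positive dimension is connected, hence irreducible; observe also that $n>2h-1$ together with $h\geq 2$ forces $n\geq 4$, so in particular $n\geq 3$ as required by Theorem~\ref{Quart_th2}. Next I would record that $X_4$ is not a cone over a lower-dimensional quartic: all first partial derivatives of a form of degree $4\geq 2$ vanish at the vertex of such a cone, so a cone of this type is singular there, contradicting smoothness. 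Finally, since $X_4$ is smooth we have $\Sing(X_4)=\emptyset$, so $\Lambda\cap\Sing(X_4)=\emptyset$ has dimension $-1\leq h-2$ because $h\geq 2$. With $n\geq 3$, $h\geq 2$, and these three facts in place, Theorem~\ref{Quart_th2} applies verbatim and yields the unirationality of $X_4$.

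I would then add a short paragraph explaining why the numerical condition $n>2h-1$ is not merely convenient but is exactly the range in which the corollary has content. Writing $\Lambda=\{z_0=\dots=z_{n-h}=0\}$ and $X_4=\{\sum_{i=0}^{n-h}z_iG_i=0\}$ with $G_i\in k[z_0,\dots,z_{n+1}]_3$, a direct computation of the partial derivatives along $\Lambda$ shows that $X_4$ is smooth at every point of $\Lambda$ if and only if the $n-h+1$ cubic forms $G_0|_\Lambda,\dots,G_{n-h}|_\Lambda$ on $\Lambda\cong\mathbb{P}^h$ have no common zero. This is possible only when $n-h+1>h$, i.e. $n>2h-1$; conversely, for a general choice of the $G_i$ it does hold. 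Thus for $n\leq 2h-1$ no smooth quartic containing an $h$-plane exists, and the corollary is stated in the optimal range. (This remark is not strictly needed for the proof, but I think it is worth including to justify the shape of the hypothesis.)

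There is essentially no obstacle here: the entire substance is Theorem~\ref{Quart_th2}, and the only steps deserving even a sentence of justification are the two elementary reductions ``smooth $\Rightarrow$ not a cone'' and ``smooth $\Rightarrow$ $\Lambda\cap\Sing(X_4)=\emptyset$''. Accordingly I would keep the proof to the three-line verification above rather than reworking any part of the argument of Theorem~\ref{Quart_th2} itself.
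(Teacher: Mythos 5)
Your proof is correct and follows exactly the paper's route: the corollary is deduced immediately from Theorem~\ref{Quart_th2} after observing that smoothness makes all of its hypotheses (irreducibility, not a cone, $\Lambda\cap\Sing(X_4)=\emptyset$) automatic, with your closing remark on the sharpness of $n>2h-1$ matching the paper's own observation that this inequality is what makes a smooth quartic through an $h$-plane exist at all. You are simply more explicit than the paper's two-line argument, which is fine.
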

\begin{proof}
Note that $n > 2h-1$ yields that a general quartic containing a fixed $h$-plane is smooth. The claim follows from Theorem \ref{Quart_th2}. 
\end{proof}

\section{Divisors in products of projective spaces and Quintics}\label{sec3}

We now study the unirationality of quintic hypersurfaces that are singular along linear subspaces and of divisors of bidegree $(3,2)$ in products of projective spaces. We begin with a series of preliminary results that we will need later on. 

\begin{Lemma}\label{LCub}
Let $\mathcal{C}\rightarrow W$ be a fibration in $m$-dimensional cubic hypersurfaces with $m\geq 2$ and $W$ a rational variety over a $C_r$ field $k$. If the general fiber of $\mathcal{C}\rightarrow W$ does not have a triple point and
$$
m > 3^{r+\dim(W)} - 2
$$
then $\mathcal{C}$ is unirational.
\end{Lemma}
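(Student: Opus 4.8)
The plan is to reduce, fiber by fiber, to the situation of a single cubic hypersurface over a suitable $C_s$ field and then invoke Koll\'ar's unirationality criterion for cubics (Remark \ref{QC}) together with the relative unirationality machinery of Proposition \ref{Enr}. First I would consider the generic fiber $\mathcal{C}_\eta$ of $\mathcal{C}\rightarrow W$: it is an $m$-dimensional cubic hypersurface defined over the function field $K = k(W)$. Since $W$ is rational over $k$, we have $K\cong k(t_1,\dots,t_{\dim(W)})$, and by iterating the fact that $k$ being $C_r$ implies $k(t)$ is $C_{r+1}$ (Remark \ref{lang}) we get that $K$ is a $C_{r+\dim(W)}$ field.

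Next I would use the $C_{r+\dim(W)}$ property to produce a $K$-rational point on $\mathcal{C}_\eta$. The cubic form defining $\mathcal{C}_\eta$ is a homogeneous degree $3$ polynomial in $m+2$ variables; by the definition of a $C_s$ field (Remark \ref{lang}), it has a nontrivial zero in $K^{m+2}$ provided $m+2 > 3^{r+\dim(W)}$, which is exactly our hypothesis $m > 3^{r+\dim(W)}-2$. Thus $\mathcal{C}_\eta$ acquires a $K$-rational point. Since $m\geq 2$ and, by hypothesis, the general fiber has no triple point, $\mathcal{C}_\eta$ is a cubic hypersurface of dimension at least two with a point and is not a cone; hence by Remark \ref{QC} (i.e.\ \cite[Theorem 1.2]{Kol02}) it is unirational over $K$.

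Finally I would assemble the pieces. Unirationality of $\mathcal{C}_\eta$ over $k(W)$ means there is a dominant rational map from a projective space over $k(W)$ to $\mathcal{C}_\eta$, which spreads out to a dominant rational map from $\mathbb{P}^N\times W$ (for suitable $N$) to $\mathcal{C}$; since $W$ is rational over $k$, so is $\mathbb{P}^N\times W$, and therefore $\mathcal{C}$ is unirational over $k$. Alternatively one can phrase this directly through Proposition \ref{Enr} with $Y = W$, $X = \mathcal{C}$ and $Z$ a rational subvariety mapping dominantly to $W$ obtained from a rational section coming from the $C_s$-point: the generic fiber has a $k(W)$-point and is therefore unirational, so $\mathcal{C}$ is unirational. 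The only mild subtlety — and the step to be careful about — is ensuring that the "general fiber has no triple point" hypothesis genuinely guarantees that the generic fiber $\mathcal{C}_\eta$ is not a cone (a cubic cone over a point would be excluded, but one must rule out cones over positive-dimensional bases too); this follows because a cubic hypersurface which is a cone has a point of multiplicity $3$ at the vertex, contradicting the absence of triple points on the general, hence generic, fiber.
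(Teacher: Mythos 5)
Your proof is correct and follows essentially the same route as the paper: pass to the generic fiber over $k(W)\cong k(t_1,\dots,t_{\dim(W)})$, use that this field is $C_{r+\dim(W)}$ to produce a rational point on the cubic form in $m+2$ variables, and apply Koll\'ar's theorem before spreading out over the rational base $W$. Your added remark that the absence of triple points on the general fiber rules out the generic fiber being a cone (over a vertex of any dimension) is a correct and welcome clarification of a point the paper leaves implicit.
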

\begin{proof}
The variety $\mathcal{C}$ is birational to an $m$-dimensional cubic hypersurface $\mathcal{C}'$ over the function field $F = k(t_1,\dots,t_{\dim(W)})$. Since $m+2 > 3^{r+\dim(W)}$ Remark \ref{lang} yields that $\mathcal{C}'$ has a point. Hence, by \cite[Theorem 1]{Kol02} $\mathcal{C}'$ is unirational over $F$ and so $\mathcal{C}$ is unirational.
\end{proof}

\begin{Lemma}\label{Q_LS}
Let $Q^{N-1}\subset\mathbb{P}^{N}$ be a smooth $(N-1)$-dimensional quadric hypersurface over a $C_r$ field. If 
$$
N-2s+1 > 2^r 
$$
for some $0\leq s \leq \lfloor\frac{N-1}{2}\rfloor$ then through any point of $Q^{N-1}$ there is an $s$-plane contained in $Q^{N-1}$.
\end{Lemma}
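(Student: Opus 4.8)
The plan is to prove the statement by induction on $s$, reducing the existence of an $s$-plane through a point of a smooth quadric to the existence of an $(s-1)$-plane through a point of the smooth quadric sitting at the base of the tangent cone.

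First I would record the relevant linear-algebra input. Fix a point $p\in Q^{N-1}$. Since $Q^{N-1}$ is smooth at $p$, after a linear change of coordinates over $k$ one may write $p=[1:0:\dots:0]$ and $Q^{N-1}=\{x_0x_1+\bar q(x_2,\dots,x_N)=0\}\subset\mathbb{P}^N$, where $\bar q$ is a nondegenerate quadratic form in the $N-1$ variables $x_2,\dots,x_N$; indeed smoothness at $p$ forces the ``linear part'' of the equation along $x_0$ to be nonzero, which lets one split off the hyperbolic plane $x_0x_1$, and nondegeneracy of $\bar q$ is then equivalent to smoothness of $Q^{N-1}$. Any $s$-plane $\Pi$ through $p$ is of the form $\Pi=\langle p,\Lambda\rangle$ with $\Lambda$ an $(s-1)$-plane contained in the hyperplane $\{x_0=0\}\cong\mathbb{P}^{N-1}$, and parametrizing the points of $\Pi$ as $[t:\lambda]$ one checks that $\Pi\subset Q^{N-1}$ if and only if $\lambda_1=0$ and $\bar q(\lambda)=0$ for all $\lambda\in\Lambda$; that is, if and only if $\Lambda$ is an $(s-1)$-plane contained in the smooth quadric $Q^{N-3}=\{\bar q=0\}\subset\mathbb{P}^{N-2}=\{x_0=x_1=0\}$, a quadric cut out by a quadratic form in exactly $N-1$ variables.

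Then I would run the induction on $s$. For $s=0$ the point $p$ itself is a $0$-plane, so there is nothing to prove. For the inductive step, assume the statement for $s-1$ and take $Q^{N-1}$, $p$ with $N-2s+1>2^r$ and $1\le s\le\lfloor\frac{N-1}{2}\rfloor$; note this forces $N-1-2s\ge 0$, so all the quadrics occurring below have nonnegative dimension. Since $s\ge 1$ we have $N-1\ge N-2s+1>2^r$, so the quadratic form defining $Q^{N-3}$, being in $N-1>2^r$ variables, has a nontrivial zero over $k$ by the $C_r$ hypothesis (Remark \ref{lang}); hence $Q^{N-3}$ has a $k$-point $q_0$. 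Moreover $(N-2)-2(s-1)+1=N-2s+1>2^r$ and $s-1\le\lfloor\frac{N-1}{2}\rfloor-1=\lfloor\frac{N-3}{2}\rfloor$, so the inductive hypothesis applies to $Q^{N-3}$ and $q_0$ and yields an $(s-1)$-plane $\Lambda\subset Q^{N-3}$. By the description in the previous paragraph, $\langle p,\Lambda\rangle$ is an $s$-plane through $p$ contained in $Q^{N-1}$, completing the induction. (Equivalently, one may skip the induction and simply iterate the tangent-cone construction $s$ times starting from $p$, reaching a smooth quadric $Q^{N-1-2s}\subset\mathbb{P}^{N-2s}$ defined by a form in $N-2s+1>2^r$ variables; this has a $k$-point by $C_r$, and coning it back up through the $s$ chosen vertices gives the $s$-plane.)

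The argument is essentially formal, so I do not expect a serious obstacle; the two points that need care are verifying that the base of the tangent cone is again a \emph{smooth} quadric defined over $k$ (handled by the normal-form computation above) and the bookkeeping of the two numerical inequalities, in particular checking that $Q^{N-3}$ itself has a rational point so that the inductive hypothesis can legitimately be invoked — this is exactly where the strict inequality $N-2s+1>2^r$ together with $s\ge 1$ is used, through $N-1\ge N-2s+1>2^r$.
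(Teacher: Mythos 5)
Your proof is correct and follows essentially the same route as the paper: iteratively (or inductively) passing to the smooth quadric at the base of the tangent cone at a chosen point and using the $C_r$ hypothesis to produce a rational point there, with the same numerics $N-2s+1>2^r$ governing the final (weakest) step. Your explicit normal form $x_0x_1+\bar q=0$ just makes precise the smoothness and descent bookkeeping that the paper's recursive construction leaves implicit.
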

\begin{proof}
Since $N-1 > 2^r - 2$ by Remark \ref{lang} $Q^{N-1}$ has a point. Hence $Q^{N-1}$ is rational and in particular the set of its rational points in dense. Take a point $x_0 \in Q^{N-1}$. 

The tangent space $T_{x_0}Q^{N-1}$ cuts out on $Q^{N-1}$ a cone with vertex $x_0$ over an $(N-3)$-dimensional quadric $Q^{N-3}$. Since $N-3 > 2^r - 2$ Remark \ref{lang} yields the existence of a point $x_1 \in Q^{N-3}$. The line $\left\langle x_0,x_1\right\rangle$ is therefore contained in $Q^{N-1}$. 

Now, $T_{x_1}Q^{N-3}\cap Q^{N-3}$ is a cone with vertex $x_1$ over an $(N-5)$-dimensional quadric $Q^{N-5}$ which again by Remark \ref{lang} has a point $x_2\in Q^{N-5}$. The $2$-plane $\left\langle x_0,x_1,x_2\right\rangle$ is contained in $Q^{N-1}$.

Proceeding recursively we have that $T_{x_{s-1}}Q^{N-2(s-1)-1}\cap Q^{N-2(s-1)-1}$ is a cone with vertex $x_{s-1}$ over an $(N-2s-1)$-dimensional quadric $Q^{N-2s-1}$. Since $N-2s+1 > 2^r$ the quadric $Q^{N-2s-1}$ has a point $x_s$ and the $s$-plane $\left\langle x_0,\dots,x_s\right\rangle$ is then contained in $Q^{N-1}$. 
\end{proof}

\begin{Lemma}\label{ciq}
Let $X = Q_1\cap\dots \cap Q_c\subset\mathbb{P}^N$ be a smooth complete intersection of quadrics. Assume that $X$ contains a $(c-1)$-plane $\Lambda$. Then $X$ is rational. 
\end{Lemma}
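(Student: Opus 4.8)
The plan is to project $X$ from the $(c-1)$-plane $\Lambda$ and to show that the resulting rational map $\pi_{\Lambda}\colon X\dashrightarrow\mathbb{P}^{N-c}$ is birational. Choose homogeneous coordinates $z_0,\dots,z_N$ on $\mathbb{P}^N$ with $\Lambda=\{z_c=\dots=z_N=0\}$. Since each $q_i$ lies in the ideal $(z_c,\dots,z_N)$ of $\Lambda$, we may write
$$
q_i=\sum_{j=c}^{N}z_j\,\ell_{ij}(z_0,\dots,z_{c-1})+Q_i'(z_c,\dots,z_N)
$$
with $\ell_{ij}$ linear forms and $Q_i'$ a quadratic form. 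Parametrising the $c$-plane through $\Lambda$ lying over $[w_c:\dots:w_N]\in\mathbb{P}^{N-c}$ by $(z_0:\dots:z_{c-1}:t)\mapsto[z_0:\dots:z_{c-1}:tw_c:\dots:tw_N]$ and substituting, $q_i$ becomes $t\bigl(\sum_{j=c}^{N}w_j\ell_{ij}(z_0,\dots,z_{c-1})+tQ_i'(w)\bigr)$. Hence, away from $\Lambda$ (i.e.\ for $t\neq 0$) the fibre of $\pi_\Lambda$ over $[w]$ is cut out inside the $\mathbb{P}^c$ with coordinates $(z_0:\dots:z_{c-1}:t)$ by the $c$ linear equations $\sum_{j}w_j\ell_{ij}(z)+tQ_i'(w)=0$, $i=1,\dots,c$; that is, by $c$ hyperplanes of this $\mathbb{P}^c$.

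The key step is to show that for general $[w]$ these $c$ hyperplanes are in general position, so that they meet in exactly one reduced point. Their coefficient matrix is $c\times(c+1)$, its first $c$ columns forming the matrix $M(w)$ whose $(i,k)$ entry is the coefficient of $z_k$ in $\sum_{j}w_j\ell_{ij}(z)$ and its last column being $\bigl(Q_i'(w)\bigr)_i$; so it suffices to prove $\det M(w)\not\equiv 0$. Granting this, for general $[w]$ the linear system has a unique solution, expressed by the maximal minors of the coefficient matrix and hence depending polynomially on $w$; this exhibits a rational section of $\pi_\Lambda$ and shows $\pi_\Lambda$ is generically injective on $X\setminus\Lambda$. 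Since $X$ is irreducible and, by a degree count, $X\neq\Lambda$, while $\dim X=N-c\geq\dim\Lambda=c-1$, we have $\Lambda\subsetneq X$ (and $N\geq 2c$), so $\{t\neq 0\}$ is dense in $X$; therefore $\pi_\Lambda$ is birational and $X$ is rational.

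The only input of the smoothness of $X$, and the main obstacle, is the claim $\det M(w)\not\equiv 0$, which I would deduce from a dimension count on the singular locus. A direct computation of the Jacobian of $(q_1,\dots,q_c)$ at a point $p=[p_0:\dots:p_{c-1}:0:\dots:0]\in\Lambda\subset X$ gives $\partial q_i/\partial z_k(p)=0$ for $k<c$ and $\partial q_i/\partial z_k(p)=\ell_{ik}(p_0,\dots,p_{c-1})$ for $k\geq c$; hence $X$ is smooth at $p$ if and only if the $c$ linear forms $\beta_i(p,\,\cdot\,):=\sum_{k\geq c}\ell_{ik}(p)\,w_k$ in the variables $w$ are linearly independent, and since $X$ is smooth this holds for every $p\in\Lambda$. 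On the other hand, the rows of $M(w)$ are precisely the coefficient vectors of the linear forms $\beta_i(\,\cdot\,,w)$ in the $p$-variables, so $\det M(w)\equiv 0$ would say that for every $w$ the forms $\{\beta_i(\,\cdot\,,w)\}_i$ are linearly dependent. Setting $T_i\colon k^{c}\to(k^{\,N-c+1})^{\ast}$, $p\mapsto\beta_i(p,\,\cdot\,)$, smoothness forces $\sum_i a_iT_i$ to be injective for every $(a_i)\neq 0$, so that $V_{(a)}:=\bigl(\operatorname{Im}\textstyle\sum_i a_iT_i\bigr)^{\perp}\subseteq k^{\,N-c+1}$ has dimension $N-2c+1$ for all $[a]\in\mathbb{P}^{c-1}$, whereas $\det M(w)\equiv 0$ would force $k^{\,N-c+1}=\bigcup_{[a]\in\mathbb{P}^{c-1}}V_{(a)}$. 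But the incidence variety $\{(w,[a]):w\in V_{(a)}\}$ has dimension $(c-1)+(N-2c+1)=N-c<N-c+1$, so it cannot surject onto $k^{\,N-c+1}$, a contradiction. This proves $\det M(w)\not\equiv 0$, and with it the rationality of $X$.
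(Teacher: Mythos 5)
Your proof is correct and takes essentially the same route as the paper: both parametrize $X$ by the $\mathbb{P}^{N-c}$ of $c$-planes $H\supset\Lambda$ and observe that the residual intersection $H\cap Q_i=\Lambda\cup H_i$ gives $c$ hyperplanes of $H\cong\mathbb{P}^c$ meeting in a single point, which yields the birational parametrization. Your determinant and incidence-variety argument is simply a careful justification of the step the paper dispatches with ``since $H$ is general these $(c-1)$-planes intersect in a point,'' using the smoothness of $X$ along $\Lambda$ exactly as the paper's opening remark intends.
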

\begin{proof}
Note that since $X$ is smooth $Q_i$ must be smooth along $X$. Let $H$ be a general $c$-plane containing $\Lambda$. Then $H$ intersects $Q_i$ along $\Lambda\cup H_i$ where $H_i$ is a $(c-1)$-plane. Hence we get $c$ linear subspaces of dimension $c-1$ of $H\cong \mathbb{P}^c$. 

Since $H$ is general these $(c-1)$-planes intersect in a point $x_H = H_1\cap\dots \cap H_c \in X$. To conclude it is enough to parametrize $X$ with the $\mathbb{P}^{N-c}$ of $c$-planes containing $\Lambda$. 
\end{proof}

\begin{Proposition}\label{ciqls}
Let $X = Q_1\cap\dots \cap Q_c\subset\mathbb{P}^{N}$ be a smooth complete intersection of quadrics over a $C_r$ field. If
$$
N-s(c+1)+1 > 2^rc
$$
then through any point of $X$ there is an $s$-plane contained in $X$.
\end{Proposition}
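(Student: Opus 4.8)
The plan is to construct, starting from the given point $x_0\in X$, an increasing flag of linear subspaces $\{x_0\}=\Lambda_0\subset\Lambda_1\subset\dots\subset\Lambda_s\subseteq X$ with $\dim\Lambda_j=j$, adding one point at a time, and to invoke the $C_r$ property exactly once per step. This is the natural generalization of the case $c=1$ treated in Lemma~\ref{Q_LS}: tangent hyperplanes get replaced by the simultaneous polar spaces of $Q_1,\dots,Q_c$, and the numerical hypothesis $N-s(c+1)+1>2^rc$ (which specializes to $N-2s+1>2^r$ when $c=1$) is exactly what keeps every step feasible.

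Write $q_i$ for a quadratic form cutting out $Q_i$ and $b_i$ for the associated symmetric bilinear form, so that $q_i(v)=b_i(v,v)$; since $\ch(k)=0$ the form $b_i$ is recovered from $q_i$, and in particular $q_i|_{V}\equiv 0$ for a linear subspace $V$ forces $b_i|_{V\times V}=0$. Suppose inductively that for some $1\le j\le s$ we have a $(j-1)$-plane $\Lambda_{j-1}\subseteq X$, with $V_{j-1}\subseteq k^{N+1}$ the $j$-dimensional cone over it; then $\Lambda_{j-1}\subseteq Q_i$ gives $q_i|_{V_{j-1}}\equiv 0$, hence $b_i|_{V_{j-1}\times V_{j-1}}=0$. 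Set
$$
W_{j-1}=\{\,z\in\P^N\ :\ b_i(v,z)=0 \text{ for all } v\in V_{j-1},\ i=1,\dots,c\,\},
$$
a linear subspace cut out by at most $cj$ linear equations, so $\dim W_{j-1}\ge N-cj$, and $\Lambda_{j-1}\subseteq W_{j-1}$ by the previous line. Choose a general linear subspace $M\subseteq W_{j-1}$ of dimension $\dim W_{j-1}-j$ (possible since $k$ is infinite); for dimension reasons $M\cap\Lambda_{j-1}=\emptyset$. Then
$$
\dim M\ \ge\ N-cj-j\ =\ N-j(c+1)\ \ge\ N-s(c+1),
$$
so $\dim M+1\ge N-s(c+1)+1>2^rc$, and Remark~\ref{lang}, applied to the $c$ quadratic forms $q_i|_M$ on $M$, produces a point $x_j\in X\cap M$; necessarily $x_j\notin\Lambda_{j-1}$.

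It remains to check that $\Lambda_j:=\Span{\Lambda_{j-1},x_j}$, a genuine $j$-plane, lies in $X$. Writing $V_j=V_{j-1}\oplus k\widetilde x_j$ for a lift $\widetilde x_j$ of $x_j$, one has for every $v\in V_{j-1}$ and $\mu\in k$
$$
q_i(v+\mu\widetilde x_j)=q_i(v)+2\mu\,b_i(v,\widetilde x_j)+\mu^2 q_i(\widetilde x_j)=0,
$$
the three terms vanishing respectively because $\Lambda_{j-1}\subseteq Q_i$, because $x_j\in W_{j-1}$, and because $x_j\in X\subseteq Q_i$. Thus $q_i|_{V_j}\equiv 0$ for all $i$, i.e. $\Lambda_j\subseteq X$, and the hypothesis $\Lambda_j\subseteq W_j$ needed at the next stage is again automatic. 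After $s$ steps, $\Lambda_s\subseteq X$ is an $s$-plane through $x_0$. One also checks the construction never degenerates: from $2^rc\ge 1$ the hypothesis gives $N-s(c+1)\ge 1$, whence $\dim W_{j-1}\ge N-cj\ge j$ for all $j\le s$, so each $M$ is well defined and nonempty.

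The only real content is the bookkeeping of the dimension count: each step costs $c$ from imposing the polarity conditions $b_i(V_{j-1},z)=0$ and $1$ more from having to avoid $\Lambda_{j-1}$, so the total budget spent after $s$ steps is $s(c+1)$ — this is the sole place the shape of the inequality is pinned down, and the binding case is the last step $j=s$. Everything else is the elementary bilinear-form identity above together with the multi-form $C_r$ property recalled in Remark~\ref{lang}. (In fact smoothness of $X$ is not used in this argument; it is kept only for consistency with the surrounding statements, where it is genuinely needed.)
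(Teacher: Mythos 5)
Your proof is correct and follows essentially the same route as the paper: both arguments linearize the containment condition via the polar/tangent-space conditions of the $Q_i$ along the previously constructed plane, reduce to finding a point on an intersection of $c$ quadrics in a projective space of dimension $N-s(c+1)$, and invoke the multi-form $C_r$ property of Remark \ref{lang}. The only cosmetic difference is that you build the flag one point at a time with the explicit bilinear-form identity, whereas the paper runs the induction by parametrizing $s$-planes through a fixed $(s-1)$-plane by $\mathbb{P}^{N-s}$ and cutting by $sc$ tangent-space equations; the dimension bookkeeping is identical.
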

\begin{proof}
For $s = 0$ the claim follows from Remark \ref{lang}. We proceed by induction on $s$. Since
$$
N-(s-1)(c+1)+1 > N-s(c+1)+1 > 2^rc
$$
through any point of $X$ there is an $(s-1)$-plane. Fix one of these $(s-1)$-planes and denote it by $\Lambda^{s-1}\subset X$. The $s$-planes in $\mathbb{P}^{N}$ containing $\Lambda^{s-1}$ are parametrized by $\mathbb{P}^{N-s}$. 

Arguing as in the proof of Lemma \ref{Q_LS} we see that requiring such an $s$-plane to be contained in $Q_i$ yields $s$ linear equations given by the tangent spaces of $Q_i$ at $s$ general points of $\Lambda^{s-1}$ plus a quadratic equation induced by $Q_i$ itself. 

Hence, the points of $\mathbb{P}^{N-s}$ corresponding to $s$-planes contained in $X$ are parametrized by a subvariety cut out by $sc$ linear equations and $c$ quadratic equations that is an intersection $Y$ of $c$ quadrics in $\mathbb{P}^{N-s-sc}$. Since $\dim(Y) = N-s(c+1)-c > 2^rc-1-c \geq -1$ and $N-s(c+1)+1 > 2^rc$ Remark \ref{lang} yields that $Y$ has a point and so there is an $s$-plane through $\Lambda^{s-1}$ contained in $X$.     
\end{proof}

\begin{Corollary}\label{rat_ciq}
Let $X = Q_1\cap\dots \cap Q_c\subset\mathbb{P}^{N}$ be a smooth complete intersection of quadrics over a $C_r$ field. If
$$
N-c^2+2 > 2^rc
$$
then $X$ is rational.
\end{Corollary}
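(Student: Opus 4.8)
The plan is to combine Proposition \ref{ciqls} with Lemma \ref{ciq}. The point of Corollary \ref{rat_ciq} is that once $X$ contains a linear subspace of dimension $c-1$, rationality is automatic by Lemma \ref{ciq}; so it suffices to verify that the numerical hypothesis $N - c^2 + 2 > 2^r c$ forces the existence of such a $(c-1)$-plane through (indeed, any) point of $X$, which is exactly what Proposition \ref{ciqls} provides with $s = c-1$.

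First I would apply Proposition \ref{ciqls} with $s = c - 1$. Its hypothesis reads
\[
N - (c-1)(c+1) + 1 > 2^r c,
\]
that is, $N - (c^2 - 1) + 1 > 2^r c$, i.e. $N - c^2 + 2 > 2^r c$, which is precisely the assumption of the Corollary. Hence Proposition \ref{ciqls} applies and yields, through any point of $X$, a $(c-1)$-plane $\Lambda \subset X$. In particular $X$ contains some $(c-1)$-plane defined over the base field.

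Next I would invoke Lemma \ref{ciq}: a smooth complete intersection of $c$ quadrics in $\mathbb{P}^N$ containing a $(c-1)$-plane is rational. Since $X$ is smooth by hypothesis and we have just produced the required $(c-1)$-plane $\Lambda$, Lemma \ref{ciq} gives that $X$ is rational, completing the proof.

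There is essentially no obstacle here: the argument is a one-line chaining of two previously established results, and the only thing to get right is the arithmetic identity $(c-1)(c+1) = c^2 - 1$ that matches the two numerical hypotheses. The only minor caveat worth a remark is the degenerate range: one should note that the inequality $N - c^2 + 2 > 2^r c$ is only meaningful (and the conclusion only nonvacuous) when $N$ is large enough that a $(c-1)$-plane can fit inside an $(N-c)$-dimensional variety, but this is automatically guaranteed since $N - s(c+1) - c > -1$ is checked inside the proof of Proposition \ref{ciqls}, so no separate bookkeeping is needed.
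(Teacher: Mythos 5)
Your proof is correct and is essentially identical to the paper's: both apply Proposition \ref{ciqls} with $s=c-1$, using the identity $(c-1)(c+1)=c^2-1$ to match the numerical hypotheses, and then conclude by Lemma \ref{ciq}. No issues.
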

\begin{proof}
By Proposition \ref{ciqls} with $s = c-1$ the complete intersection $X$ contains a $(c-1)$-plane and hence the claim follows from Lemma \ref{ciq}.
\end{proof}

We are now ready to prove our first result on unirationality of quintic hypersurfaces. 

\begin{Proposition}\label{quint3}
Let $X^{n}_5 = \{\overline{f} = 0\}\subset\mathbb{P}^{n+1}$ be a quintic hypersurface of the form
$$
\overline{f} = \sum_{ i_0+\dots +i_{n-h} = 3}x_0^{i_0}\dots x_{n-h}^{i_{n-h}}A_{i_0,\dots,i_{n-h}}(y_0,\dots,y_h,x_{n-h}) 
$$
with $A_{i_0,\dots,i_{n-h}}$ general quadratic polynomials. If either 
\begin{itemize}
\item[(i)] $n\geq 4$, $2h\geq n+3$ and the complete intersection
$$
W' = \{A_{3,\dots,3} = A_{2,1,0,\dots,0} = x_1 = \dots = x_{n-h} = 0\}\subset\mathbb{P}^{h}_{(y_0,\dots,y_h)}
$$
contains a line; or
\item[(ii)] $h\geq 5$, $n\geq 6$ and the quadric 
$$
\widetilde{Q} = \{x_0 = \dots = x_{n-h} = A_{3,0,\dots,0} = 0\}\subset\mathbb{P}^{h}
$$
contains a $2$-plane;
\end{itemize}
then $X_5^{n}$ is unirational. 
\end{Proposition}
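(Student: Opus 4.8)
The plan is to put $X_5^n$ in quadric‑bundle form and then apply Proposition \ref{Enr}. Since $\overline{f}$ is a cubic in $x_0,\dots,x_{n-h}$ with coefficients of degree $2$ in $y_0,\dots,y_h$ (and $x_{n-h}$), the hypersurface $X_5^n$ has multiplicity $3$ along the $h$‑plane $\Lambda=\{x_0=\dots=x_{n-h}=0\}$ and, the $A_{i_0,\dots,i_{n-h}}$ being general, it is not a cone with vertex $\Lambda$; this is exactly the situation produced by Proposition \ref{qb-hyp} with $\delta_{2,0,\dots,0}=3$, so $X_5^n$ also carries multiplicity $2$ along an $(n-h-1)$‑plane $\Lambda'$ disjoint from $\Lambda$. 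Blowing up $\Lambda$ as in Lemma \ref{hyp} with $d=5$, $m=3$, we realize $X_5^n$ as (birational to) a quadric bundle $\pi\colon\widetilde{X}_5\to\mathbb{P}^{n-h}$ whose general fibre is a smooth $h$‑dimensional quadric; by Remark \ref{QC} such a fibre is unirational over the residue field exactly when it has a rational point, so the generic‑fibre hypothesis of Proposition \ref{Enr} holds automatically. Moreover Lemma \ref{hyp} identifies the exceptional divisor $\widetilde{E}\subset\widetilde{X}_5$ with a divisor of bidegree $(3,2)$ in $\mathbb{P}^{n-h}_{(x)}\times\mathbb{P}^{h}_{(y)}$, namely $\widetilde{E}=\{\sum_{|i|=3}x^{i}B_{i}(y)=0\}$ with $B_{i}$ the degree‑$2$‑in‑$y$ part of $A_{i}$, and for general $X_5^n$ the map $\pi|_{\widetilde{E}}\colon\widetilde{E}\to\mathbb{P}^{n-h}$ is dominant. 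Thus by Proposition \ref{Enr} it suffices to produce, inside $\widetilde{E}$, a unirational subvariety dominating $\mathbb{P}^{n-h}$.

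\textbf{The two cases.} The fibre of the quadric bundle $\widetilde{E}\to\mathbb{P}^{n-h}$ over $e_{0}=[1:0:\dots:0]$ is exactly the quadric $\widetilde{Q}=\{B_{3,0,\dots,0}(y)=0\}\subset\mathbb{P}^{h}$ of case (ii), while $W'=\{B_{3,0,\dots,0}=B_{2,1,0,\dots,0}=0\}$ is the intersection of $\widetilde{Q}$ with the quadric recording the first‑order deformation of this fibre in the $x_{1}$‑direction. One uses the linear space provided by the hypothesis — a $2$‑plane $\Pi\subset\widetilde{Q}$ in case (ii), a line $L\subset W'$ in case (i) — to build the required multisection by a secant/residual construction in the cubic fibres $\{\sum x^{i}B_{i}(y)=0\}\subset\mathbb{P}^{n-h}$ of the second projection $\widetilde{E}\to\mathbb{P}^{h}$: in case (i), for $y\in L$ the line $\langle e_{0},e_{1}\rangle$ meets this cubic at $e_{0}$ with multiplicity two and in one further point, and in case (ii), for $y\in\Pi$ the cubic passes through $e_{0}$; restricting $\widetilde{E}$ over a suitable linear subspace of $\mathbb{P}^{h}$ containing $L$ (resp.\ $\Pi$) then yields a subvariety $Z\subseteq\widetilde{E}$ that carries a rational section of its cubic‑bundle structure and still dominates $\mathbb{P}^{n-h}$. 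When the resulting cubic fibres have dimension $\geq 2$ they are not cones for general $X_5^n$, so by Remark \ref{QC} (Kollár's theorem) they are unirational over the relevant function field; hence $Z$ is unirational and Proposition \ref{Enr} concludes. The inequalities $2h\geq n+3$, $n\geq 4$ in case (i) and $h\geq 5$, $n\geq 6$ in case (ii) are precisely what guarantee both that the hypothesis linear space exists for general $X_5^n$ and that these fibres are large enough. The low‑dimensional fibrations left over — in particular $n-h\leq 2$ — are treated separately: there one rewrites $X_5^n$ via Lemma \ref{sing0} as a divisor of bidegree $(3,2)$ in $\mathbb{P}^{1}\times\mathbb{P}^{h}$, or projects once more from a linear subspace containing $\Lambda$ and a point of $\Lambda'$ so as to reduce to Proposition \ref{sing1}, and finishes with the argument of Proposition \ref{p0} together with \cite[Proposition 2.3]{CSS87} on intersections of two quadrics.

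\textbf{Main obstacle.} The delicate step is exactly the construction of the unirational multisection $Z\subset\widetilde{E}$ dominating all of $\mathbb{P}^{n-h}$: the line in $W'$ and the $2$‑plane in $\widetilde{Q}$ live only over the single point $e_{0}$, so spreading the section they furnish across the whole base — while simultaneously keeping $Z$ unirational, which forces the cubic fibres to have dimension at least two — is the crux, and it is where the specific shape of the coefficient quadrics $B_{i}$ inherited from $\overline{f}$ and the numerical relations between $h$ and $n$ have to be exploited, with the small‑dimensional cases handled by hand as indicated above.
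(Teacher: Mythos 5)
Your reduction to finding a unirational multisection of the quadric bundle $\pi\colon\widetilde{X}_5\to\mathbb{P}^{n-h}$ via Proposition \ref{Enr} is a reasonable frame, but the multisection is never actually constructed, and the place where you would construct it is exactly where the argument breaks. You look for it inside the exceptional divisor $\widetilde{E}\subset\mathbb{P}^{n-h}\times\mathbb{P}^h$ of bidegree $(3,2)$, using the cubic fibres of the second projection $\widetilde{E}\to\mathbb{P}^h$. The residual-point construction you describe over the line $L\subset W'$ (resp.\ the $2$-plane $\Pi\subset\widetilde{Q}$) gives a section of $\widetilde{E}\to\mathbb{P}^h$ only over $L$ (resp.\ the constant section $e_0$ over $\Pi$), and its image in the base $\mathbb{P}^{n-h}$ is a single line (resp.\ a single point); to spread it you must take $Z=\widetilde{E}|_L$ (resp.\ $\widetilde{E}|_\Pi$) and invoke Koll\'ar's theorem on the cubic fibres of $Z\to L$. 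Those fibres have dimension $n-h-1$, so you need $n-h\geq 3$: a plane cubic with a rational point is generically an elliptic curve, not unirational, and $0$-dimensional fibres give nothing. But the hypotheses of (i) force $n-h\leq h-3$ while still allowing $n-h=1,2$ (e.g.\ $n=5,h=4$ or $n=7,h=5$), and (ii) allows $n=6,h=5$; your fallback to Lemma \ref{sing0} and Proposition \ref{sing1} does not cover these cases (Proposition \ref{sing1} concerns a different configuration and needs extra rational points $p,q$ off $\Lambda$ that are not among the present hypotheses). So the ``main obstacle'' you flag at the end is a genuine, unresolved gap rather than a technicality.

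The paper avoids the exceptional divisor altogether, precisely so that the relevant fibre dimensions are governed by $n$ rather than by $n-h$. In case (i) it takes the point $p=[1:0:\dots:0]$ on the \emph{double} locus $\Lambda'$ and parametrizes the lines through $p$ meeting $X_5^n$ with multiplicity four at $p$ by a complete intersection $Y=Y_2\cap Y_3\subset\mathbb{P}^n$ of a quadric cone and a cubic; the fifth intersection point gives $\psi\colon Y\dashrightarrow X_5^n$, whose image is shown to dominate $\mathbb{P}^{n-h}$ and serves as the multisection for Proposition \ref{Enr}. The unirationality of $Y$ is then obtained from the cone $W$ over $W'$ with vertex $[1:0:\dots:0]$ (rational because $W'$ contains a line, by Lemma \ref{ciq}) together with a quadric bundle of tangent lines over $W$ with $(n-4)$-dimensional fibres, as in Proposition \ref{p_bun}. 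In case (ii) the $2$-plane in $\widetilde{Q}$ gives a $3$-plane $H\subset\Lambda\cap X_5^n$, and the lines with contact of order four at points of $H$ form a fibration of complete intersections of a quadric and a cubic in $\mathbb{P}^{n-1}$ over $H$, handled by Lemma \ref{Enr_gen}; the fifth-point map is then shown to be dominant directly. To salvage your route you would have to replace the cubic fibres of $\widetilde{E}\to\mathbb{P}^h$ by an auxiliary variety whose fibre dimension grows with $n$, which is in effect what these tangent-line constructions accomplish.
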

\begin{proof}
Note that $X^{n}_5$ is a hypersurface of the form (\ref{pol_proj}) in Proposition \ref{qb-hyp}. Hence, $X^{n}_5$ has multiplicity three along $\Lambda = \{x_0 = \dots = x_{n-h} = 0\}$, and multiplicity two along $\Lambda' = \{y_0 = \dots = y_h = x_{n-h} = 0\}$. 

First, consider $\text{(i)}$. Fix a point $p \in \Lambda'$, say $p = [1:0:\dots : 0]$. The lines through $p$ intersecting $X_5^n$ at $p$ with multiplicity at least four are parametrized by the variety
$$Y = \{A_{3,\dots,3} = x_1A_{2,1,0,\dots,0} + \dots + x_{n-h}A_{2,0,\dots,0,1} = 0\}\subset\mathbb{P}^{n}_{(x_1,\dots,x_{n-h},y_0,\dots,y_h)}.$$ 

Assume $Y$ to be unirational. Associating to a general point $y\in Y$, representing a line $l_y$, the fifth intersection point of $l_y$ and $X_5^n$ we get a rational map
$$
\psi:Y\dasharrow X_5^n.
$$
Set $Y_{\psi} = \overline{\psi(Y)}\subset X_5^n$, and let $\pi:X_5^n\dasharrow \mathbb{P}^{n-h}$ be the restriction to $X_5^n$ of the projection from $\Lambda$. We want to prove that $\pi_{|Y_{\psi}}:Y_{\psi}\dasharrow \mathbb{P}^{n-h}$ is dominant. Indeed, if so $Y_{\psi}$ would be a unirational variety transverse to the quadric fibration induced by the projection from $\Lambda$ and dominating $\mathbb{P}^{n-h}$, and Proposition \ref{Enr} would imply that $X_5^n$ is unirational. 

Since $Y_{\psi}$ is unirational it is enough to prove that the induced map $\overline{\pi}_{|Y_{\psi}}\overline{Y}_{\psi}\dasharrow \mathbb{P}^{n-h}$ between the algebraic closure is dominant. Consider the complete intersection 
$$
Z = Y\cap X^n_5 = \{A_{3,\dots,3} = x_1A_{2,1,0,\dots,0} + \dots + x_{n-h}A_{2,0,\dots,0,1} = \overline{f}= 0\}\subset \mathbb{P}^{n}_{(x_1,\dots,x_{n-h},y_0,\dots,y_h)}.
$$
A general point of $z\in Z\subset X^n_5$ represents a line $l_z$ intersecting $X^n_5$ with multiplicity four at $p$ and multiplicity one at $z$. Hence $Z\subset Y_{\psi}$.

Fix a general point $q\in\mathbb{P}^{n-h}$ and consider 
$$Z_q = \overline{\pi}_{|Y_{\psi}}^{-1}(q)\cap Z.$$
Note that since $h > 2^{r+2}>3$ we have that $Z_q$ is non empty. So $\overline{\pi}_{|W_{\psi}}$ is dominant when restricted to $\overline{Z}$, and hence it is dominant.

Now, we prove that $Y$ is unirational. In $\mathbb{P}^{n}_{(x_1,\dots,x_{n-h},y_0,\dots,y_h)}$ fix the point $p' = [1:0:\dots:0]$. The quadric 
$$\{x_1 = \dots = x_{n-h} = A_{2,1,0,\dots,0}\}\subset\mathbb{P}^{h}_{(y_0,\dots,y_h)}$$ parametrizes lines that are contained in 
$$Y_3 = \{x_1A_{2,1,0,\dots,0} + \dots + x_{n-h}A_{2,0,\dots,0,1}\}.$$
Indeed, the tangent cone of $Y_3$ at $p'$ is defined by $\{A_{2,1,0,\dots,0} = 0\}$ and $\{x_1 = \dots = x_{n-h}= 0\}\subset Y_3$. So these lines intersect $Y_3$ with multiplicity three at $p'$ and at another point in the linear space $\{x_1 = \dots = x_{n-h}= 0\}$. 

Set $Y_2 = \{A_{3,\dots,3} = 0\}$, and let $W$ be the cone over $W'$ with vertex $p'$. Since $p'$ is in the vertex of $Y_2$ we have that
$$W\subset Y = Y_2\cap Y_3.$$
Furthermore, since $W'$ contains a line by Lemma \ref{ciq} it is rational, and hence $W$ is rational as well. 

As in the proof of Proposition \ref{p_bun} we construct a quadric bundle $\widetilde{\mathcal{Q}}\rightarrow W$ with $(n-4)$-dimensional fibers whose general point $(w,l_w)$ represents a point $w\in W$ and a line $l_w$ which is contained in $Y_2$ and intersects $Y_3$ with multiplicity two at $w$. 

Associating to $(w,l_w)$ the third point of intersection of $l_w$ and $Y_3$ we get a rational map $W\dasharrow Y$ and arguing as in the proof of Proposition \ref{p_bun} we see that such rational map is dominant. 

Now, we consider $\text{(ii)}$. Note that 
$$
\widetilde{Q}' = \{x_0 = \dots = x_{n-h} = A_{3,0,\dots,0} = 0\}\subset X^n_5\subset\mathbb{P}^{n+1}
$$ 
is an $h$-dimensional quadric cone over $\widetilde{Q}$ with vertex $[1:0:\dots:0]$. Since $\widetilde{Q}$ contains a $2$-plane $\widetilde{Q}'$ contains a $3$-plane $H\subset X^n_5$. If $x\in H$ is a general point the lines through $x$ intersecting $X^n_5$ with multiplicity four at $x$ are parametrized by a complete intersection of a quadric and a cubic in $\mathbb{P}^{n-1}$. Hence, we get a fibration $\mathcal{Y}_{2,3}\rightarrow H$ whose fiber over a general $x\in H$ is a complete intersection $Y_{2,3,x}$ of a quadric $Y_{2,x}\subset\mathbb{P}^{n-1}$ and a cubic $Y_{3,x}\subset\mathbb{P}^{n-1}$. 

The generic fiber of $\mathcal{Y}_{2,3}\rightarrow H$ is then a complete intersection $\mathcal{Y}_{2,3,k(H)} = \mathcal{Y}_{2,k(H)}\cap \mathcal{Y}_{3,k(H)}$ satisfying the hypotheses of Lemma \ref{Enr_gen}. Indeed, by considering the $2$-plane parametrizing lines in $H$ through a general $x\in H$ we get a $2$-plane over $k(H)$ contained in $\mathcal{Y}_{2,k(H)}$.      

Therefore, by Lemma \ref{Enr_gen} $\mathcal{Y}_{2,3,k(H)}$ is unirational over $k(H)$ and so $\mathcal{Y}_{2,3}$ is unirational. Now, a general point of $\mathcal{Y}_{2,3}$ represents a pair $(x,l_x)$ with $x\in H$ general and $l_x$ a line intersecting $X_5^n$ with multiplicity four at $x$. As usual associating to $(x,l_x)\in \mathcal{Y}_{2,3}$ the fifth point of intersection of $l_x$ with $X_5^n$ we get a rational map
$$
\phi:\mathcal{Y}_{2,3}\dasharrow X_5^n.
$$  
Note that $\dim(\mathcal{Y}_{2,3}) = 3+(n-3) = n$. Arguing as in the last part of the proof of Proposition \ref{p_bun} we see that $\phi$ is generically finite and therefore it is dominant. 
\end{proof}

\begin{thm}\label{333}
Let $D\subset \mathbb{P}^{n-h}\times\mathbb{P}^{h+1}$, with $n-h\geq 2$, be a general divisor of bidegree $(3,2)$ over a $C_r$ field. If either
\begin{itemize}
\item[(i)] $n\geq 4$, $2h\geq n+3$ and $h > 2^{r+1}+2$; or
\item[(ii)] $n\geq 6$, $h\geq 5$ and $h > 2^{r}+3$; or
\item[(iii)] $n-h-1 > 3^{r+1}-2$;
\end{itemize}
then $D$ is unirational. 
\end{thm}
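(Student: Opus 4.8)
The plan is to treat (i) and (ii) by passing to the birational model of $D$ furnished by Proposition \ref{qb-hyp} and then invoking Proposition \ref{quint3}, and to treat (iii) directly, playing the two fibration structures carried by $D$ against each other via Lemma \ref{LCub} and Proposition \ref{Enr}. The common first step, valid in all three cases, is the observation that $\mathbb{P}^{n-h}\times\mathbb{P}^{h+1}$ is the toric variety $\mathcal{T}_{0,\dots,0}$ and that, in the language of Definition \ref{splitQB}, a divisor of bidegree $(3,2)$ is a divisor of multidegree $(3,\dots,3;2)$, the compatibility relations (\ref{compdeg}) being vacuous since all $a_i=0$. Thus Proposition \ref{qb-hyp}, applied with $\delta_{2,0,\dots,0}=3$, shows that $D$ is birational to a quintic $X^n_5\subset\mathbb{P}^{n+1}$ which, by (\ref{pol_proj}), is exactly of the shape treated in Proposition \ref{quint3}, with general quadratic coefficients $A_{i_0,\dots,i_{n-h}}$, having multiplicity three along an $h$-plane $\Lambda=\{x_0=\dots=x_{n-h}=0\}$ and multiplicity two along an $(n-h-1)$-plane $\Lambda'$ with $\Lambda\cap\Lambda'=\emptyset$.

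For case (i): since $D$ is general, $W'=\{A_{3,\dots,3}=A_{2,1,0,\dots,0}=x_1=\dots=x_{n-h}=0\}\subset\mathbb{P}^{h}$ is a smooth complete intersection of two quadrics, and the hypothesis $h>2^{r+1}+2$ reads precisely $h-1\cdot(2+1)+1>2^{r}\cdot 2$, so Proposition \ref{ciqls} with $c=2$, $s=1$ produces a line in $W'$; then Proposition \ref{quint3}(i) applies and $X^n_5$, hence $D$, is unirational. For case (ii): likewise $\widetilde Q=\{x_0=\dots=x_{n-h}=A_{3,0,\dots,0}=0\}\subset\mathbb{P}^{h}$ is a smooth quadric, the hypothesis $h>2^{r}+3$ reads precisely $h-2\cdot 2+1>2^{r}$, so Lemma \ref{Q_LS} with $N=h$, $s=2$ produces a $2$-plane in $\widetilde Q$, and Proposition \ref{quint3}(ii) gives the conclusion. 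In both cases the only point requiring attention is that the genericity of $D$ survives the birational transformation of Proposition \ref{qb-hyp} and feeds into every genericity assumption used inside the proof of Proposition \ref{quint3} (smoothness of the auxiliary complete intersections of two quadrics, dominance of the various projections, and so forth); this is routine but should be stated.

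For case (iii) I would work with $D$ itself. The first projection $p_1\colon D\to\mathbb{P}^{n-h}$ is a fibration whose generic fiber is a smooth $h$-dimensional quadric in $\mathbb{P}^{h+1}$, smooth because $D$ is general. Fix a line $L\cong\mathbb{P}^1\subset\mathbb{P}^{h+1}$ and put $D_L:=D\cap(\mathbb{P}^{n-h}\times L)$, which, $D$ being general, is a general divisor of bidegree $(3,2)$ in $\mathbb{P}^{n-h}\times\mathbb{P}^1$; its second projection $D_L\to\mathbb{P}^1$ is a fibration in $(n-h-1)$-dimensional cubic hypersurfaces, whose general fiber has no triple point. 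Since $\dim\mathbb{P}^1=1$ and, by hypothesis, $n-h-1>3^{r+1}-2$, Lemma \ref{LCub} shows that $D_L$ is unirational. On the other hand the restriction of $p_1$ to $D_L$ is generically $2:1$ onto $\mathbb{P}^{n-h}$ — the equation cutting out $D_L$ has degree two in the coordinates of $L$ — hence dominant. Applying Proposition \ref{Enr} with $f=p_1$, $Y=\mathbb{P}^{n-h}$ and $Z=D_L$: the generic fiber of $\widetilde f\colon D\times_{\mathbb{P}^{n-h}}D_L\to D_L$ is a smooth quadric over $k(D_L)$, which is unirational over $k(D_L)$ if and only if it carries a $k(D_L)$-rational point (a smooth quadric with a rational point being rational), so $D$ is unirational.

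The genuine difficulty is case (iii): one must notice that restricting $D$ to $\mathbb{P}^{n-h}\times L$ yields a variety that is at the same time unirational — because it is a cubic fibration over the curve $\mathbb{P}^1$, where the $C_r$-estimate of Lemma \ref{LCub} is cheap — and a multisection, rather than a divisorial subvariety over the base, of the quadric fibration $p_1$. This second feature holds exactly because $\dim D_L=1+(n-h-1)=n-h=\dim\mathbb{P}^{n-h}$, and it is this numerical coincidence that makes Proposition \ref{Enr} applicable; everything else is bookkeeping. Cases (i) and (ii), once Proposition \ref{quint3} is in hand, present no obstacle beyond tracking genericity through Proposition \ref{qb-hyp}.
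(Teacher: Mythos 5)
Your proposal is correct and follows essentially the same route as the paper: cases (i) and (ii) via Proposition \ref{qb-hyp} and Proposition \ref{quint3} (with the numerical hypotheses feeding into Proposition \ref{ciqls} and Lemma \ref{Q_LS} exactly as you check), and case (iii) by restricting the cubic fibration $\pi_2$ to $\mathbb{P}^{n-h}\times L$ for a general line $L\subset\mathbb{P}^{h+1}$, applying Lemma \ref{LCub} over $\mathbb{P}^1$, and then using the resulting unirational multisection of the quadric fibration $p_1$ in Proposition \ref{Enr}.
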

\begin{proof}
By Proposition \ref{qb-hyp} $D$ is birational to a quintic hypersurface $X^n_{5}\subset\mathbb{P}^{n+1}$ of the form considered in Proposition \ref{quint3}. It is straightforward to see that if $D \subset \mathbb{P}^{n-h}\times\mathbb{P}^{h+1}$ is a general divisor of bidegree $(3,2)$ then the $A_{i_0,\dots,i_{n-h}}$ are general quadratic polynomials in the variables $y_0,\dots,y_{h},x_{n-h}$.

Consider $\text{(i)}$. Since, $h > 2^{r+1}+2$ by Proposition \ref{ciqls} the complete intersection $W'$ in Proposition \ref{quint3} contains a line, and hence Proposition \ref{quint3} yields that $X^n_{5}$ is unirational. 

For $\text{(ii)}$ note that since $h > 2^{r}+3$ Lemma \ref{Q_LS} implies that the quadric $\widetilde{Q}$ in Proposition \ref{quint3} contains a $2$-plane. 

Now, consider $\text{(iii)}$. The projection $\pi_2:D\rightarrow\mathbb{P}^{h+1}$ endows $D$ with a structure of fibration in $(n-h-1)$-dimensional cubic hypersurfaces. Take a general line $L\subset \mathbb{P}^{h+1}$ and set $\mathcal{C}_L = \pi_2^{-1}(L)$. Then $\mathcal{C}_L$ is a fibration in $(n-h-1)$-dimensional cubic hypersurfaces over $\mathbb{P}^1$ and since $n-h-1 > 3^{r+1}-2$ Lemma \ref{LCub} yields that $\mathcal{C}_L$ is unirational. Finally, to conclude it is enough to note that $\pi_{|\mathcal{C}_L}:\mathcal{C}_L\rightarrow\mathbb{P}^{n-h}$ is dominant and to apply Proposition \ref{Enr}.
\end{proof}

\begin{Remark}
Take for instance $r = 0$ that is $k$ is algebraically closed. Remark \ref{lang} gives the rationality of a $D\subset \mathbb{P}^{n-h}\times\mathbb{P}^{h+1}$ as in Theorem \ref{333} for $h > 2^{n-h}-2$ while Theorem \ref{333} gives the unirationality of $D$ for $h > 4$ as long as $n\geq 4$ and $h \geq n-h+3$. For example take $h = 10$. Then Remark \ref{lang} yields that $D$ is rational for $n\leq 13$ while Theorem \ref{333} gives the unirationality of $D$ for $n\leq 17$. 

Furthermore, there are cases covered by $\text{(iii)}$ but not by $\text{(i)}$. For instance, by $\text{(iii)}$ we get that a general $D\subset \mathbb{P}^{n-1}\times\mathbb{P}^{2}$ of bidegree $(3,2)$ and dimension at least four is unirational.  

For $r\geq 1$ $\text{(i)}$ generally performs better that $\text{(ii)}$. For instance, the case $h = 7, n = 11$ is covered by $\text{(i)}$ but not by $\text{(ii)}$. 
\end{Remark}

\begin{Remark}\label{sr333}
In particular, Theorem \ref{333} (ii) together with \cite[Theorem A]{ABP18} yields that a very general divisor of bidegree $(3,2)$ in $\mathbb{P}^3\times\mathbb{P}^2$, over an algebraically closed field of characteristic zero, is unirational but not stably rational. 
\end{Remark}

We end this section with our main results on the unirationality of quintic hypersurfaces which are singular along a linear subspace. 

\begin{thm}\label{th5}
Let $X_5\subset \mathbb{P}^{n+1}$ be a quintic hypersurface over a $C_r$ field having multiplicity three along an $h$-plane and otherwise general. Assume that $n-h \geq 2$. If either
\begin{itemize}
\item[(i)] $n\geq 5$, $2h\geq n+4$ and $h > 2^{r+1}+3$; or
\item[(ii)] $n\geq 7$, $h\geq 6$ and $h > 2^{r}+4$; or
\item[(iii)] $n-h-1 > 3^{r+1}-2$;
\end{itemize}
then $X_5$ is unirational. 

Similarly, if $X_5\subset \mathbb{P}^{n+1}$ has multiplicity two along an $h$-plane with $h\geq 2$ and is otherwise general, and either
\begin{itemize}
\item[(i)] $n\geq 5$, $2h\leq n-4$ and $n-h > 2^{r+1}+3$; or
\item[(ii)] $n\geq 7$, $n-h-1\geq 5$ and $n-h-1 > 2^{r}+3$; or
\item[(iii)] $h > 3^{r+1}-1$;
\end{itemize}
then $X_5$ is unirational. 
\end{thm}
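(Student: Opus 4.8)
\emph{Plan.} The idea is to reduce both statements to Theorem \ref{333} by blowing up along the linear subspace of large multiplicity, and then to lift unirationality from the exceptional divisor to $X_5$ by Enriques' criterion (Proposition \ref{Enr}). Throughout, Lemma \ref{hyp} supplies the model of the blow-up and the bidegree of the exceptional divisor, while Remark \ref{QC} provides the ``unirational if and only if it has a point'' property needed for the generic fibre.

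\emph{Multiplicity three along an $h$-plane.} Apply Lemma \ref{hyp} with $d=5$, $m=3$: the blow-up $\widetilde{X}_5$ of $X_5$ along $\Lambda$ lies in $\mathcal{T}_{1,0,\dots,0}$, the projection $\pi\colon\widetilde{X}_5\to\mathbb{P}^{n-h}$ is a fibration whose generic fibre is an $h$-dimensional quadric hypersurface in $\mathbb{P}^{h+1}$ (the last entry of the multidegree being $d-m=2$), and the exceptional divisor $\widetilde{E}\subset\widetilde{X}_5$ is a divisor of bidegree $(3,2)$ in $\mathbb{P}^{n-h}\times\mathbb{P}^{h}$. From the expression (\ref{st}) one reads that $\widetilde{E}$ is cut out by the restrictions to $\Lambda$ of the quadratic forms defining $X_5$, so for $X_5$ general $\widetilde{E}$ is a general divisor of bidegree $(3,2)$. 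Setting $n'=n-1$, $h'=h-1$ identifies $\mathbb{P}^{n-h}\times\mathbb{P}^{h}$ with $\mathbb{P}^{n'-h'}\times\mathbb{P}^{h'+1}$, where $n'-h'=n-h\geq 2$, and an elementary computation shows that hypotheses (i), (ii), (iii) of the first part of the theorem become precisely conditions (i), (ii), (iii) of Theorem \ref{333} for $(n',h')$; hence $\widetilde{E}$ is unirational. Since $X_5$ is general the generic fibre of $\pi$ is a smooth quadric, and the restriction of $\pi$ to $\widetilde{E}$, which is the first projection of $\mathbb{P}^{n-h}\times\mathbb{P}^{h}$, is dominant. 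Applying Proposition \ref{Enr} with $Y=\mathbb{P}^{n-h}$, $X=\widetilde{X}_5$, $Z=\widetilde{E}$: the generic fibre of $\widetilde{X}_5\times_{\mathbb{P}^{n-h}}\widetilde{E}\to\widetilde{E}$ is a quadric over $k(\widetilde{E})$, rational as soon as it has a point by Remark \ref{QC}, and the dominant morphism $\widetilde{E}\to\mathbb{P}^{n-h}$ yields a rational section, hence such a point. Therefore $\widetilde{X}_5$, and so $X_5$, is unirational.

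\emph{Multiplicity two along an $h$-plane.} This case is completely parallel. Lemma \ref{hyp} with $d=5$, $m=2$ exhibits $\widetilde{X}_5\to\mathbb{P}^{n-h}$ as a fibration in $h$-dimensional cubic hypersurfaces (the last multidegree entry is now $d-m=3$), with exceptional divisor $\widetilde{E}$ of bidegree $(2,3)$ in $\mathbb{P}^{n-h}\times\mathbb{P}^{h}$, i.e. a general divisor of bidegree $(3,2)$ in $\mathbb{P}^{h}\times\mathbb{P}^{n-h}$. Taking $n'=n-1$, $h'=n-h-1$ identifies $\mathbb{P}^{h}\times\mathbb{P}^{n-h}$ with $\mathbb{P}^{n'-h'}\times\mathbb{P}^{h'+1}$, where $n'-h'=h\geq 2$, and again one checks that hypotheses (i), (ii), (iii) of the second part transform into conditions (i), (ii), (iii) of Theorem \ref{333}, so $\widetilde{E}$ is unirational. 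Since $h\geq 2$ and $X_5$ is general, the generic fibre of the fibration is a smooth cubic of dimension $h\geq 2$, in particular not a cone, hence by Remark \ref{QC} unirational exactly when it has a point; as $\widetilde{E}$ dominates $\mathbb{P}^{n-h}$ and provides such a point through a rational section, Proposition \ref{Enr} gives the unirationality of $\widetilde{X}_5$, hence of $X_5$.

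\emph{Main obstacle.} The heavy lifting is already contained in Theorem \ref{333}; what remains delicate is the bookkeeping of the reduction step: verifying that a general quintic with the prescribed multiplicity along $\Lambda$ yields a \emph{general} bidegree-$(3,2)$ exceptional divisor (this is what makes (\ref{st}) the relevant point to invoke), and checking that the various numerical inequalities correspond correctly under the relabelling $(n,h)\rightsquigarrow(n-1,h-1)$, respectively $(n,h)\rightsquigarrow(n-1,n-h-1)$. Granting this, the statement follows formally from Lemma \ref{hyp}, Theorem \ref{333} and Proposition \ref{Enr}.
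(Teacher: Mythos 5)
Your proposal is correct and follows essentially the same route as the paper: blow up along $\Lambda$ via Lemma \ref{hyp}, identify the exceptional divisor $\widetilde{E}$ as a general divisor of bidegree $(3,2)$ (resp.\ $(2,3)$) in $\mathbb{P}^{n-h}\times\mathbb{P}^h$, apply Theorem \ref{333} to get unirationality of $\widetilde{E}$, and conclude with Proposition \ref{Enr} and Remark \ref{QC}. Your explicit verification that the index shift $(n,h)\rightsquigarrow(n-1,h-1)$, respectively $(n,h)\rightsquigarrow(n-1,n-h-1)$, turns the hypotheses of the theorem into exactly the hypotheses of Theorem \ref{333} is precisely the bookkeeping the paper leaves implicit, and it checks out.
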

\begin{proof}
By Lemma \ref{hyp} the exceptional divisor $\widetilde{E}\subset\widetilde{X}_5$ is a divisor of bidegree $(3,2)$ in $\mathbb{P}^{n-h}\times\mathbb{P}^h$. Since $X_5$ is general $\widetilde{E}$ maps dominantly onto $\mathbb{P}^{n-h}$, and hence to conclude we apply Proposition \ref{Enr} and Theorem \ref{333}. For the second statement it is enough to  argue as in the previous case on a divisor of bidegree $(2,3)$ in $\mathbb{P}^{n-h}\times\mathbb{P}^h$.    
\end{proof}

\begin{Proposition}\label{n-h=1}
Let $X_5\subset \mathbb{P}^{n+1}$ be a quintic hypersurface over a field $k$ having multiplicity three along an $(n-1)$-plane and otherwise general. If either 
\begin{itemize}
\item[(i)] $n\geq 5$ and $k$ is either a number field or a real closed field; or
\item[(ii)] $k$ is $C_r$, $n > 4$ and $n > 2^r$;
\end{itemize}
then $X_5$ is unirational. 
\end{Proposition}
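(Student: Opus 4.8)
The plan is to reduce everything, via Lemma \ref{sing0}, to the unirationality of a divisor of bidegree $(3,2)$ in $\mathbb{P}^1\times\mathbb{P}^n$ and then to run the argument of Proposition \ref{p0} once a suitable $k$-rational point has been produced. Since $X_5$ has multiplicity $5-2=3$ along the codimension two linear subspace $\Lambda$, after a suitable change of coordinates its equation has the shape of Lemma \ref{sing0} with $d=5$, so $X_5$ is birational to a divisor $Y=Y_{(3,2)}\subset\mathbb{P}^1_{(x_0,x_1)}\times\mathbb{P}^n$ of bidegree $(3,2)$; equivalently, blowing up $\Lambda$ realises $X_5$ birationally as a fibration $\pi:Y\to\mathbb{P}^1$ in $(n-1)$-dimensional quadric hypersurfaces. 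As a quadric with a rational point is rational (Remark \ref{QC}), by Enriques' criterion (Proposition \ref{Enr}) it suffices to exhibit a unirational subvariety $Z\subseteq Y$ with $\pi|_Z$ dominant.

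First I would produce a dense set of $k$-points of $X_5$ off $\Lambda$. Writing $\Lambda=\{z_0=z_1=0\}$ and $X_5=\{z_0^3A+z_0^2z_1B+z_0z_1^2C+z_1^3D=0\}$ with $A,B,C,D$ general quadrics, the $n$-plane $\{\beta z_0-\alpha z_1=0\}$ through $\Lambda$ meets $X_5$ along $\Lambda$ with multiplicity $3$ and along a smooth $(n-1)$-dimensional quadric $Q_{[\alpha:\beta]}\subset\mathbb{P}^n$ whose defining quadratic form is obtained from $\alpha^3A+\alpha^2\beta B+\alpha\beta^2C+\beta^3D$. If $k$ is $C_r$ with $n>2^r$, then $n+1>2^r$ and Remark \ref{lang} gives a $k$-point on $Q_{[1:0]}$. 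If instead $k$ is a number field or a real closed field and $n\geq5$, I would use that the leading term $t^3A$ of the pencil $t\mapsto t^3A+t^2B+tC+D$ reverses sign as $t\to\pm\infty$, so at each real place $v$ of $k$ the signature of this form takes an indefinite value for $t$ in a nonempty open subset of $k_v$; by weak approximation on $\mathbb{P}^1$ (vacuous when $k$ is real closed) there is $[\alpha_0:\beta_0]\in\mathbb{P}^1(k)$ for which $Q_{[\alpha_0:\beta_0]}$ is indefinite at all real places, and being a smooth quadric in $n+1\geq6\geq5$ variables it is automatically isotropic at all non-archimedean places, hence has a $k$-point by the Hasse--Minkowski theorem (resp.\ by Sylvester's law of inertia). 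In all cases one obtains a smooth $(n-1)$-dimensional quadric $Q\subseteq X_5$ with a $k$-point meeting $X_5\setminus\Lambda$ densely; $Q$ is therefore rational over $k$, so $X_5$ carries a Zariski-dense set of $k$-points off $\Lambda$.

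It then remains to follow the proof of Proposition \ref{p0}. A general $k$-point of $X_5$ off $\Lambda$ corresponds under $X_5\dashrightarrow Y$ to a $k$-point $p'=([\alpha:\beta],w')\in Y(k)$, and since $X_5$ is otherwise general its image $\overline{u}=\eta(p')\in\mathbb{P}^2$ under the map (\ref{eta}) sweeps out a Zariski-dense subset of $\mathbb{P}^2$ as $p'$ varies; for a suitable choice the fibre $F_{\overline{u}}\subset\mathbb{P}^n$ of Ottem's small modification $\eta^+:Y\dashrightarrow Y^+$ (as in the proof of Proposition \ref{p0}) is then a \emph{smooth} complete intersection of two quadrics containing the $k$-point $w'$. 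As $n>4$, \cite[Proposition 2.3]{CSS87} shows $F_{\overline{u}}$ is unirational, hence so is its strict transform $\widetilde{F}_{\overline{u}}\subset\mathbb{P}^1\times\mathbb{P}^n$, which maps dominantly onto $\mathbb{P}^1$. Taking $Z=\widetilde{F}_{\overline{u}}$ in Enriques' criterion, together with Remark \ref{QC}, gives that $Y$---and therefore $X_5$---is unirational.

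I expect the delicate point to be the construction of the $k$-rational point in case (i): over a number field or a real closed field Remark \ref{lang} is unavailable, and a $k$-point on some member of the pencil $\{Q_{[\alpha:\beta]}\}$ must be extracted from the sign reversal of its leading coefficient together with weak approximation and the Hasse principle for quadratic forms; a secondary point to be checked is that the $k$-points so produced are general enough for $\eta$ to carry them onto a dense subset of $\mathbb{P}^2$, which is exactly where density---not just existence---of $k$-points on $X_5\setminus\Lambda$ is used.
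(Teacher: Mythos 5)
Your reduction collapses at the very first step. Lemma \ref{sing0} is not a normal form for an arbitrary quintic with multiplicity three along $\Lambda=\{z_0=z_1=0\}$: it requires the coefficient of $z_0^{3}$ to be a quadric $Q\in k[z_1,\dots,z_{n+1}]_2$ (and the $z_0^2$-parts of the $A_i$ to be constants), which forces the point $[1:0:\dots:0]$ to be a \emph{double} point of $X_5$ off $\Lambda$ --- this is exactly why Proposition \ref{sing1} carries the hypothesis ``a double point $q\in X_d\setminus\Lambda$''. A general quintic with multiplicity three along $\Lambda$ is smooth away from $\Lambda$ (Bertini on the linear system $\sum_{i+j=3}z_0^iz_1^jB_{ij}$ with base locus $\Lambda$), so no such normal form exists. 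If you only assume a smooth point $p\in X_5\setminus\Lambda$ you land in Lemma \ref{l0}, whose output is a divisor of bidegree $(d-1,2)=(4,2)$, not $(3,2)$; and without any distinguished point the blow-up along $\Lambda$ is the multidegree $(5,4,\dots,3,\dots;2)$ divisor in the nontrivial bundle $\mathcal{T}_{1,0,\dots,0}$ of Lemma \ref{hyp}. In none of these cases does the Ottem matrix (\ref{Ott_Mx}) of Proposition \ref{p0} apply, so the unirational multisection $\widetilde{F}_{\overline{u}}$ you want to feed into Proposition \ref{Enr} is never constructed. Note also that a Zariski-dense set of $k$-points spread over the fibres of a quadric bundle over $\mathbb{P}^1$ does not by itself give unirationality; the whole difficulty is producing a unirational multisection, and that is the step your argument leaves unproved.

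The object to which the $(3,2)$ analysis actually applies is the \emph{exceptional divisor} $\widetilde{E}$ of the blow-up along $\Lambda$, which by Lemma \ref{hyp} (with $m=3$, $d=5$, $h=n-1$) is an honest divisor of bidegree $(3,2)$ in $\mathbb{P}^1\times\mathbb{P}^{n-1}$; the paper shows $\widetilde{E}$ has a $k$-point under hypotheses (i) or (ii) (via \cite[Corollary 4.13, Lemma 4.18]{Mas22} and Remark \ref{lang}), deduces that $\widetilde{E}$ is unirational from \cite[Theorem 1.8]{Mas22}, and uses $\widetilde{E}$ as the multisection in Proposition \ref{Enr}. Your arithmetic input --- the odd vertical degree forcing points over real closed fields, and Hasse--Minkowski plus weak approximation over number fields (where the ``sign reversal of the leading term'' should really be justified by the odd-degree argument, since the limiting form degenerates) --- is exactly the kind of argument needed for the existence of a point on $\widetilde{E}$, but you apply it to $X_5\setminus\Lambda$ rather than to $\widetilde{E}$, and existence of points is only the first half of what the proof requires.
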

\begin{proof}
The exceptional divisor $\widetilde{E}\subset\widetilde{X}_5$ is a general divisor of bidegree $(3,2)$ in $\mathbb{P}^{1}\times\mathbb{P}^{n-1}$. By \cite[Corollary 4.13, Lemma 4.18]{Mas22} and Remark \ref{lang} under our hypotheses $\widetilde{E}$ has a point and hence \cite[Theorem 1.8]{Mas22} yields that $\widetilde{E}$ is unirational. To conclude it is enough to argue as in the proof of Theorem \ref{th5} applying Propositions \ref{Enr}.  
\end{proof}

\begin{Remark}\label{kol_rcf}
When $k$ is a real closed field the unirationality of a quintic hypersurface $X_5\subset \mathbb{P}^{n+1}$ as in Proposition \ref{n-h=1} follows from \cite[Corollary 1.8]{Kol99}.
\end{Remark}

\begin{Proposition}\label{d_gen}
Let $X_d\subset \mathbb{P}^{n+1}$ be a hypersurface of degree $d$ over a field $k$ having multiplicity $d-2$ along an $h$-plane and otherwise general. Assume that $(h+1)(d-2)$ is odd. If either 
\begin{itemize}
\item[(i)] $d\leq\frac{5h+3}{h+1}$; or
\item[(ii)] $d\leq \frac{6h+1}{h+1}$, $h\leq 4$, $k$ is $C_r$ and $h+1 > 2^{r+n-h-1}$;
\end{itemize}
then $X_5$ is unirational. 
\end{Proposition}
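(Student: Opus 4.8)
The plan is to follow the scheme of Theorems \ref{Quart_th} and \ref{th5}: pass to the blow-up along the $h$-plane, exploit that the projection from it is a quadric fibration, and reduce everything to the unirationality of the exceptional divisor. Write $m=d-2$ and let $\Lambda\subset\mathbb{P}^{n+1}$ be the $h$-plane along which $X_d$ has multiplicity $m$. By Lemma \ref{hyp} the blow-up $\widetilde{X}_d\to X_d$ along $\Lambda$ is isomorphic to a divisor in $\mathcal{T}_{1,0,\dots,0}$, the linear projection from $\Lambda$ lifts to a morphism $\widetilde{\pi}\colon\widetilde{X}_d\to\mathbb{P}^{n-h}$ whose general fibre is a quadric hypersurface in $\mathbb{P}^{h+1}$ (as $d-m=2$), and the exceptional divisor is a divisor $\widetilde{E}\subset\mathbb{P}^{n-h}\times\mathbb{P}^{h}$ of bidegree $(m,d-m)=(d-2,2)$. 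Since $X_d$ is general, $\widetilde{E}$ maps dominantly onto $\mathbb{P}^{n-h}$ through the first projection $\pi_1$. Applying Proposition \ref{Enr} to $\widetilde{\pi}$ with $Z=\widetilde{E}$, and using Remark \ref{QC} for the generic fibre (a quadric, hence unirational over the base function field precisely when it carries a rational point), it suffices to prove that $\widetilde{E}$ is unirational.

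The hypothesis that $(h+1)(d-2)$ is odd forces $d-2$ to be odd. Hence in case (i), where $d\le\frac{5h+3}{h+1}<5$, one must have $d=3$, so $\widetilde{E}$ is a divisor of bidegree $(1,2)$ in $\mathbb{P}^{n-h}\times\mathbb{P}^{h}$ and Lemma \ref{1a} gives its unirationality at once. In case (ii) the bound $d\le\frac{6h+1}{h+1}$, together with $h\le 4$ and the parity, leaves --- apart from the same bidegree $(1,2)$ situation, again settled by Lemma \ref{1a} --- only $d=5$ and $h=4$, in which case $\widetilde{E}$ is a general divisor of bidegree $(3,2)$ in $\mathbb{P}^{n-h}\times\mathbb{P}^{4}$. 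This is where the substance of the argument lies.

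For the bidegree $(3,2)$ case I would study $\widetilde{E}$ through the quadric bundle $\pi_1\colon\widetilde{E}\to\mathbb{P}^{n-h}$, of relative dimension $h-1$, defined fibrewise by a symmetric $(h+1)\times(h+1)$ matrix whose entries are forms of degree $d-2$ on $\mathbb{P}^{n-h}$; its discriminant locus then has degree $(h+1)(d-2)$, which the hypothesis makes odd. Projecting $\mathbb{P}^{n-h}\dasharrow\mathbb{P}^{n-h-1}$ from a general point and passing to the generic fibre --- a line $\ell$ over the field $K=k(\mathbb{P}^{n-h-1})$, which is $C_{r+n-h-1}$ by Remark \ref{lang} --- turns $\pi_1$ into a quadric bundle over $\mathbb{P}^1_K$ of relative dimension $h-1$ and \emph{odd} discriminant degree. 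The point of the odd discriminant is that it forces a corank-one degeneration of odd multiplicity, and the resolution at such a point produces a rational section of this restricted bundle as soon as $h+1>2^{r+n-h-1}$ --- one power of $2$ better than the bound $h+1>2^{r+n-h}$ that the naive Tsen--Lang estimate yields for a quadric bundle without special discriminant. This section gives a rational, hence unirational, multisection $Z\subset\widetilde{E}$ of $\pi_1\colon\widetilde{E}\to\mathbb{P}^{n-h}$ dominating $\mathbb{P}^{n-h}$; a further application of Proposition \ref{Enr} (with quadric fibres, Remark \ref{QC}) then shows that $\widetilde{E}$ is unirational, whence so are $\widetilde{X}_d$ and $X_d$. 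Equivalently, this last step may be carried out by invoking the quadric-bundle unirationality results of \cite{Mas22}, which are stated in exactly this generality and produce precisely the bound in (ii).

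The main obstacle is this odd-discriminant gain: one has to verify that a corank-one degeneration of odd multiplicity really does yield a rational section of the restricted quadric bundle over the $C_r$ function field, which is what makes the hypothesis $h+1>2^{r+n-h-1}$ --- rather than the weaker $h+1>2^{r+n-h}$ --- enough. Everything else is routine: the birational model of Lemma \ref{hyp}, the two reductions through Proposition \ref{Enr} and Remark \ref{QC}, and the elementary bookkeeping showing that the numerical conditions in (i) and (ii) collapse to the bidegrees $(1,2)$ and $(3,2)$ for $\widetilde{E}$.
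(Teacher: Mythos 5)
Your proposal follows the paper's proof essentially verbatim: pass via Lemma \ref{hyp} to the exceptional divisor $\widetilde{E}\subset\mathbb{P}^{n-h}\times\mathbb{P}^{h}$ of bidegree $(d-2,2)$, note that the discriminant of the quadric bundle $\widetilde{E}\rightarrow\mathbb{P}^{n-h}$ has odd degree $(h+1)(d-2)$, and conclude by \cite[Theorem 1.7]{Mas22} together with Proposition \ref{Enr}. Your additional bookkeeping (the parity hypothesis collapses case (i) to $d=3$, handled by Lemma \ref{1a}, and case (ii) to $d\in\{3,5\}$) is correct but superfluous once \cite{Mas22} is invoked; just be aware that your sketched ``odd-discriminant gain'' for the bidegree $(3,2)$ case is not a proof as written, and the operative justification in both your write-up and the paper is the citation of \cite[Theorem 1.7]{Mas22}.
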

\begin{proof}
In this case the exceptional divisor $\widetilde{E}\subset\widetilde{X}_d$ is a general divisor of bidegree $(d-2,2)$ in $\mathbb{P}^{n-h}\times\mathbb{P}^{h}$. Note that the discriminant of the quadric bundle $\widetilde{E}\rightarrow \mathbb{P}^{n-h}$ has degree $(h+1)(d-2)$. Hence the claim follows from \cite[Theorem 1.7]{Mas22} and Propositions \ref{Enr}.  
\end{proof}

\bibliographystyle{amsalpha}
\bibliography{Biblio}

\end{document}